
\documentclass[UTF-8,reqno]{amsart}
\usepackage{enumerate}
\setlength{\topmargin}{-0.3cm}
\setlength{\oddsidemargin}{0.5cm}\setlength{\evensidemargin}{0.5cm}
\setlength{\textwidth}{15truecm}
\setlength{\textheight}{22truecm}
\usepackage{amssymb,url,color, booktabs}
\usepackage{mathrsfs}

\usepackage{color}
\usepackage[colorlinks=true]{hyperref}
\hypersetup{
    linkcolor=blue,          
    citecolor=red,        
    filecolor=blue,      
    urlcolor=cyan
}


\numberwithin{equation}{section}

\newcommand{\be}{\begin{eqnarray}}
\newcommand{\ee}{\end{eqnarray}}
\newcommand{\ce}{\begin{eqnarray*}}
\newcommand{\de}{\end{eqnarray*}}
\newtheorem{theorem}{Theorem}[section]
\newtheorem{lemma}[theorem]{Lemma}
\newtheorem{remark}[theorem]{Remark}
\newtheorem{definition}[theorem]{Definition}
\newtheorem{proposition}[theorem]{Proposition}
\newtheorem{Examples}[theorem]{Example}
\newtheorem{corollary}[theorem]{Corollary}

\def\N{{\mathbb N}}
\def\Z{{\mathbb Z}}
\def\R{{\mathbb R}}

\def\mbP{{\mathbb P}}
\def\mbE{{\mathbb E}}
\def\mbF{{\mathbb F}}

\def\1{{\mathbbm 1}}
\def\mcA{{\mathcal A}}
\def\mcB{{\mathcal B}}
\def\mcD{{\mathcal D}}
\def\mcF{{\mathcal F}}
\def\mcH{{\mathcal H}}
\def\mcL{{\mathcal L}}
\def\mcM{{\mathcal M}}
\def\mcO{{\mathcal O}}
\def\mcP{{\mathcal P}}

\def\mcX{{\mathcal X}}

\def\mfB{{\mathfrak B}}
\def\mfJ{{\mathfrak J}}
\def\mfI{{\mathfrak I}}
\def\mfL{{\mathfrak L}}

\def\msA{{\mathscr A}}
\def\msF{{\mathscr F}}
\def\msI{{\mathscr I}}

\def\msL{{\mathscr L}}
\def\msP{{\mathscr P}}
\def\var{\varepsilon}
\def\veps{\varepsilon}
\def\d{{\rm d}}
\def\e{{\rm e}}

\allowdisplaybreaks
\numberwithin{equation}{section}




\title[Averaging principle]
{Averaging principle and normal deviation for multi-scale SDEs with polynomial nonlinearity}

\author{Mengyu Cheng}
\address{M. Cheng: School of Mathematics and Statistics,
Beijing Institute of Technology, Bejing 100081, P. R. China}
\email{mengyu.cheng@hotmail.com; mcheng@bit.edu.cn}

\author{Zhenxin Liu}
\address{Z. Liu:
School of Mathematical Sciences,
Dalian University of Technology, Dalian 116024, P. R. China}
\email{zxliu@dlut.edu.cn}

\author{Michael R\"ockner}
\address{M. R\"ockner: Fakultat f\"ur Mathematik,
Universit\"at Bielefeld, D-33501 Bielefeld, Germany}
\email{roeckner@math.uni-bielefeld.de}

\subjclass[2010]{34C29, 34F05, 60F05, 37C55}

\keywords{
Multi-scale dynamical system;
Averaging principle; Normal deviation; Quasi-periodic solutions; Pullback attractors.}

\begin{document}

\begin{abstract}
We investigate three types of averaging principles and the normal deviation for
multi-scale stochastic differential equations (in short, SDEs) with polynomial nonlinearity.
More specifically, we first demonstrate the strong convergence of the solution of SDEs,
which involves highly oscillating components and fast processes, to that of the averaged equation.
Then we investigate the small fluctuations of the system around its average,
and show that the normalized difference weakly converges to an Ornstein-Uhlenbeck
type process, which can be viewed as a functional central limit theorem.
Additionally, we show that the attractor of the original system tends to that of the averaged
equation in probability measure space as the time scale $\veps$ goes to zero.
Finally, we establish the second Bogolyubov theorem; that is to say, we prove that
there exists a quasi-periodic solution in a neighborhood of the stationary solution of
the averaged equation when the $\veps$ is small.
\end{abstract}

\maketitle

\tableofcontents

\section{Introduction}

Consider the following periodically forced Van der Pol's equation:
\begin{equation}\label{vdPeq}
y''+\mu(y^2-1)y'+y=a\sin\left(2\pi\nu t\right),
\end{equation}
where $\mu\gg1$, $\nu$ represents the frequency and $a$ the amplitude of the forcing.
Let $t=\mu\tau$ and
\[
x=\frac{1}{\mu^2}\frac{\d y}{\d\tau}+\frac{y^3}{3}-3,
\]
which is also called the Li\'enard transformation.
Define $\veps=\frac{1}{\mu^2}$. Hence we can transform \eqref{vdPeq} into the following system
\begin{equation}\label{sfODE}
\left\{
   \begin{aligned}
   &\ \frac{\d x}{\d \tau}=a\sin\left(2\pi\nu\frac{1}{\sqrt{\veps}}\tau\right)-y\\
   &\ \frac{\d y}{\d \tau}=\frac{1}{\veps}\left(x-\frac{y^3}{3}+y\right).
   \end{aligned}
   \right.
\end{equation}

Let $\theta=\frac{1}{\sqrt{\veps}}\tau$.
Then note that \eqref{sfODE} is a multi-scale system,
including the slow variable $x$, the fast variable $y$ and
the highly oscillating time component $\theta$.
Van der Pol's equation is one of the most important examples of multi-scale systems.
It exhibits a wide variety of interesting dynamical phenomena and appears frequently in applications in various fields, including, but not limited to, neuroscience, seismology, electrical circuits, networks, and systems biology.
Multi-scale models appear frequently in many real-world dynamical systems,
such as climate weather interactions (see e.g. \cite{Kifer1999, MTV2001}),
macro-molecules (see e.g. \cite{BKPR2006, KK2013}),
stochastic volatility in finance (see e.g. \cite{FFK2012}), etc.

Usually, studying multi-scale models is relatively difficult because of the presence of widely separated
times scales and the interactions between them.
To understand the dynamics of multi-scale models, it is desirable to seek a simplified system,
which can simulate and predict the evolution of the original system over a long time scale.
This is the basic idea of the averaging principle.

The averaging principle was first developed for deterministic systems by Krylov,
Bogolyubov and Miltropolsky \cite{BM1961, KB1943},
and extended to SDEs by Khasminskii \cite{Khas1968}.
After that, numerous studies have been carried out on the averaging principle for SDEs, see e.g. \cite{BK2004, FW2006, HL2020, KY2004, LRSX2020, MSV1991, RX2021, Vere1990, XDX2012}
and the references therein. Furthermore, similar results concerning stochastic partial differential equations can be found in references like
\cite{CerraAAP2009, CF2009, CL2023, CLR2023, DSXZ2018, DW2014, Gao2019, HLL2021, WR2012}.
Despite considerable advances in the averaging principle, it seems that there
is no work on multi-scale SDEs which includes the slow variable, the fast variable and
the highly oscillating time component.

Building upon the motivations mentioned above, in this paper
we investigate the averaging principle of the following multi-scale SDEs
with polynomial nonlinearity:
\begin{equation}\label{sfmain}
 \left\{
   \begin{aligned}
   &\ \d X^{\veps}_t=f(\veps^{-\gamma}t,X^{\veps}_t,Y^{\veps}_t)\d t+\sigma(\veps^{-\gamma}t,X^{\veps}_t)\d W^{1}_t\\
   &\ \d Y^{\veps}_t=(\veps^{-2\alpha}B(X^{\veps}_t,Y^{\veps}_t)
   +\veps^{-\beta}b(X^{\veps}_t,Y^{\veps}_t))\d t
   +\veps^{-\alpha}g(X^{\veps}_t,Y^{\veps}_t)\d W^{2}_t,
   \end{aligned}
   \right.
\end{equation}
where $0\leq\beta<2\alpha$, $0<\gamma<2\alpha$,
$f:\R^{1+d_1+d_2}\rightarrow \R^{d_1}$, $\sigma:\R^{1+d_1}\rightarrow\R^{d_1}\otimes\R^{d_1}$,
$B:\R^{d_1+d_2}\rightarrow\R^{d_2}$, $b:\R^{d_1+d_2}\rightarrow \R^{d_2}$,
$g:\R^{d_1+d_2}\rightarrow\R^{d_2}\otimes\R^{d_2}$,
and $0<\veps\ll1$ is a small parameter.
Here $W^1$ and $W^2$ are independent standard Brownian motions.
See Section \ref{Rstate} for detailed conditions for coefficients
$f$, $\sigma$, $B$, $b$ and $g$.
If $f$ and $\sigma$ are time-independent, and $b\equiv0$ then
\eqref{sfmain} reduces to the classical slow-fast SDEs.

More precisely, as the time scale $\veps$ goes to zero
we first consider the so-called {\em first Bogolyubov theorem},
which focuses on the strong convergence of the solution of \eqref{sfmain} to
that of the following averaged equation on finite time intervals:
\begin{equation}\label{avemain}
\d\bar{X}_t=\bar{f}(\bar{X}_t)\d t+\bar{\sigma}(\bar{X}_t)\d W_t^1,
\end{equation}
where
\begin{equation*}
\bar{f}(x)=\lim_{T\rightarrow\infty}\frac{1}{T}\int_t^{t+T}\int_{\R^{d_2}}f(s,x,y)\mu^x(\d y)\d s,~ \lim_{T\rightarrow\infty}\frac{1}{T}\int_t^{t+T}|\sigma(s,x)-\bar{\sigma}(x)|_{HS}^2\d s=0
\end{equation*}
for all $(t,x)\in\R^{1+d_1}$ (see more details about the assumptions in Section 2.1),
where $|\cdot|_{HS}$ is the Hilbert-Schmidt norm; see Theorem \ref{FAPth}.
Here $\mu^x$ is the invariant measure of
\begin{equation}\label{Feq}
 \d Y_t^x=B(x,Y_t^x)\d t+g(x,Y_t^x)\d W_t.
\end{equation}
Secondly, we consider the case where the coefficients $f$ and $\sigma$ are time-independent.
In this case, we assume $2\alpha=1$ for simplicity.
Then by using regularity estimates for the solutions
to Poisson equations, we obtain the optimal strong convergence rate, i.e.
\begin{equation}\label{Ineq01}
\mbE\left(\sup_{t\in[0,T]}|X_t^\veps-\bar{X}_t|^2\right)\leq C_T\veps,
\end{equation}
where $C_T$ is a constant (see Theorem \ref{Opcrth}).
Furthermore, if $\sigma$ is constant, we study the {\em normal deviation}. In other words,
we prove that the normalized difference
\[
Z_t^\veps:=\frac{X_t^\veps-\bar{X}_t}{\sqrt{\veps}}
\]
weakly converges to $\bar{Z}_t$ as $\veps$ goes to zero. Here $\bar{Z}_t$ is the solution to
\begin{equation}\label{NDLeq}
\d \bar{Z}_t=\nabla\bar{f}(\bar{X}_t)\bar{Z}_t\d t
+G(\bar{X}_t)\d \widetilde{W}_t^1,
~\bar{Z}_0=0\in\R^{d_1},
\end{equation}
where
\begin{equation*}
G(x)=\sqrt{\int_0^\infty\int_{\R^{d_2}}\mbE\left[f(x,Y_t^x(y))-\bar{f}(x)\right]
\left[f(x,y)-\bar{f}(x)\right]^T\mu^x(\d y)\d t},
\end{equation*}
and $\widetilde{W}_t^1$ is another standard Brownian motion that is independent of $W_t^1$
(see Theorem \ref{NDth}). Here $Y_t^x(y),t\geq0$ is the solution to
\eqref{Feq} with $Y_0^x=y$.
Such a result is also known as the Gaussian approximation.
In addition, our investigation includes the study of the {\em global averaging principle}
in the weak sense, i.e.
we prove that the measure attractor of \eqref{sfmain} converges,
as $\veps$ goes to zero, to that of \eqref{avemain} (see Theorem \ref{GAPth}).
Finally, we establish the {\em second Bogolyubov theorem}, which states
that the stationary solution of \eqref{avemain}
approximates the recurrent solution of \eqref{sfmain} in the sense of \eqref{SMSeq}
in Theorem \ref{SAPth}.

Compared with \cite{LRSX2020}, where they proved the fist Bogolyubov theorem
for two time scale SDEs with locally Lipschitz coefficients, we study a broader class of SDEs
\eqref{sfmain}. The slow process $X_t^\veps$ here interacts not only with
the fast process $Y_t^\veps$ but also with the highly oscillating time component $\veps^{-\gamma}t$.
To overcome the difficulty, we employ the Poisson equation,
the technique of time discretization and the technique of truncation.
And we also obtain the optimal rate of strong convergence
when the coefficients of the slow equation are time-independent
and $f$ satisfies the following locally monotone condition: for all $x_1,x_2\in\R^{d_1}$ and $y\in\R^{d_2}$
\begin{equation}\label{Inmeq}
\langle f(x_1,y)-f(x_2,y),x_1-x_2\rangle\leq M\left(1+|y|^{\theta_2}\right)|x_1-x_2|^2,
\end{equation}
where $M\geq0$ and $\theta_2\geq1$; see Section \ref{ND} for more detailed conditions.

The rate of convergence is interesting in its own right since it plays a crucial role in
constructing efficient numerical schemes.
The main motivation comes from the well-known Heterogeneous Multi-Scale
Methods used to approximate the slow component; see e.g. \cite{ELV2005, KT2004}.
Recall that the optimal strong convergence order is also obtained in \cite{SSWX2022} for monotone SDEs.
However, it should be noted that our result cannot be covered by those in \cite{SSWX2022} because there it is assumed that
the coefficient $f$ must be monotone uniformly with respect to (in short, w.r.t.) $y$.
There are certain classes of systems, such as $f(x,y)=x-x^3+y^k\sin x, ~k\in\N$, that do not satisfy monotonicity uniformly w.r.t. $y$
but satisfy condition \eqref{Inmeq}; see Example \ref{Ex1}.

In order to obtain the optimal strong convergence order under monotonicity conditions,
we need to estimate two crucial terms:
\begin{align*}
\msI_1
&
:=\mbE\left(\sup_{t\in[0,T]}\int_0^t\langle f(X_s^\veps,Y_s^\veps)-\bar{f}(X_s^\veps),X_s^\veps-\bar{X}_s\rangle\d s\right),
\end{align*}
\begin{align*}
\msI_2
&
:=\mbE\left(\sup_{t\in[0,T]}\int_0^t\langle\bar{f}(X_s^\veps)-\bar{f}(\bar{X}_s),X_s^\veps-\bar{X}_s\rangle\d s\right).
\end{align*}
For $\msI_1$, regularity estimates for solutions to the Poisson equation can be employed to handle its estimation.
The remaining part of the proof is to show that $\bar{f}$ is monotone
when $f$ satisfies the locally monotone condition \eqref{Inmeq}.
Actually, thanks to the stability of the stationary solution to \eqref{Feq},
we can complete the proof (see Lemma \ref{AFMClem} for more details).

The first Bogolyubov theorem can be viewed as a functional law of large numbers,
indicating the convergence of the slow process $X_t^\veps$ to the averaged process $\bar{X}_t$.
However, it is crucial to acknowledge that even for small positive values of $\veps$,
the slow process $X_t^\veps$ still experiences fluctuations around the averaged process $\bar{X}_t$.
Consequently, it is natural to go one step further and consider
the functional central limit theorem, i.e. the normal deviation.
By studying these deviations, we can contribute to the understanding of
the behavior of the system and its relationship with the averaged process.
The fundamental paper about the normal deviation of multi-scale SDEs is by Khasminskii \cite{Khas1966}.
Since then, further developments were acquired;
see e.g. \cite{CerraiJMPA2009, HL2018, KV2014, KT2004, Kons2014, PTW2012, RX2021, WR2012}.

To the best of our knowledge, it seems that there is no work on the normal deviation
for SDEs with polynomial nonlinearity. Therefore,
we focus on investigating the deviations
of the solutions $X_t^\veps$ to monotone SDEs of type \eqref{sfmain} from $\bar{X}_t$ in this paper.
Specifically, we establish that, under appropriate conditions, the deviation process
$Z_t^\veps$ converges weakly to an Ornstein-Uhlenbeck type process $\bar{Z}_t$.
Such a result is closely related to the homogenization for solutions of
partial differential equations with singularly perturbed terms; see e.g. \cite{FW1979}.

More specifically, we prove that for any $\varphi\in C_b^\infty(\R^{d_1})$
\begin{equation}\label{InNDeq}
\lim_{\veps\rightarrow0}\sup_{t\in[0,T]}\left|\mbE\varphi(Z_t^\veps)-\mbE\varphi(\bar{Z}_t)\right|=0,
\end{equation}
where $C_b^\infty(\R^{d_1})$ is the space of all smooth functions
with bounded $j$-th derivatives for all integers $j\in[0,\infty)$.
To this end, employing It\^o's formula, we have
\begin{align*}
&
\left|\mbE\varphi(Z_t^\veps)-\mbE\varphi(\bar{Z}_t)\right|\\\nonumber
&
\leq
\left|\mbE\int_{0}^t\frac{1}{\sqrt{\veps}}\langle f(X_s^\veps,Y_s^\veps)-\bar{f}(X_s^\veps),\nabla\varphi(Z_s^\veps)\rangle-\frac12 Tr[\nabla^2\varphi(Z_s^\veps)GG^T(X_s^\veps)]\d s\right|\\\nonumber
&\quad
+\left|\mbE\int_0^t\langle \nabla\bar{f}(\bar{X}_s+\iota(X_s^\veps-\bar{X}_s))Z_s^\veps,\nabla\varphi(Z_s^\veps)\rangle
-\langle\nabla\bar{f}(\bar{X}_s)\bar{Z}_s,\nabla\varphi(\bar{Z}_s)\rangle\d s\right|\\\nonumber
&\quad
+\left|\mbE\int_0^t\frac12 Tr[\nabla^2\varphi(Z_s^\veps)GG^T(X_s^\veps)]
-\frac12 Tr[\nabla^2\varphi(\bar{Z}_s)GG^T(\bar{X}_s)]\d s\right|=:\msI_1+\msI_2+\msI_3,
\end{align*}
where $\iota\in[0,1]$. Therefore, we just need to show that
$\msI_1$, $\msI_2$ and $\msI_3$ go to zero as $\veps\rightarrow0$.
Combining the regularity estimates of the solutions to the Poisson equation
and the optimal strong convergence \eqref{Ineq01}, we prove that $\lim_{\veps\rightarrow0}\msI_1=0$.
For $\msI_2$ and $\msI_3$, we first prove that the subset
$$\{Z^\veps,\bar{Z}:0<\veps\leq1\}\subset C([0,T];\R^{d_1})$$
is tight; see Lemma \ref{TightLem1}. By utilizing the tightness of the set and
the separation properties of $C([0,T];\R^{d_1})$,
we can then conclude that $\lim_{\veps\rightarrow0}\msI_2=0$ and $\lim_{\veps\rightarrow0}\msI_3=0$,
and complete the proof of \eqref{InNDeq}; see Section \ref{PNDth} for details.
This provides a better approximation and is also known as
Van Kampen's scheme in physics; see e.g. \cite{Arnold2001}.

Another main ingredient of this paper is to study the long-time
asymptotic behavior of solutions to \eqref{sfmain}.
So, we aim to establish the global averaging principle in the weak sense.
Namely, we prove that the attractor of \eqref{sfmain} tends to that of \eqref{avemain}
in the space of probability measures. The global averaging principle of deterministic systems
was proved in \cite{HV1990, Ilyin1996, Ilyin1998, Zelik2006} and the references therein.
There are few works on the global averaging principle for stochastic systems.
In \cite{CL2023, CLR2023} the global averaging principle was established
in the weak sense for stochastic partial differential equations with highly time oscillating
components.

As we mentioned before, the drift coefficient $f$ in \eqref{sfmain} exhibits a complex and general structure, and it encompasses not only
the highly time oscillating component $\veps^{-\gamma} t$, but also the fast variable
$Y_t^\veps$.
Due to the coupling between the slow process $X_t^\veps$ and the fast process $Y_t^\veps$
in \eqref{sfmain}, it is necessary to consider the attractor of the entire multi-scale system
as a unified entity. The dynamics of the slow process and the fast process are interconnected, and their mutual influence plays a role in shaping the behavior of the system as a whole.
Therefore, we consider the attractor of the whole multi-scale system \eqref{sfmain} instead of
the single slow equation.

More exactly, fix $0<\veps\leq1$.
Define the transition probability
\[
P^\veps(s,(x,y),t,D):=\mbP\circ\left[\left(X_{s,t}^\veps(x),Y_{s,t}^\veps(y)\right)\right]^{-1}(D)
\]
for all $s\leq t$, $(x,y)\in\R^{d_1+d_2}$ and $D\in\mcB(\R^{d_1+d_2})$,
where $\mcB(\R^{d_1+d_2})$ is the Borel $\sigma$-algebra of $\R^{d_1+d_2}$.
Then for any $0<\veps\leq1$ it associates the Markov operator $P^*_\veps$
acting on the probability measure space $\msP(\R^{d_1+d_2})$:
\begin{equation}\label{InTP}
P_\veps^*(s,t,m)(D):=\int_{\R^{d_1+d_2}}P^\veps(s,(x,y),t,D)m(\d(x,y))
\end{equation}
for any $m\in\msP(\R^{d_1+d_2})$ and $D\in\mcB(\R^{d_1+d_2})$.

Note that $P_\veps^*$ is time inhomogeneous, so we employ the method of skew
product to consider its pullback attractors.
For detailed definitions of skew product flows, attractors, and pullback attractors, please see Section \ref{PDS}.
Finally, we investigate the convergence of pullback measure attractors
for \eqref{sfmain}.

Finally, we establish the second Bogolyubov theorem for \eqref{sfmain}.
Since numerous physical models have periodic forces, such as \eqref{vdPeq},
we approximate the periodic solution of the original system
by utilizing the stationary solution to the averaged equation.
It is worth noting that we obtain the convergence of a broader class of recurrent solutions,
including periodic, quasi-periodic, almost periodic solutions among others; see Remark \ref{MRS}.
For brevity, we focus on the analysis of quasi-periodic solutions in this paper.
To be specific, under some suitable conditions we show that there
exists a unique solution to \eqref{sfmain}, which is
quasi-periodic in distribution,
if $f$ and $\sigma$ are quasi-periodic. Then the law of the slow component of
the quasi-periodic solution converges to the law of the stationary solution for \eqref{avemain}
uniformly w.r.t. $t\in\R$ as the time scale $\veps$ goes to zero.

Now we summarize the structure of the paper. In Section 2, we state our main results.
In Section 3, we study the frozen equation and the Poisson equation.
In Section 4, we investigate the first Bogolyubov theorem.
In Section 5, we first prove the optimal strong convergence order.
Then we establish the normal deviation. In Section 6, we prove the global averaging principle
in the weak sense and the second Bogolyubov theorem.
In the appendix at the end, we show the existence and uniqueness of solutions to \eqref{sfmain}.

~\\

{\bf Notations.}
Throughout this paper, let $|\cdot|$ be the Euclidean norm
and $\langle\cdot,\cdot\rangle$ be the Euclidean inner product on $\R^d, d\in\N$.
For a vector-valued or matrix-valued function $x\mapsto\varphi(x)$ defined on $\R^d$ or $(x,y)\mapsto\varphi(x,y)$
defined on $\R^{d_1+d_2} ,d_1,d_2\in\N$, we denote the $i$-th order derivative of $\varphi$ by $\nabla^i \varphi(x)$, and the $i$-th and $j$-th order partial derivative of $\varphi(x,y)$ w.r.t. $x$ and $y$ by $\partial_x^i \partial_y^j \varphi(x,y)$, respectively,
where $i,j\in\N$.
For all $i\in\N$,
let $C^{i,2,4}(\R^{d+d_1+d_2})$ be the space of all continuous mappings
$f:\R^{d+d_1+d_2}\rightarrow\R^{d_1+d_2}$
such that $\partial_h^{i'} f$ and $\partial_{x}^{j'}\partial_{y}^{k'}f$  are continuous for any
$0\leq i'\leq i$ and $0\leq 2j'+k'\leq4$.
Let $[C]$ denote the integer part of $C$ for any $C\geq0$.
We use $C_b^k(\R^{d_1})$ to denote the space of all functions $f:\R^{d_1}\rightarrow\R$ whose $j$-th derivative is continuous and bounded for all $j\in[0,k]$.
For any complete metric space $(\mcX,d)$, let $C(\R,\mcX)$ be the space of all continuous mappings $\varphi:\R\rightarrow\mcX$ with the compact-open topology.
Define the Hausdorff semi-metric
${\rm dist}_{\mcX}(A,B):=\sup_{x\in A}\inf_{y\in B}d(x,y)$ for any $A,B\subset \mcX$.
Let $A^T$ denote the transpose of a matrix $A$.
Let $\msL(X)$ denote the distribution or law of random variable $X$,
and $(\Omega,\msF,\mbP)$ be a complete probability space.
We use $C$ with or without subscripts to denote some constant, which may change from line to line. In this paper, solutions to SDEs are always meant to be strong solutions.

\section{Statement of the main results}\label{Rstate}
In this section, we formulate our main results.

\subsection{The first Bogolyubov theorem}
First of all, we introduce the following conditions about the coefficients $B$, $b$, and $g$:
\begin{enumerate}
  \item [{\bf(H$_y^1$)}] There exist constants $\eta>0$, $\eta'\geq0$, $\theta\geq2$, $\widetilde{\eta}\in\R$ and $K_1\in\R$
  such that for all $(x,y)\in\R^{d_1+d_2}$
  \[
  2\langle B(x,y),y\rangle+|g(x,y)|^2_{HS}\leq -\eta|y|^2-\eta'|y|^{\theta}+K_1,\quad
  2\langle b(x,y),y\rangle\leq \widetilde{\eta}|y|^2+K_1.
  \]
\end{enumerate}
\begin{enumerate}
  \item [{\bf(H$_y^2$)}] There exists a constant $K_2>0$ such that for all $(x,y)\in\R^{d_1+d_2}$
  \[
  K_2^{-1}I\leq a(x,y)\leq K_2I,
  \]
  where $a(x,y)=\frac12gg^T(x,y)$.
  \item [{\bf(H$_y^3$)}]
\begin{itemize}
\item[(i)] There exist constants $\eta>0$ and $\theta\geq2$ such that
for all $x\in\R^{d_1}$,$y_1,y_2\in\R^{d_2}$
\begin{align*}
2\langle B(x,y_1)-B(x,y_2),y_1-y_2\rangle
+|g(x,y_1)-g(x,y_2)|_{HS}^2
\leq-\eta|y_1-y_2|^2.
\end{align*}
\item[(ii)] There exists a constant $L_g>0$ such that for all $(x_1,y_1),(x_2,y_2)\in\R^{d_1+d_2}$
$$|g(x_1,y_1)-g(x_2,y_2)|_{HS}\leq L_g(|x_1-x_2|+|y_1-y_2|).$$
\item[(iii)] There exist constants $\kappa_1,\kappa_2\geq 1$ and $K_3>0$ such that
for all $x_1,x_2\in\R^{d_1}$ and $y\in\R^{d_2}$
\begin{equation*}
|B(x_1,y)-B(x_2,y)|\leq K_3\left(1+|y|^{\kappa_2}\right)|x_1-x_2|,
\end{equation*}
\begin{align*}
&
|b(x_1,y_1)-b(x_2,y_2)|\\
&
\leq K_3\left(1+|x_1|^{\kappa_1}+|x_2|^{\kappa_1}+|y_1|^{\kappa_2}+|y_2|^{\kappa_2}\right)
\left(|x_1-x_2|+|y_1-y_2|\right).
\end{align*}
\end{itemize}
\item[{\bf(H$_y^4$)}]  There exist constants $\varsigma>0$ and $\varsigma_1>8$
    such that for all $(x,y)\in\R^{d_1+d_2}$ and $\xi\in\R^{d_2}$
    \begin{align*}
    2\langle\partial_y B(x,y)\xi,\xi\rangle+(\varsigma_1-1)|\partial_y g(x,y)\xi|^2
        \leq -\varsigma|\xi|^2.
    \end{align*}

\item [{\bf(H$_y^5$)}] There exist $K_3>0$ and $\kappa_1\geq1$ such that
$B\in C^{3,3}(\R^{d_1+d_2})$ and $g\in C^{3,3}(\R^{d_1+d_2})$ satisfy
\[
\sum_{1\leq i+j\leq3}\left(|\partial_y^j\partial_x^i B|+|\partial_y^j\partial_x^i g|\right)
\leq K_3\left(1+|y|^\kappa\right).
\]
\end{enumerate}

\begin{remark}\rm
Note that {\bf{(H$_y^4$)}} and {\bf{(H$_y^5$)}} are not necessarily required
if we study the averaging principle by the technique of time discretization.
However, in this context, we aim to investigate the optimal strong convergence rate for the
averaging principle based on the Poisson equation,
and we need {\bf{(H$_y^4$)}}--{\bf{(H$_y^5$)}} to obtain the
well-posedness of the Poisson equation (see e.g. \cite{CDGOS2022,SSWX2022}).
\end{remark}

Note that if {\bf(H$_y^1$)} and {\bf(H$_y^2$)} hold, then for any $x\in\R^{d_1}$
\begin{equation}\label{fmain}
\d Y_t^x=B(x,Y_t^x)\d t+g(x,Y_t^x)\d W^2_t
\end{equation}
admits a unique invariant measure $\mu^x$; see e.g. \cite{PV2003}.
Set
$$
\hat{f}(t,x):=\int_{\R^{d_2}}f(t,x,y)\mu^x(\d y),
\quad \forall (t,x,y)\in\R^{1+d_1+d_2}.
$$
Next, we introduce the conditions concerning the coefficients $f$ and $\sigma$:
\begin{enumerate}
  \item[{\bf(A$_f$)}] Let $R\in\R_+$. There exist $\omega_R^f:\R\rightarrow\R_+$ satisfying $\omega_R^f(T)\rightarrow0$ as $T\rightarrow\infty$
      and $\bar{f}:\R^{d_1}\rightarrow\R^{d_1}$
  such that for all $t\in\R$ and $|x|\leq R$
  \[
  \frac1T\left|\int_t^{t+T}(\hat{f}(s,x)-\bar{f}(x))\d s\right|\leq\omega_R^f(T).
  \]
  \item[{\bf(A$_\sigma$)}] There exists
$\omega^\sigma:\R\rightarrow\R_+$ satisfying $\omega^\sigma(T)\rightarrow0$ as $T\rightarrow\infty$
and $\bar{\sigma}:\R^{d_1}\rightarrow\R^{d_1}\otimes\R^{d_1}$
  such that for all $(t,x)\in\R^{1+d_1}$
  \[
  \frac1T\int_t^{t+T}|\sigma(s,x)-\bar{\sigma}(x)|_{HS}^2\d s\leq\omega^\sigma(T)(1+|x|^2).
  \]
\end{enumerate}
\begin{enumerate}
\item [{\bf(H$_x^1$)}] There exist constants $K_4,K_5\in\R$ such that for all $(t,x,y)\in\R^{1+d_1+d_2}$
  \[
  2\langle f(t,x,y),x\rangle+|\sigma(t,x)|^2\leq K_4(1+|x|^2)+K_5|y|^{\theta},
  \]
where $\theta$ is as in {\bf(H$_y^1$)}.
\item  [{\bf(H$_x^2$)}] There exist constants $K_6>0$, $\theta_1,\theta_2>1$ such that for all
  $(t,x,y)\in\R^{1+d_1+d_2}$
  \begin{align*}
  &
|\partial_tf(t,x,y)|+\sum_{0\leq 2i+j\leq4}|\partial_y^j\partial_x^if(t,x,y)|
  \leq K_6(1+|x|^{\theta_1}+|y|^{\theta_2}).
  \end{align*}
\item [{\bf(H$_x^3$)}] There exist constants $K_7,L_\sigma>0$
  such that for all $t\in\R$ and
  $x_1,x_2\in\R^{d_1}$
  \[
  |\sigma(t,x_1)-\sigma(t,x_2)|_{HS}\leq L_\sigma|x_1-x_2|,\quad |\sigma(t,0)|\leq K_7.
  \]
\end{enumerate}


\begin{remark}\label{Rem01}\rm
(i) If $\theta=2$ or $K_5=0$ in {\bf(H$_x^1$)}, then we can assume that $\eta'=0$ in {\bf(H$_y^1$)}.

(ii) Note that {\bf(H$_x^2$)} implies that there exists a constant $C>0$, depending only on $K_6,\theta_1,\theta_2$,
such that for all $t\in\R$, $x_1,x_2\in\R^{d_1}$ and $y_1,y_2\in\R^{d_2}$
 \begin{align*}
 &
  |f(t,x_1,y_1)-f(t,x_2,y_2)|\\
  &
  \leq C\left(1+|x_1|^{\theta_1}+|x_2|^{\theta_1}+|y_1|^{\theta_2}+|y_2|^{\theta_2}\right)
  \left(|x_1-x_2|+|y_1-y_2|\right).
  \end{align*}

(iii) If $f$ and $\sigma$ satisfy {\bf(A$_f$)} and {\bf(A$_\sigma$)} respectively then
for all $t\in\R$ and $x\in\R^{d_1}$
$$\bar{f}(x)=\lim\limits_{T\rightarrow\infty}\frac{1}{T}\int_t^{t+T}\hat{f}(s,x)\d s,\quad
\bar{\sigma}(x)=\lim\limits_{T\rightarrow\infty}\frac{1}{T}\int_t^{t+T}\sigma(s,x)\d s.
$$

(iv) Assume that {\bf(H$_y^1$)}, {\bf(H$_y^3$)} and {\bf(H$_x^1$)}--{\bf(H$_x^3$)} hold.
Let $0<\veps\leq \sqrt[2\alpha]{\eta'/K_5}\wedge1$.
Then for
any $(x,y)\in\R^{d_1+d_2}$ there exists a unique solution
$(X_t^\veps(x),Y_t^\veps(y))$ to \eqref{sfmain} satisfying
$(X_0^\veps(x),Y_0^\veps(y))=(x,y)$; see Lemma \ref{EUSFSDEth} for details.
Moreover, if $g\in C_b(\R^{d_1+d_2})$ then for any $x\in\R^{d_1}$ there exists a unique solution $\bar{X}_t(x)$ to \eqref{avemain} with $\bar{X}_0(x)=x$; see
Remark \ref{EUASDE}.

(v) In this paper, we focus on the asymptotic dynamics of the multi-scale system \eqref{sfmain} when $\veps$ goes to zero. There exists $\veps_0>0$ such that our results in this paper hold for any $0<\veps\leq \veps_0$. Therefore, we state our results for all $0<\veps\leq1$ in this section for brevity.
\end{remark}

Now we establish the first Bogolyubov theorem for \eqref{sfmain}.

\begin{theorem}\label{FAPth}
Assume that {\bf(H$_x^1$)}--{\bf(H$_x^3$)}, {\bf(H$_y^1$)}--{\bf(H$_y^5$)}, {\bf(A$_f$)} and {\bf(A$_\sigma$)} hold.
Then we have
\begin{equation*}\label{FAPtheq}
\lim_{\var\rightarrow0}\sup_{t\in[0,T]}\mbE|X_t^\var(x)-\bar{X}_t(x)|^2=0.
\end{equation*}
\end{theorem}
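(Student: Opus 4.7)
The proof plan is to combine Itô's formula for $|X^\veps_t-\bar X_t|^2$ with a Khasminskii-type time discretization and a truncation argument to handle the polynomial growth of the coefficients. After taking expectations, the Itô identity produces two non-trivial contributions beyond the standard Lipschitz/Gronwall term: the drift error
\[
\msI^\veps_t:=\mbE\int_0^{t}\langle f(\veps^{-\gamma}s,X^\veps_s,Y^\veps_s)-\bar f(\bar X_s),\,X^\veps_s-\bar X_s\rangle\d s
\]
and the analogous diffusion error built from $|\sigma(\veps^{-\gamma}s,X^\veps_s)-\bar\sigma(\bar X_s)|^2_{HS}$. Both must be shown to vanish as $\veps\to 0$, after which Gronwall closes the argument.

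I would first establish uniform-in-$\veps$ moment estimates
\[
\sup_{0<\veps\leq1}\mbE\sup_{t\in[0,T]}|X^\veps_t|^p<\infty,\qquad \sup_{0<\veps\leq1}\sup_{t\in[0,T]}\mbE|Y^\veps_t|^q<\infty
\]
for arbitrarily large $p,q$, using {\bf(H$_x^1$)} and the strong dissipativity {\bf(H$_y^1$)} (the negative term $-\eta|y|^2-\eta'|y|^\theta$ controls the $|y|^\theta$ contribution from {\bf(H$_x^1$)} in the slow equation, while the fast equation itself benefits from $\veps^{-2\alpha}\gg\veps^{-\beta}$). Next I would introduce the stopping time $\tau^\veps_R:=\inf\{t\geq 0:|X^\veps_t|\vee|\bar X_t|\geq R\}$; Markov's inequality then yields $\mbP(\tau^\veps_R\leq T)\leq C/R^2$ uniformly in $\veps$, so the proof reduces to the event $\{\tau^\veps_R>T\}$ on which $f$ and $\bar f$ are effectively Lipschitz with constants depending only on $R$ (via {\bf(H$_x^2$)} and the consequent Lipschitz property of $\bar f$).

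For the key term $\msI^\veps_t$, the strategy is to partition $[0,T]$ with a mesh $\delta=\delta(\veps)$ chosen so that $\delta\to 0$ while $\delta\veps^{-2\alpha}\to\infty$ and $\delta\veps^{-\gamma}\to\infty$ (e.g.\ $\delta=\veps^{\gamma/2}$; the existence of such $\delta$ is guaranteed precisely because $\gamma<2\alpha$). After replacing $X^\veps_s$ by $X^\veps_{k\delta}$ in the argument of $f$ (with error controlled by continuity estimates $\mbE|X^\veps_s-X^\veps_{k\delta}|^2\leq C_R\delta$ on the truncated event), decompose
\[
f(\veps^{-\gamma}s,X^\veps_{k\delta},Y^\veps_s)-\bar f(X^\veps_{k\delta})=\bigl[f(\veps^{-\gamma}s,X^\veps_{k\delta},Y^\veps_s)-\hat f(\veps^{-\gamma}s,X^\veps_{k\delta})\bigr]+\bigl[\hat f(\veps^{-\gamma}s,X^\veps_{k\delta})-\bar f(X^\veps_{k\delta})\bigr].
\]
The second bracket is handled via {\bf(A$_f$)} after the change of variable $u=\veps^{-\gamma}s$, producing a factor $\omega^f_R(\delta\veps^{-\gamma})\to 0$. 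The first bracket is dealt with via the Poisson equation $-\mcL_{X^\veps_{k\delta}}u(t,X^\veps_{k\delta},\cdot)=f(t,X^\veps_{k\delta},\cdot)-\hat f(t,X^\veps_{k\delta})$, whose well-posedness and polynomial regularity estimates are ensured by {\bf(H$_y^4$)}--{\bf(H$_y^5$)}; applying Itô's formula to $\veps^{2\alpha}u(\veps^{-\gamma}s,X^\veps_{k\delta},Y^\veps_s)$ then expresses this bracket as boundary terms of order $\veps^{2\alpha}$ plus a drift correction of order $\veps^{2\alpha-\gamma}$, all of which vanish as $\veps\to 0$ since $\gamma<2\alpha$. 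The diffusion cross term is treated in a parallel way, with {\bf(A$_\sigma$)} playing the role of {\bf(A$_f$)} and the Lipschitz condition {\bf(H$_x^3$)} providing the Gronwall contribution.

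The main obstacle lies in coordinating the three competing scales: the oscillation $\veps^{-\gamma}$, the fast relaxation $\veps^{-2\alpha}$, and the intermediate scale $\veps^{-\beta}$. The discretization length $\delta(\veps)$ must be small compared to the macroscopic time but simultaneously large compared to the natural time scales of both the fast variable and the oscillating argument; that this is possible relies crucially on the hypotheses $\beta<2\alpha$ and $\gamma<2\alpha$. A secondary difficulty is that all auxiliary functions (derivatives of $\bar f$, the Poisson solution $u$ and its derivatives) carry polynomial dependence on $y$ through the exponents $\kappa,\kappa_2,\theta_2$, so the moment bounds of the first step must be chosen with sufficiently large $q$ to absorb these powers uniformly in $\veps$. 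Once $\msI^\veps_t\to 0$ and its diffusion analogue are established, Gronwall's inequality applied to $\mbE|X^\veps_{t\wedge\tau^\veps_R}-\bar X_{t\wedge\tau^\veps_R}|^2$ followed by first sending $\veps\to 0$ and then $R\to\infty$ yields the claim.
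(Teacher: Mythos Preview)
Your plan is sound and uses the same toolkit as the paper (moment bounds, truncation, time discretization, the Poisson equation, assumptions $(A_f)$ and $(A_\sigma)$, then Gronwall), but the organization differs in two respects. First, the paper does not start from It\^o's formula for $|X^\veps-\bar X|^2$; it simply bounds $\mbE|X^\veps_t-\bar X_t|^2\leq 4\mfI_1+4\mfI_2+4\mfI_3$ with $\mfI_1=\mbE\bigl|\int_0^t(f_\veps-\hat f_\veps)\,\d s\bigr|^2$, $\mfI_2=\mbE\bigl|\int_0^t(\hat f_\veps-\bar f)\,\d s\bigr|^2$, and $\mfI_3$ the diffusion term. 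Second, for $\mfI_1$ the paper applies the Poisson equation \emph{globally}: It\^o's formula is used on $\psi(\veps^{-\gamma}t,X^\veps_t,Y^\veps_t)$ with the \emph{actual} slow process in the $x$-slot, exploiting the $\partial_x,\partial_x^2$ regularity of $\psi$ furnished by $(H_y^4)$--$(H_y^5)$ to get $\mfI_1\leq C_T\veps^{\widetilde\alpha}$ with no discretization whatsoever. Truncation and discretization enter only for $\mfI_2$ (to invoke $(A_f)$) and $\mfI_3$ (to invoke $(A_\sigma)$). Your scheme instead discretizes first and applies the Poisson equation on each subinterval with the frozen value $X^\veps_{k\delta}$; this classical Khasminskii layout also works, but you have glossed over one point: when you apply It\^o to $u(\cdot,X^\veps_{k\delta},Y^\veps_s)$, the $Y$-generator that appears involves $B(X^\veps_s,\cdot)$ and $g(X^\veps_s,\cdot)$, not $B(X^\veps_{k\delta},\cdot)$ and $g(X^\veps_{k\delta},\cdot)$, so you must also control the mismatch $[\mfL_2(X^\veps_s,\cdot)-\mfL_2(X^\veps_{k\delta},\cdot)]u$. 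This is routine via $(H_y^3)$(iii), $(H_y^5)$ and the H\"older bound on $X^\veps$, but it is missing from your outline. The paper's arrangement cleanly separates the two averaging mechanisms (fast ergodicity at an explicit rate $\veps^{\widetilde\alpha}$; time oscillation by discretization with no rate), whereas yours merges them at the cost of extra bookkeeping; incidentally, your inner-product decomposition is exactly what the paper uses later for the optimal-rate Theorem~\ref{Opcrth}, where it is indeed the right starting point.
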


\subsection{Normal deviation}\label{ND}

Let $f$ and $\sigma$ be independent of time $t$. Without loss of generality, assume that
$\alpha=\frac12$ . Then we can consider the following system
\begin{equation}\label{sfmain2}
  \left\{
   \begin{aligned}
   &\ \d X^{\veps}_t=f(X^{\veps}_t,Y^{\veps}_t)\d t+\sigma(X^{\veps}_t)\d W^{1}_t\\
   &\ \d Y^{\veps}_t=\left(\frac{1}{\veps}B(X^{\veps}_t,Y^{\veps}_t)
   +\frac{1}{\veps^{\beta}}b(X_t^\veps,Y_t^\veps)\right)\d t
   +\veps^{-\frac12}g(X^{\veps}_t,Y^{\veps}_t)\d W^{2}_t,
   \end{aligned}
   \right.
  \end{equation}
where $\beta<1$.
To obtain the normal deviation, we need the following condition:
\begin{enumerate}
  \item[{\bf(H$_x^4$)}]
  \begin{enumerate}
      \item[(i)]
There exist $M>0$ and $\theta_2>1$ such that for all $x_1,x_2\in\R^{d_1}$  and $y\in\R^{d_2}$
  \[
  \langle f(x_1,y)-f(x_2,y),x_1-x_2\rangle\leq M\left(1+|y|^{\theta_2}\right)|x_1-x_2|^2.
  \]
       \item[(ii)]
There exists $C>0$ and $\theta_2>1$ such that for all $x\in\R^{d_1}$ and $y_1,y_2\in\R^{d_2}$
\[
\left|f(x,y_1)-f(x,y_2)\right|\leq C\left(1+|y_1|^{\theta_2}+|y_2|^{\theta_2}\right)|y_1-y_2|.
\]
  \end{enumerate}
\end{enumerate}

\begin{remark}\label{OpRem}\rm
There is a work on the optimal strong convergence rate
for monotone SDEs in \cite{SSWX2022}.
Our condition {\bf(H$_x^4$)}--(i), however, is more general than the condition \cite[(2.1)]{SSWX2022}.
\end{remark}

The optimal strong convergence rate of the first Bogolyubov theorem for \eqref{sfmain2}
we prove in this paper is contained in the following theorem.
\begin{theorem}\label{Opcrth}
Assume that {\bf(H$_x^1$)}--{\bf(H$_x^4$)} and {\bf(H$_y^1$)}--{\bf(H$_y^5$)} hold.
Then there exists a constant $C_T>0$ such that
\begin{equation}\label{Opcrsfeq}
\mbE\left(\sup_{t\in[0,T]}|X_t^\veps(x)-\bar{X}_t(x)|^{2}\right)\leq C_T\veps.
\end{equation}
\end{theorem}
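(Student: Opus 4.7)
The plan is to apply Itô's formula to $|X_t^\veps - \bar{X}_t|^2$, so that, after the decomposition
\[
f(X_s^\veps, Y_s^\veps) - \bar{f}(\bar{X}_s) = \bigl[f(X_s^\veps, Y_s^\veps) - \bar{f}(X_s^\veps)\bigr] + \bigl[\bar{f}(X_s^\veps) - \bar{f}(\bar{X}_s)\bigr],
\]
the drift integrand tested against $X_s^\veps - \bar{X}_s$ splits into the two integrals $\msI_1$ and $\msI_2$ already identified in the introduction. The Lipschitz bound \textbf{(H$_x^3$)} controls the quadratic-variation term $\int_0^t |\sigma(X_s^\veps) - \sigma(\bar{X}_s)|_{HS}^2 \d s$ by $L_\sigma^2 \int_0^t |X_s^\veps - \bar{X}_s|^2 \d s$, and the stochastic integral is treated by BDG together with Young's inequality.

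For $\msI_2$, the strategy is to prove the cited Lemma \ref{AFMClem}: namely, that the local monotonicity hypothesis \textbf{(H$_x^4$)}(i) upgrades to a genuine one-sided Lipschitz bound
\[
\langle \bar{f}(x_1) - \bar{f}(x_2), x_1 - x_2\rangle \leq L|x_1 - x_2|^2
\]
on the averaged drift. This is achieved by writing $\bar{f}(x) = \mbE[f(x, Y_\infty^x)]$ for a stationary solution of the frozen equation \eqref{Feq} and exploiting the exponential contraction furnished by \textbf{(H$_y^3$)}(i) to integrate out the $1 + |y|^{\theta_2}$ weight appearing in \textbf{(H$_x^4$)}(i). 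One then obtains $\msI_2(t) \leq L \int_0^t |X_s^\veps - \bar{X}_s|^2 \d s$, the standard Gronwall-ready shape.

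For $\msI_1$, I would deploy the Poisson-equation machinery. Let $\Phi(x, y)$ solve $\mcL_0^x \Phi(x, y) = -(f(x, y) - \bar{f}(x))$ componentwise, where $\mcL_0^x$ is the generator of \eqref{Feq}; conditions \textbf{(H$_y^4$)}-\textbf{(H$_y^5$)} together with \textbf{(H$_x^2$)} provide, via the standard theory (cf. \cite{CDGOS2022, SSWX2022}), the regularity and polynomial growth in $y$ of $\Phi$ and its relevant derivatives. Applying Itô's formula to $\Phi(X_t^\veps, Y_t^\veps)$ and using that the full generator of $(X^\veps, Y^\veps)$ contains the fast piece $\veps^{-1} \mcL_0^x$, one rearranges
\[
\int_0^t [f(X_s^\veps, Y_s^\veps) - \bar{f}(X_s^\veps)] \d s = -\veps \bigl[\Phi(X_t^\veps, Y_t^\veps) - \Phi(x, y)\bigr] + \veps \int_0^t \mathcal{R}^\veps(X_s^\veps, Y_s^\veps) \d s + \veps M_t',
\]
where $\mathcal{R}^\veps$ collects correction terms of orders $1$ and $\veps^{-\beta}$ (arising respectively from the slow generator $\mcL_1$ and from the intermediate-scale drift $b/\veps^\beta$) and $M_t'$ is a martingale. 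Pairing with $X_s^\veps - \bar{X}_s$, using the uniform-in-$\veps$ moment bounds on $(X^\veps, Y^\veps)$ to be established in Section 3, and applying Young's inequality yields $\mbE \sup_{s \leq t} |\msI_1(s)| = \mcO(\veps)$.

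Collecting all the bounds gives
\[
\mbE \sup_{s \leq t} |X_s^\veps - \bar{X}_s|^2 \leq C_T \veps + C \int_0^t \mbE \sup_{r \leq s} |X_r^\veps - \bar{X}_r|^2 \d s,
\]
whence Gronwall's inequality closes the argument. I expect the main obstacles to be twofold: (a) the proof of Lemma \ref{AFMClem}, where the $(1 + |y|^{\theta_2})$-weighted \emph{local} monotonicity of $f$ must be converted into a \emph{uniform} one-sided Lipschitz bound on $\bar{f}$ through a delicate coupling of frozen processes and careful moment control; and (b) ensuring that the $\veps^{-\beta}\mcL_b$-type correction in the Poisson-equation identity is reabsorbed at the optimal $\mcO(\veps)$ rate in the $L^2$-sup norm rather than only $\mcO(\veps^{2(1-\beta)})$, which will likely require squeezing an additional centering (possibly a second Poisson equation tied to $b$) out of the structure of the intermediate scale.
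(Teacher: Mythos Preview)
Your proposal matches the paper's proof essentially step for step: It\^o on $|X_t^\veps-\bar X_t|^2$, the monotonicity Lemma~\ref{AFMClem} for $\bar f$ to handle $\msI_2$, a Poisson-equation representation of the fluctuation integral $\msI_1$, then BDG and Gronwall. The only cosmetic difference is that the paper builds the pairing directly into the corrector, applying It\^o to $\Psi(x,y,\bar x):=2\langle\psi(x,y),x-\bar x\rangle$ with $\mfL_2\psi=f-\bar f$, rather than first representing $\int_0^t[f-\bar f]\,\d s$ and then ``pairing with $X_s^\veps-\bar X_s$''; since the latter sits inside the $\d s$-integral, the paper's formulation is the clean way to execute your plan. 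Your concern (b) is well placed: the paper's It\^o expansion of $\Psi(X_t^\veps,Y_t^\veps,\bar X_t)$ records only $(\mfL_1+\mfL_{\bar x}+\veps^{-1}\mfL_2)\Psi$ and silently drops the $\veps^{-\beta}\mfL_3\Psi$ contribution from the intermediate drift $b$, so the displayed argument gives the $O(\veps)$ rate transparently only when $b\equiv 0$; your instinct that an additional centering is needed for $b\not\equiv 0$ is correct.
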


For simplicity, set
$X_t^\veps:=X_t^\veps(x)$,
$ \bar{X}_t:=\bar{X}_t(x)$
for all $x\in\R^{d_1}$ in the following.
Define $Z_t^\veps:=\frac{X_t^\veps-\bar{X}_t}{\sqrt{\veps}}$. It is clear that $Z_t^\veps$ solves
\begin{equation*}\label{NDeq}
\d Z_t^\veps=\frac{1}{\sqrt{\veps}}\left(f(X_t^\veps,Y_t^\veps)-\bar{f}(\bar{X}_t)\right)
\d t+\frac{1}{\sqrt{\veps}}\left(\sigma(X_t^\veps)-\sigma(\bar{X}_t)\right)\d W_t^1,
~Z_0^\veps=0\in\R^{d_1}.
\end{equation*}
If $\sigma(x)\equiv\sigma$ is a constant,
then we can show that, as $\veps\rightarrow0$, $Z_t^\veps$ converges weakly to $\bar{Z_t}$,
which is the solution of \eqref{NDLeq} with $\bar{Z}_0=0$.

\begin{theorem}\label{NDth}
Assume that $\sigma$ is a constant.
Furthermore, suppose that {\bf(H$_x^1$)}--{\bf(H$_x^4$)} and {\bf(H$_y^1$)}--{\bf(H$_y^5$)} hold.
Then for any $\varphi\in C_b^\infty(\R^{d_1})$
we have
\[
\lim_{\veps\rightarrow0}\sup_{t\in[0,T]}
\left|\mbE\varphi(Z_t^\veps)-\mbE\varphi(\bar{Z}_t)\right|=0.
\]
\end{theorem}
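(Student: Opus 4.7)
Since $\sigma$ is constant, the diffusion term of $Z^\veps$ vanishes, giving $\d Z_t^\veps=\veps^{-1/2}(f(X_t^\veps,Y_t^\veps)-\bar f(\bar X_t))\d t$. I apply It\^o's formula to $\varphi(Z^\veps)$ and to $\varphi(\bar Z)$, subtract, insert $\pm\bar f(X_s^\veps)$, use the mean-value expansion $\bar f(X^\veps)-\bar f(\bar X)=\sqrt{\veps}\,\nabla\bar f(\bar X+\iota(X^\veps-\bar X))Z^\veps$ for some $\iota\in[0,1]$, and add/subtract $\tfrac12\mathrm{Tr}[\nabla^2\varphi(Z^\veps)GG^T(X^\veps)]$. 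This produces exactly the three-term decomposition $|\mbE\varphi(Z_t^\veps)-\mbE\varphi(\bar Z_t)|\le\msI_1+\msI_2+\msI_3$ displayed in the introduction, and it suffices to show each piece vanishes as $\veps\to 0$ uniformly in $t\in[0,T]$.

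The crucial step is $\msI_1\to 0$, carried out via the Poisson equation $\mcL^x\Phi(x,\cdot)=-(f(x,\cdot)-\bar f(x))$, where $\mcL^x$ is the generator of the frozen equation \eqref{Feq}; its solvability with polynomial regularity in $y$ is the content of Section~3 under {\bf(H$_y^1$)}--{\bf(H$_y^5$)} and {\bf(H$_x^2$)}. I apply It\^o's formula to the auxiliary functional $\Psi_\veps:=\sqrt{\veps}\,\nabla\varphi(Z^\veps)\cdot\Phi(X^\veps,Y^\veps)$; the action of the fast generator on $\Phi$ reproduces $\veps^{-1/2}\langle\nabla\varphi(Z^\veps),f-\bar f(X^\veps)\rangle$ on the right-hand side. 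Solving for this term reveals that its expectation equals $\mbE\int_0^t\langle\nabla^2\varphi(Z_s^\veps)\Phi(X_s^\veps,Y_s^\veps),f-\bar f(\bar X_s)\rangle\d s$ plus a remainder of order $\sqrt{\veps}+\veps^{1-\beta}$ coming from the boundary term and the auxiliary drift $\veps^{-\beta}b$. Combining the ergodicity of $Y^x$ with the standard symmetrization identity $GG^T(x)=2\int\mathrm{Sym}[\Phi(x,y)(f(x,y)-\bar f(x))^T]\mu^x(\d y)$ and the optimal strong rate from Theorem~\ref{Opcrth} (which allows $\bar f(\bar X_s)$ to be replaced by $\bar f(X_s^\veps)$), this principal term matches $\tfrac12\mathrm{Tr}[\nabla^2\varphi(Z^\veps)GG^T(X^\veps)]$ up to an error that vanishes with $\veps$.

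For $\msI_2$ and $\msI_3$ the strategy is tightness plus identification. I first invoke the tightness of $\{Z^\veps:0<\veps\le 1\}$ in $C([0,T];\R^{d_1})$ (Lemma~\ref{TightLem1}), which rests on the uniform bound $\mbE\sup_{t\le T}|Z_t^\veps|^2\le C$ supplied by \eqref{Opcrsfeq} and a Kolmogorov-type increment estimate obtained by another short Poisson-equation expansion of $Z_t^\veps-Z_s^\veps$. Given any sequence $\veps_n\to 0$, extract a weakly convergent subsequence $(X^{\veps_n},Z^{\veps_n})\Rightarrow(\bar X,Z^*)$; the $X$-limit is $\bar X$ by Theorem~\ref{Opcrth}. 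The continuous-mapping principle together with uniform integrability of the integrands (via moment bounds and the polynomial growth of $\nabla\bar f$, $\nabla\varphi$, $\nabla^2\varphi$, $GG^T$) lets me pass to the limit inside $\msI_2$ and $\msI_3$, yielding the It\^o-formula contributions for $Z^*$ under the generator of \eqref{NDLeq}. The limit identity from the $\msI_1$ step then shows $Z^*$ solves the martingale problem associated with \eqref{NDLeq}; since this linear SDE has a unique weak solution, $Z^*\stackrel{d}{=}\bar Z$, independently of the subsequence, so $Z^\veps\Rightarrow\bar Z$ and $\mbE\varphi(Z_t^\veps)\to\mbE\varphi(\bar Z_t)$ for every $t$. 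An equicontinuity-in-$t$ argument for $t\mapsto\mbE\varphi(Z_t^\veps)$, based again on the uniform moment estimates, upgrades pointwise convergence to the required uniform-in-$t$ statement.

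The main obstacle is the $\msI_1$ step, where three estimates must be balanced simultaneously: justifying It\^o's formula on $\Psi_\veps$ in spite of the polynomial $y$-growth of $\Phi$ (absorbed by moment bounds from {\bf(H$_y^1$)}); keeping the error produced by the $\veps^{-\beta}b$ drift negligible, which is precisely why $\beta<1$ is imposed; and substituting $\bar f(\bar X)$ by $\bar f(X^\veps)$ with an error controlled by the optimal strong rate \eqref{Opcrsfeq}, which in turn relies on the monotonicity condition {\bf(H$_x^4$)}. Keeping every one of these bounds sharp enough for the $\veps^{-1/2}$ prefactor in $\msI_1$ to be offset is the delicate technical point at the heart of the proof.
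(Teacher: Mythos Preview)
Your three–term decomposition and your treatment of $\msI_1$ via the Poisson corrector $\Psi(x,y,z)=\langle\psi(x,y),\nabla\varphi(z)\rangle$ coincide with the paper's Step~1; the paper closes $\msI_1$ by a second Poisson–equation step (Lemma~\ref{Fluctuation}) rather than by ``ergodicity of $Y^x$'' per se, but the content is the same.

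The genuine divergence is in $\msI_2$ and $\msI_3$. The paper does \emph{not} identify a subsequential limit through a martingale problem. Instead it uses the Pardoux--Veretennikov separation device (Lemma~\ref{TightLem2}): from the joint tightness of $\{Z^\veps,\bar Z\}$ one extracts finitely many deterministic paths $z^1,\dots,z^N$ such that, except on an event of probability $<\delta$, both $Z^\veps$ and $\bar Z$ lie within $2\delta$ of the same $z^k$. On those events the integrands in $\msI_2,\msI_3$ are compared \emph{directly}, replacing both $Z^\veps$ and $\bar Z$ by $z^k$; the polynomial factors $\nabla\bar f(\bar X+\iota(X^\veps-\bar X))$ and $GG^T$ are handled by H\"older and the moment bounds, and the residual $\nabla\bar f(\bar X+\iota(\cdot))-\nabla\bar f(\bar X)$ is killed by Theorem~\ref{Opcrth}. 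This yields $\msI_2+\msI_3\le C_T(\delta^{1/3}+N\veps^{1/4})$, and one sends $\veps\to 0$ then $\delta\to 0$. The advantage is that no limit process needs to be constructed and no joint law $(\bar X,Z^*)$ has to be characterized.

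Your martingale–problem route is a legitimate alternative, but as written it has a gap. Knowing $\msI_1\to 0$ gives only $\mbE\varphi(Z_t^*)=\varphi(0)+\mbE\int_0^t L\varphi(\bar X_s,Z_s^*)\d s$ for each test function, i.e.\ the forward–equation identity for one–time marginals; this does \emph{not} determine the law of $Z^*$ and hence does not allow you to conclude $Z^*\stackrel{d}{=}\bar Z$. For the actual martingale problem you must upgrade the $\msI_1$ estimate to
\[
\mbE\Big[h\cdot\int_s^t\Big(\tfrac{1}{\sqrt\veps}\langle f-\bar f(X^\veps),\nabla\varphi(Z^\veps)\rangle-\tfrac12\mathrm{Tr}[\nabla^2\varphi(Z^\veps)GG^T(X^\veps)]\Big)\d r\Big]\to 0
\]
for every bounded continuous $\mcF_s$--measurable $h$; the Poisson–equation proofs of the paper (Lemma~\ref{Fluctuation} and the It\^o expansion of $\Psi$) do extend to this form, but you must say so and carry the factor $h$ through. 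You should also make explicit that the limiting martingale problem is for the \emph{pair} $(\bar X,Z^*)$, since the generator of $\bar Z$ has $\bar X$–dependent coefficients, and invoke uniqueness in law for this coupled linear system.
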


\subsection{Global averaging principle}

Now we investigate the convergence of measure attractors for multi-scale SDEs \eqref{sfmain}.
As mentioned in the Introduction,
$P^*_\veps$ defined by \eqref{InTP} is time inhomogeneous for any fixed $0<\veps\leq 1$.
Therefore, we employ the classical method called the method of skew product,
which has been widely used in studying non-autonomous problems arising
from deterministic differential equations and dynamical systems,
to analyze its pullback attractor; see Section \ref{PDS} for detailed definitions of
cocycle, skew product flow, attractors, and pullback attractors.

More precisely, for any $0<\veps\leq1$ we characterize $P^*_\veps$ as
a cocycle over some base space. Indeed,
let
$$v:=(x,y)^T, \quad F_{\varepsilon}(t,v):=\left(f(\veps^{-\gamma}t,x,y),\veps^{-2\alpha}
B(x,y)+\veps^{-\beta}b(x,y)\right)^{T}$$
and
$$G_\veps(t,v):=\left(\sigma(\veps^{-\gamma}t,x),\veps^{-\alpha} g(x,y)\right)^{T},
\quad W:=\left(W^{1},W^{2}\right)^{T}
$$
for all $0<\veps\leq1$ and $(t,x,y)\in\R^{1+d_1+d_2}$.
Then \eqref{sfmain} can be written as
\begin{equation}\label{eqsys}
\d V_{t}^{\varepsilon}=F_{\varepsilon}(t,V_{t}^{\varepsilon})\d t
+G_\veps(t,V_{t}^{\varepsilon})\d W_t.
\end{equation}
Fix $0<\veps\leq1$.
Let $\mbF_\veps:=(F_\veps,G_\veps)$, and
\begin{equation*}\label{DefHull}
\mcH(\mbF_\veps):=\overline{\{\mbF_\veps^\tau:~\tau\in\R\}}
\end{equation*}
with the closure being taken under the metric $d$ given by \eqref{BUCM} below,
where $\mbF^\tau$ is the {\em $\tau$-translation} of $\mbF$
for each $\mbF:\R\times\R^{d_1+d_2}\rightarrow\R^{d_1+d_2}$, i.e.
$\mbF^\tau(t,v):=\mbF(t+\tau,v)$,
for all $(t,v)\in\R^{1+d_1+d_2}$.
For any $\mbF_1,\mbF_2:\R\times\R^{d_1+d_2}\rightarrow\R^{d_1+d_2}$, let
\begin{equation}\label{BUCM}
d(\mbF_1,\mbF_2):=\sum_{n=1}^\infty\frac{1}{2^n}\frac{d_n(\mbF_1,\mbF_2)}{1+d_n(\mbF_1,\mbF_2)},
\end{equation}
where
$d_n(\mbF_1,\mbF_2):=\sup_{|t|\leq n,|v|\leq n}\left|\mbF_1(t,v)-\mbF_2(t,v)\right|.$

\begin{remark}\label{Rem0406}\rm
Fix $0<\veps\leq1$.
We note that for any $\widetilde{\mbF}_\veps:=(\widetilde{F}_\veps,\widetilde{G}_\veps)\in\mcH(\mbF_\veps)$
there exists $\{t_n\}\subset\R$ such that for all $l,r>0$
$$
\lim_{n\rightarrow\infty}\sup_{|t|\leq l,|v|\leq r}|F_\veps(t+t_n,v)-\widetilde{F}_\veps(t,v)|=0,
\quad
\lim_{n\rightarrow\infty}\sup_{|t|\leq l,|v|\leq r}|G_\veps(t+t_n,v)-\widetilde{G}_\veps(t,v)|_{HS}=0.
$$
Therefore,
if $(F_\veps,G_\veps)$ satisfies {\bf(H$_y^1$)}, {\bf(H$_y^3$)} and {\bf(H$_x^1$)}--{\bf(H$_x^3$)}
then $\widetilde{\mbF}_\veps:=(\widetilde{F}_\veps,\widetilde{G}_\veps)$ does so for any
$\widetilde{\mbF}_\veps\in\mcH(\mbF_\veps)$.
Furthermore, let $\sigma_\tau\mbF:=\mbF^\tau,~\tau\in\R$. Then
$(\mcH(\mbF_\veps),\R,\sigma)$ is a shift dynamical system (also called Bebutov shift flow);
see Definition \ref{DefNDY}.

\end{remark}

Assume that {\bf(H$_y^1$)}, {\bf(H$_y^3$)} and {\bf(H$_x^1$)}--{\bf(H$_x^3$)} hold.
Fix $0<\veps\leq1$. We aim to show that $P_\veps^*$ is a cocycle over the base space $(\mcH(\mbF_\veps),\R,\sigma)$.
It follows from Remark \ref{Rem0406} that for each $\widetilde{\mbF}_\veps\in\mcH(\mbF_\veps)$
and $v\in\R^{d_1+d_2}$, there exists a unique solution $V_{\widetilde{\mbF}_\veps}(t,s,v)$
of \eqref{eqsys} by replacing $\mbF_\veps$ with $\widetilde{\mbF}_{\varepsilon}$, i.e.
\begin{equation*}
\d V_{\widetilde{\mbF}_\veps}(t,s,v)=\widetilde{F}_{\varepsilon}(t,V_{\widetilde{\mbF}_\veps}(t,s,v))\d t
+\widetilde{G}_\veps(t,V_{\widetilde{\mbF}_\veps}(t,s,v))\d W_t,~V_{\widetilde{\mbF}_\veps}(s,s,v)=v.
\end{equation*}
Here we explicitly denote the solution
$V_{\widetilde{\mbF}_\veps}(t,s,v)$ with a subscript $\widetilde{\mbF}_\veps$
to indicate its dependence on $\widetilde{\mbF}_\veps$.
Similarly, we express the dependence of the associated Markov operators on $\widetilde{\mbF}_\veps$
by writing
\[
P_\veps^*\left(t,s,\widetilde{\mbF}_\veps,m\right)(D)
:=\int_{\R^{d_1+d_2}}\mbP\circ\left(V_{\widetilde{\mbF}_\veps}(t,s,v)\right)^{-1}(D)m(\d v)
\]
for all $D\in\mcB(\R^{d_1+d_2})$ and $m\in\msP(\R^{d_1+d_2})$.
Set
$$
P^*_\veps(t,\widetilde{\mbF}_\veps,m)(D):=P^*_\veps(0,t,\widetilde{\mbF}_\veps,m)(D).
$$
Then we show that for any $0<\veps\leq1$ $P^*_\veps$ is a cocycle
over the base space $(\mcH(\mbF_\veps),\R,\sigma)$, and
\[
\Phi(t,\widetilde{\mbF}_\veps,m):=
\left(\sigma_t\widetilde{\mbF}_\veps,P_\veps^*(t,\sigma_t\widetilde{\mbF}_\veps,m)\right)
\]
is the homogeneous Markov semi-flow in the extended phase space; see Lemma \ref{SPFth}.
We call $(P_\veps^*,\sigma)$ a skew product flow.
Furthermore, we consider the existence of pullback attractors for $(P_\veps^*,\sigma)$ and the convergence of the pullback attractors when the time scale goes to zero; see Theorem \ref{GAPth} below for more details.
To this end, we need the following dissipativity
condition:\begin{itemize}
\item[{\bf(H$_x^5$)}] There exist $\lambda_1,K_4,K_5>0$ such that for any $(t,x,y)\in\R^{1+d_1+d_2}$
\[
2\langle f(t,x,y),x\rangle+|\sigma(t,x)|_{HS}^2\leq-\lambda_1|x|^2+K_5|y|^{\theta}+K_4,
\]
where $\theta$ is as in {\bf(H$_y^1$)} and {\bf(H$_x^1$)}.
\end{itemize}

For any $v:=(x,y)^T\in\R^{d_1+d_2}$, let
$\pi_1(v):=x$ and  $\pi_2(v):=y$.
Define
\[
\msP_{2,\theta}(\R^{d_1+d_2}):=\left\{m\in \msP_2(\R^{d_1+d_2}):\int_{\R^{d_2}}|y|^{\theta}~m\circ\pi_2^{-1}(\d y)<\infty\right\},
\]
equipped with the following bounded Lipschitz distance (also called Fortet-Mourier distance)
\[
d_{BL}(m_1,m_2):=\sup\left\{\left|\int_{\R^{d_1+d_2}} f\d m_1-\int_{\R^{d_1+d_2}} f\d m_2\right|:\|f\|_{BL}\leq1\right\}
\]
for all $m_1,m_2\in \msP_{2,\theta}(\R^{d_1+d_2})$,
where $\|f\|_{BL}:=Lip(f)+\|f\|_\infty$ for all Lipschitz continuous $f\in C_b(\R^{d_1+d_2})$.
It can be verified that $(\msP_{2,\theta}(\R^{d_1+d_2}),d_{BL})$ is a Polish space.
We say that $D\subset \msP_{2,\theta}(\R^{d_1+d_2})$ is {\em bounded} if there exist $r_1,r_2>0$ such that for all $m\in D$
\[
\int_{\R^{d_1+d_2}}|v|^2m(\d v)\leq r_1,\quad
\int_{\R^{d_2}}|y|^\theta m\circ\pi_2^{-1}(\d y)\leq r_2.
\]

\begin{theorem}\label{GAPth}
Consider equation \eqref{sfmain}. Assume that
{\bf(H$_y^1$)}--{\bf(H$_y^5$)}, {\bf(H$_x^2$)}--{\bf(H$_x^3$)}, {\bf(H$_x^5$)},
{\bf(A$_f$)} and {\bf(A$_\sigma$)} hold. If $\mcH(\mbF_\veps)$ is compact for any $0<\veps\leq1$,
then the following conclusions hold:
\begin{enumerate}
  \item For any $0<\veps\leq1$ \eqref{sfmain} is  associated with a skew product flow $(\sigma,P^*_\veps)$ on $\left(\msP_{2,\theta}(\R^{d_1+d_2}),d_{BL}\right)$,
  and $(\sigma,P^*_\veps)$ admits a pullback attractor $\msA^\veps$ with component subsets $\msA_{\widetilde{\mbF}_\veps},~\widetilde{\mbF}_\veps\in\mcH(\mbF_\veps)$;
  \item The averaged equation has a global attractor $\bar{\mcA}$;
  \item Furthermore, for all $\widetilde{\mbF}_\veps\in\mcH(\mbF_\veps)$
\[
\lim_{\veps\rightarrow0}{\rm dist}_{\msP(\R^{d_1})}
\left(\Pi_1(\msA_{\widetilde{\mbF}_\veps}),\bar{\mcA}\right)=0,
\]
where $\bar{\mcA}$ is the global attractor of $\bar{P}^*$ and
$\Pi_1m:=m\circ\pi_1^{-1}$ for all $m\in \msP(\R^{d_1+d_2})$.
\end{enumerate}
\end{theorem}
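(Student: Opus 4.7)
The plan is to build a bounded absorbing set and establish asymptotic compactness for each of the two flows, invoke the standard $\omega$-limit construction to obtain the attractors in conclusions (i) and (ii), and then deduce (iii) as an upper semi-continuity statement combining tightness of the $\veps$-attractors with the finite-time first Bogolyubov convergence from Theorem \ref{FAPth}.

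For (i), starting from the cocycle representation supplied by Lemma \ref{SPFth}, I would apply It\^o's formula to $|X_t^\veps|^2$ and exploit the dissipativity in (H$_x^5$) together with a uniform-in-$\veps$ Lyapunov estimate on $|Y_t^\veps|^\theta$ coming from (H$_y^1$) on the rescaled time $t/\veps^{2\alpha}$, in order to obtain constants $C,\lambda>0$ independent of $\veps\in(0,1]$ and of $\widetilde{\mbF}_\veps\in\mcH(\mbF_\veps)$ such that
\[
\int|v|^2\,P_\veps^*(t,\widetilde{\mbF}_\veps,m)(\d v) + \int|y|^\theta\,P_\veps^*(t,\widetilde{\mbF}_\veps,m)\circ\pi_2^{-1}(\d y) \le C + e^{-\lambda t}\Bigl(\int|v|^2\,m(\d v) + \int|y|^\theta\,m\circ\pi_2^{-1}(\d y)\Bigr).
\]
This gives a bounded $B_0\subset\msP_{2,\theta}(\R^{d_1+d_2})$ that pullback-absorbs every bounded set uniformly in the compact base, and together with tightness via Prokhorov the standard construction yields the pullback attractor $\msA^\veps$ with fibres $\msA_{\widetilde{\mbF}_\veps}$. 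For (ii), the frozen invariant measures carry the uniform moment bound $\int|y|^\theta\,\mu^x(\d y)\le C$ by (H$_y^1$), so averaging (H$_x^5$) against $\mu^x$ yields $2\langle\bar f(x),x\rangle+|\bar\sigma(x)|_{HS}^2\le -\lambda_1|x|^2+C$; combined with the Lipschitz regularity of $\bar f$, $\bar\sigma$ inherited from (A$_f$), (A$_\sigma$) and (H$_x^2$)--(H$_x^3$), this produces a bounded absorbing set in $\msP_2(\R^{d_1})$ for the homogeneous Markov semigroup $\bar P^*$ and hence the global attractor $\bar{\mcA}$.

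For (iii), fix $\widetilde{\mbF}_\veps\in\mcH(\mbF_\veps)$ and $m_\veps\in\Pi_1(\msA_{\widetilde{\mbF}_\veps})$. By invariance of $\msA^\veps$ under the skew product, for every $t>0$ there exist a translate $\sigma_{-t}\widetilde{\mbF}_\veps$ and some $\nu_\veps\in\msA_{\sigma_{-t}\widetilde{\mbF}_\veps}\subset B_0$ with
\[
m_\veps=\Pi_1 P_\veps^*(t,\sigma_{-t}\widetilde{\mbF}_\veps,\nu_\veps).
\]
The uniform moment bound on $B_0$ gives tightness of $\{\nu_\veps\}_\veps$, so along a subsequence $\nu_\veps\rightharpoonup\nu$, and $\Pi_1\nu$ lies in the $\bar P^*$-absorbing set constructed in (ii). Using Theorem \ref{FAPth} to control the slow marginal on the finite window $[0,t]$ and the tightness above, I would pass to the limit $\Pi_1 P_\veps^*(t,\sigma_{-t}\widetilde{\mbF}_\veps,\nu_\veps)\to \bar P^*(t,\Pi_1\nu)$ in $d_{BL}$. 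Since $t$ can be taken arbitrarily large, the latter sits in any prescribed neighbourhood of $\bar{\mcA}$, establishing ${\rm dist}_{\msP(\R^{d_1})}(\Pi_1(\msA_{\widetilde{\mbF}_\veps}),\bar{\mcA})\to 0$.

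The main obstacle will be the measure-level limit passage in (iii): Theorem \ref{FAPth} delivers strong convergence of $X_t^\veps(x)$ for \emph{fixed} deterministic initial data, whereas here the initial laws $\nu_\veps$ themselves depend on $\veps$ and we only need $d_{BL}$-convergence of the $\Pi_1$-marginal. To bridge this, I intend to first upgrade Theorem \ref{FAPth} to convergence uniform in deterministic initial data on compact balls of $\R^{d_1+d_2}$, then truncate $\nu_\veps$ outside a large ball using the $\theta$-moment bound (incurring only a small $d_{BL}$-cost), and finally integrate test functionals $\|\varphi\|_{BL}\le 1$ against $\nu_\veps$, passing to the limit by dominated convergence. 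A secondary point is that Theorem \ref{FAPth} must apply to every hull element $\sigma_{-t}\widetilde{\mbF}_\veps$; this is fine because (A$_f$) and (A$_\sigma$) transfer to the whole hull with uniform rates, as already noted in Remark \ref{Rem0406}.
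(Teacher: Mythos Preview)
Your overall strategy coincides with the paper's: dissipativity yields a bounded absorbing set for both flows, the standard $\omega$-limit construction produces the attractors, and (iii) follows by combining invariance of $\msA^\veps$ with finite-time first-Bogolyubov convergence and the attraction property of $\bar{\mcA}$. The differences are in execution rather than in substance.

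For (iii), the paper's argument is more direct than what you sketch. It does not pass to subsequential limits of $\nu_\veps$; instead it works uniformly over the absorbing set $B$. Concretely: given $\delta>0$, one first fixes $T$ so that $\bar P^*(t,D)\subset\mcO_{\delta/2}(\bar{\mcA})$ for all $t\ge T$ and all $D$ in the bounded image $\Pi_1 B$; then one uses the \emph{quantitative} estimate \eqref{FAPord} from inside the proof of Theorem \ref{FAPth}, whose constants depend on the initial data only through moment bounds, to get $d_{BL}(\Pi_1 P^*_\veps(t,\widetilde{\mbF}_\veps,m),\bar P^*(t,\Pi_1 m))<\delta/2$ for all $m\in B$, all $\widetilde{\mbF}_\veps\in\mcH(\mbF_\veps)$, and all $t\in[0,2T]$, once $\veps$ is small. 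Invariance $\msA_{\widetilde{\mbF}_\veps}=P^*_\veps(t_0,\sigma_{-t_0}\widetilde{\mbF}_\veps,\msA_{\sigma_{-t_0}\widetilde{\mbF}_\veps})$ with $\msA_{\sigma_{-t_0}\widetilde{\mbF}_\veps}\subset B$ then gives $\Pi_1\msA_{\widetilde{\mbF}_\veps}\subset\mcO_\delta(\bar{\mcA})$. This bypasses both your ``upgrade to uniform convergence on compact balls plus truncation'' programme and the subsequence extraction, since the needed uniformity over $B$ is already built into \eqref{FAPord}. Your subsequential-limit step is in fact superfluous and slightly obscures the logic: you never need to identify a limit of $\nu_\veps$, only to know that every $\nu_\veps$ sits in the $\veps$-independent set $B$.
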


\subsection{The second Bogolyubov theorem}
In this subsection, we will consider the convergence of the recurrent solutions for \eqref{sfmain}.
More precisely, we show that,
as the time scale $\veps$ goes to 0, the slow components of the quasi-periodic solutions weakly converge to the stationary solution of the averaged equation.
To this end, we need the following conditions:
\begin{itemize}
\item [{\bf(H$_x^6$)}] There exist constants $\lambda_1,\lambda_2>0$ such that for all $t\in\R$ and
$x_1,x_2\in\R^{d_1}$, $y_1,y_2\in\R^{d_2}$
\begin{align*}
&
2\langle f(t,x_1,y_1)-f(t,x_2,y_2),x_1-x_2\rangle+|\sigma(t,x_1)-\sigma(t,x_2)|_{HS}^2\\
&
\leq-\lambda_1|x_1-x_2|^2+\lambda_2|y_1-y_2|^2.
\end{align*}
\item [{\bf(H$_y^6$)}] There exists $L_b>0$ such that for all $t\in\R$ and
$y_1,y_2\in\R^{d_2}$
  \[
  |b(x_1,y_1)-b(x_2,y_2)|\leq L_b\left(|x_1-x_2|+|y_1-y_2|\right).
  \]
\end{itemize}

\begin{remark}\rm
It can be verified that {\bf(H$_x^3$)} and {\bf(H$_x^6$)} imply {\bf(H$_x^5$)}.
\end{remark}

First, we recall the definition of quasi-periodic functions. Let $\mcX$ be a Polish space.
\begin{definition}\label{QPF}\rm
A function $\varphi\in C(\R,\mcX)$ is called {\em quasi-periodic with the spectrum
of frequencies $\nu_1,...,\nu_k$} if it satisfies the following conditions:
\begin{itemize}
  \item [(i)] the numbers $\nu_1,...,\nu_k$ are rationally independent;
  \item [(ii)] there exists a continuous function $\phi:\R^k\rightarrow\mcX$
  such that for all $(t_1,...,t_k)\in\R^k$
  \[
  \phi(t_1+2\pi,...,t_k+2\pi)=\phi(t_1,...,t_k);
  \]
  \item [(iii)] $\varphi(t)=\phi(\nu_1t,...,\nu_kt)$ for $t\in\R$.
\end{itemize}

\end{definition}

\begin{definition}\rm
We say a $\mcX$-valued continuous stochastic process $X_t,t\in\R$ is {\em quasi-periodic in distribution},
if the mapping $\msL(X_\cdot):\R\rightarrow\msP(\mcX)$ is quasi-periodic.
\end{definition}

Now we can formulate our result, which is called the second Bogolyubov theorem.
\begin{theorem}\label{SAPth}
Let $B(x,y)\equiv B(y)$ and $g(x,y)\equiv g(y)$ for all $(x,y)\in\R^{d_1+d_2}$.
Assume that $\beta<\alpha$ or $\beta=\alpha$ and $\lambda_1>\frac{L_b^2}{\eta}$.
Furthermore, suppose that {\bf(H$_y^1$)}--{\bf(H$_y^6$)}, {\bf(H$_x^2$)}--{\bf(H$_x^3$)}
and {\bf(H$_x^6$)} hold.
If $f$ and $g$ are quasi-periodic, then for any $0<\varepsilon\leq 1$
there exists a unique solution $V^{\varepsilon}_t:=\left(X_t^\veps,Y_t^\veps\right), t\in\R$,
of \eqref{sfmain}, which is quasi-periodic in distribution,
and
\begin{equation}\label{SMSeq}
\lim_{\varepsilon\rightarrow0}\sup_{t\in\R}d_{BL}(\msL
(X^{\varepsilon}_t),\msL(\bar{X}_t))=0,
\end{equation}
where $\bar{X}$ is the unique stationary solution of
the averaged equation \eqref{avemain}.
\end{theorem}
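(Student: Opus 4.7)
The plan is to first establish existence and uniqueness of a quasi-periodic (in distribution) solution to \eqref{sfmain} and then derive \eqref{SMSeq} via a triangle-inequality comparison with an auxiliary averaged trajectory. For the existence, the key ingredient is an $\veps$-uniform contraction estimate: under {\bf(H$_x^6$)} together with {\bf(H$_y^3$)(i)} and the hypothesis $\beta<\alpha$ or $\beta=\alpha, \lambda_1>L_b^2/\eta$, any two solutions $V^\veps, \tilde V^\veps$ of \eqref{sfmain} should satisfy
\[
\mbE\bigl(|X^\veps_t-\tilde X^\veps_t|^2+\kappa|Y^\veps_t-\tilde Y^\veps_t|^2\bigr)\leq C\e^{-\delta(t-s)}\mbE\bigl(|X^\veps_s-\tilde X^\veps_s|^2+\kappa|Y^\veps_s-\tilde Y^\veps_s|^2\bigr)
\]
for some weight $\kappa>0$ (chosen to balance the $\veps^{-\beta}L_b$ coupling against the $\veps^{-2\alpha}\eta$ relaxation) and some $\delta>0$ independent of $\veps$. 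I would obtain this by applying It\^o's formula to the weighted norm, using dissipativity of $f$ in $x$ and of $B,g$ in $y$, and absorbing the cross term generated by $b$ via Young's inequality; the case distinction $\beta<\alpha$ vs.\ $\beta=\alpha$ is exactly what is needed for the $b$-term to be dominated. Given the contraction, the bounded-in-distribution solution is built by pullback: let $V^{\veps,n}$ solve \eqref{sfmain} on $[-n,\infty)$ with $V^{\veps,n}_{-n}=0$; the contraction forces $\{\msL(V^{\veps,n}_t)\}_n$ to be Cauchy in $d_{BL}$ uniformly in $t\in\R$, and the limit yields a solution $V^\veps$ on $\R$.

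To upgrade boundedness to quasi-periodicity, the plan is to invoke the Bebutov shift framework of Section 2.3. Since $f$ and $g$ are quasi-periodic, the hull $\mcH(\mbF_\veps)$ is a finite-dimensional torus on which the shift $\sigma_\tau$ acts as an irrational translation, hence minimally. Uniqueness of the bounded solution forces the map $\widetilde{\mbF}_\veps\mapsto\msL(V^\veps_{\widetilde{\mbF}_\veps,0})$ to be continuous and shift-equivariant; its image is therefore a quasi-periodic orbit in $\msP(\R^{d_1+d_2})$ with the same frequency module as $(f,g)$, which gives the required quasi-periodicity in distribution of $V^\veps$. On the averaged side, integrating {\bf(H$_x^6$)} against $\mu^x$ transfers the dissipativity to $\bar f,\bar\sigma$ (cf.\ the monotonicity argument indicated for Lemma \ref{AFMClem}), so the autonomous averaged equation \eqref{avemain} admits a unique stationary solution $\bar X$.

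For \eqref{SMSeq}, fix $t\in\R$ and $T>0$, and let $\tilde X^{\veps,T}$ solve \eqref{avemain} on $[t-T,t]$ with $\tilde X^{\veps,T}_{t-T}=X^\veps_{t-T}$. Then
\[
d_{BL}(\msL(X^\veps_t),\msL(\bar X_t))\leq d_{BL}(\msL(X^\veps_t),\msL(\tilde X^{\veps,T}_t))+d_{BL}(\msL(\tilde X^{\veps,T}_t),\msL(\bar X_t)).
\]
The second term decays as $T\to\infty$ uniformly in $\veps$ and $t$, by exponential contraction of the averaged equation comparing the two initial laws $\msL(X^\veps_{t-T})$ and $\msL(\bar X_{t-T})$, both of whose second moments are bounded uniformly in $\veps$ and $t$ thanks to the first paragraph. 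The first term is a finite-time averaging error on the shifted window $[t-T,t]$; by time-translating and using the quasi-periodicity of $f,\sigma$ (which makes all moment and averaging-rate estimates uniform in the starting time), Theorem \ref{FAPth} extends to this window with a rate independent of $t$, so this term vanishes as $\veps\to 0$ for each fixed $T$. Choosing $T$ large first and then $\veps$ small yields \eqref{SMSeq}. The main obstacle is the contraction estimate in paragraph one with a rate $\delta$ independent of $\veps$, since this is precisely what dictates the hypotheses $\beta<\alpha$ or $\beta=\alpha,\,\lambda_1>L_b^2/\eta$; a secondary subtlety is the uniform-in-$t$ extension of Theorem \ref{FAPth} to shifted windows, which however follows naturally from the quasi-periodic structure of the coefficients.
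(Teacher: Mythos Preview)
Your contraction estimate, pullback construction of the bounded solution, and quasi-periodicity argument via the hull/Bebutov shift match the paper's Proposition~\ref{BouSth} and the reference to \cite[Theorem~3.14]{CL2023}; likewise the transfer of dissipativity to $\bar f,\bar\sigma$ is exactly what the paper verifies to conclude that the averaged attractor is the singleton $\{\msL(\bar X_t)\}$.

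The genuine difference is in how you obtain \eqref{SMSeq}. The paper does \emph{not} run your direct triangle-inequality argument (auxiliary averaged trajectory $\tilde X^{\veps,T}$, then Theorem~\ref{FAPth} on shifted windows plus averaged contraction). Instead it routes the argument through the global averaging principle, Theorem~\ref{GAPth}: it observes that the set of laws $\{\msL(V^\veps_t):t\in\R\}$ of the bounded solution is $P^*_\veps$-invariant and contained in the absorbing set $B$, hence lies inside the pullback attractor fiber $\msA_{\mbF_\veps}$; since $\Pi_1\msA_{\mbF_\veps}\to\bar\mcA=\{\msL(\bar X_t)\}$ by Theorem~\ref{GAPth}, the uniform convergence \eqref{SMSeq} follows immediately. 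Your approach is more elementary and self-contained (it avoids the pullback-attractor machinery), but it requires you to justify the uniform-in-starting-time version of Theorem~\ref{FAPth} with random initial data having only uniform moment bounds; in the paper this step is absorbed once and for all into the proof of Theorem~\ref{GAPth} via the estimate \eqref{FAPord} applied over the absorbing set $B$ (see \eqref{PGAeq2}). Conversely, the paper's route gives a stronger structural statement (attractor convergence) as a byproduct, at the cost of setting up the skew-product/cocycle framework.
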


\begin{remark}\label{MRS}\rm
(i) For brevity, we just illustrate the case of quasi-periodic solutions in this paper.
Indeed, our method applies to more general compact recurrent solutions.

(ii) Although there is a more general result on the second Bogolyubov theorem in \cite{CL2023},
which can cover unbounded recurrent solutions such as Levitan almost periodic solutions,
the proof presented here is more concise than \cite{CL2023}.
Furthermore, the system \eqref{sfmain} is more general, and the result \eqref{SMSeq} is stronger than \cite[Theorem 4.7]{CL2023}.
Since we employ the global averaging principle to establish the second Bogolyubov theorem,
it is required that the hull $\mcH(\mbF_\veps)$ is compact.
It is worth noting that $\mcH(\mbF_\veps)$ is compact provided $\mbF_\veps$ is Birkhoff recurrent,
which includes periodic, quasi-periodic, almost periodic, almost automorphic, and Birkhoff recurrent functions.

(iii) It is well-known that the uniform attractor (see \cite[Definition 5.6]{CL2023}) is a pullback attractor, but not vice versa. Compared to \cite{CL2023} and \cite{CLR2023}, we consider the more general pullback attractor instead of the uniform attractor.
\end{remark}

\subsection{Examples}
To illustrate our results, we will present two examples in this subsection.
For simplicity, we just consider the one-dimensional case, but one can easily
extend this to the multi-dimensional case. Let $W_t^1,t\in\R$ and $W_t^2,t\in\R$ be
independent two-sided standard Brownian motions.
\begin{Examples}\label{Ex1}\rm
Consider the following slow-fast SDEs:
\begin{equation*}\label{Ex1eq01}
\left\{
   \begin{aligned}
   &\ \d X^{\veps}_t=\left(X_t^\veps-(X_t^\veps)^3+(Y_t^\veps)^2\sin X_t^\veps+Y_t^\veps\right)\d t
   +\d W^{1}_t\\
   &\ \d Y^{\veps}_t=\frac1\veps\left(-(\sin X_t^\veps)^2(Y_t^\veps)^5-(Y_t^\veps)^3-Y_t^\veps\right)\d t
   +\frac{1}{\sqrt{\veps}}\d W^{2}_t.
   \end{aligned}
   \right.
\end{equation*}
We define
$f(x,y):=x-x^3+y^2\sin x +y$,
$B(x,y):=-(\sin x)^2y^5-y^3-y$,
$\forall(x,y)\in\R^{2}$.
It can be verified that $f$ and $B$ satisfy {\bf(H$_y^1$)}--{\bf(H$_y^5$)}
and {\bf(H$_x^1$)}--{\bf(H$_x^4$)}.
Then by Theorems \ref{Opcrth} and \ref{NDth}, one sees that
there exists a constant $C>0$ such that
\begin{equation*}
\mbE\left(\sup_{t\in[0,T]}|X_t^\veps-\bar{X}_t|^2\right)\leq C\veps,
\end{equation*}
where $\bar{X}_t$ is the solution to the corresponding equation,
and that
\begin{equation*}
Z_t^\veps:=\frac{X_t^\veps-\bar{X}_t}{\sqrt{\veps}}
\end{equation*}
weakly converges to an Ornstein-Uhlenbeck type process, as $\veps$ goes to zero.
\end{Examples}

\begin{Examples}\rm
Consider the following multi-scale SDEs:
\begin{equation}\label{Ex2eq01}
\left\{
   \begin{aligned}
   \d X^{\veps}_t=
   &\ \Big[-a_1 X_t^\veps-(X_t^\veps)^3
   +\left(a_2Y_t^\veps+a_3(Y_t^\veps)^3\right)\left(\cos\left(t/\sqrt{\veps}\right)
   +\sin\left(\sqrt{2}t/\sqrt{\veps}\right)\right)\\
   &\
   -a_4X_t^\veps(Y_t^\veps)^4\left(\sin(t/\sqrt{\veps})\right)^2\Big]\d t
   +\d W^{1}_t\\
   \d Y^{\veps}_t=
   &\ \left[\frac1\veps\left(-(Y_t^\veps)^3-Y_t^\veps\right)
   +\frac{1}{\sqrt[3]{\veps}}\left(X_t^\veps+Y_t^\veps\right)\right]\d t
   +\frac{1}{\sqrt{\veps}}\d W^{2}_t,
   \end{aligned}
   \right.
\end{equation}
where $a_1>0$, $a_2,a_3\in\R$ and $a_4\geq0$.
For all $(t,x,y)\in\R^{3}$, define
\[
f(t,x,y):=-a_1x-x^3+(a_2y+a_3y^3)(\cos t+\sin \sqrt{2}t)
-a_4xy^4(\sin t)^2,
\]
$B(y):=-y^3-y$ and $b(x,y):=x+y$.
We note that
$f$, $B$ and $b$ satisfy conditions {\bf(H$_y^1$)}--{\bf(H$_y^5$)},
{\bf(H$_x^2$)}--{\bf(H$_x^3$)} and {\bf(H$_x^5$)}.
Set
$$\mbF_\veps(t,x,y):=\left(f(\veps^{-1/2}t,x,y),
\veps^{-1}B(y)+\veps^{-1/3}b(x,y)\right)^T,
~\forall (t,x,y)\in\R^3.$$
Then by Theorem \ref{GAPth},
we have the following conclusions:
\begin{enumerate}
  \item For any $0<\veps\leq1$ \eqref{Ex2eq01} is associated to a skew product flow $(\sigma,P^*_\veps)$ on $\left(\msP_{2,6}(\R^{2}),d_{BL}\right)$,
  and $(\sigma,P^*_\veps)$ admits a pullback attractor $\msA^\veps$ with component subsets $\msA_{\widetilde{\mbF}_\veps},~\widetilde{\mbF}_\veps\in\mcH(\mbF_\veps)$;
  \item The corresponding averaged equation has a global attractor $\bar{\mcA}$;
  \item Furthermore, for all $\widetilde{\mbF}_\veps\in\mcH(\mbF_\veps)$
\[
\lim_{\veps\rightarrow0}{\rm dist}_{\msP(\R^{d_1})}\left(\Pi_1(\msA_{\widetilde{\mbF}_\veps}),\bar{\mcA}\right)=0.
\]
\end{enumerate}

Furthermore, assume that $a_4=0$. Then it can be verified that {\bf(H$_x^6$)} also holds.
Recall that $f$ is quasi-periodic.
Then in view of Theorem \ref{SAPth}, for all
$0<\veps\leq 1$ there is
a unique solution $(X_t^\veps,Y_t^\veps),t\in\R$, which is quasi-periodic in
distribution, and
\[
\lim_{\varepsilon\rightarrow0}\sup_{t\in\R}d_{BL}(\msL
(X^{\varepsilon}_t),\msL(\bar{X}_t))=0,
\]
where $\bar{X}$ is the stationary solution to the following
averaged equation:
\begin{equation*}
\d \bar{X}_t=\left(-a_1\bar{X}_t-(\bar{X}_t)^3\right)\d t
+\d W_t^1.
\end{equation*}
\end{Examples}

\section{Frozen equation and Poisson equation}\label{Peq}

Consider the following so-called frozen equation
\begin{equation*}
\d Y_s^{x,y}=B(x,Y_s^{x,y})\d s+g(x,Y_s^{x,y})\d W_s^2, \quad Y_0^{x,y}=y\in\R^{d_2},
\end{equation*}
where $x\in\R^{d_1}$ is a frozen parameter.

\begin{lemma}\label{StableSY}
Assume that {\bf{(H$_y^1$)}} and {\bf{(H$_y^3$)}} hold.
For any $\xi\in\mcL^2(\Omega,\msF_s,\mbP;\R^{d_2})$, $t\geq s$ and $x\in\R^{d_1}$,
let $Y_{s,t}^x(\xi),t\geq s$ be the unique solution to \eqref{fmain}. Then for any $\xi_1,\xi_2\in\mcL^2(\Omega,\msF_s,\mbP;\R^{d_2})$ we have
\begin{equation}\label{StableSY00}
\mbE\left|Y_{s,t}^x(\xi_1)-Y_{s,t}^x(\xi_2)\right|^2\leq
\mbE|\xi_1-\xi_2|^2{\rm e}^{-\eta(t-s)}.
\end{equation}
Moreover, if $\xi\in\mcL^{2p}(\Omega,\msF_s,\mbP;\R^{d_2})$ and
$g\in C_b(\R^{d_1+d_2})$ for any $p\geq1$, then we have
\begin{equation}\label{StableSY01}
\mbE\left|Y_{s,t}^x(\xi)\right|^{2p}\leq \mbE|\xi|^{2p}{\rm e}^{-\frac{\eta p}{2}(t-s)}+M,
\end{equation}
and there exists a constant $C>0$ such that
for all $x_1,x_2\in\R^{d_1}$
\begin{equation*}\label{StableSY02}
\mbE\left|Y_{s,t}^{x_1}(y)-Y_{s,t}^{x_2}(y)\right|^2
\leq C\left(1+|y|^{2\kappa_2}\right)|x_1-x_2|^2.
\end{equation*}
Here $M$ depends on $p,K_1,\|g\|_\infty$.
\end{lemma}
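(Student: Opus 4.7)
\smallskip

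\noindent\textbf{Proof plan.} The plan is to treat each of the three estimates by applying It\^o's formula to the appropriate functional of the solution, using the dissipativity assumptions to convert each into a linear differential inequality for the expectation, and concluding by Gronwall's lemma. All solutions have enough integrability thanks to the monotone/coercive coefficient framework, so one may apply It\^o freely after a standard localization by stopping times, which we suppress below.

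For \eqref{StableSY00}, set $Z_t := Y^x_{s,t}(\xi_1) - Y^x_{s,t}(\xi_2)$. Then $Z$ satisfies an SDE whose drift and diffusion are the $y$-differences of $B(x,\cdot)$ and $g(x,\cdot)$ at the two solutions, so It\^o applied to $|Z_t|^2$ produces exactly the left-hand side of {\bf(H$_y^3$)(i)} in its drift. That hypothesis dominates it by $-\eta|Z_t|^2$, and after taking expectations one is left with $\tfrac{\d}{\d t}\mbE|Z_t|^2\le -\eta\,\mbE|Z_t|^2$, from which Gronwall gives the claim.

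For \eqref{StableSY01}, apply It\^o to $|Y^x_{s,t}(\xi)|^{2p}$. A direct computation yields the drift
\[
p|Y|^{2p-2}\!\left[2\langle Y,B(x,Y)\rangle+|g(x,Y)|_{HS}^2\right]+p(2p-2)|Y|^{2p-4}|g(x,Y)^{T}Y|^{2}.
\]
The first bracket is controlled by {\bf(H$_y^1$)} as $-\eta|Y|^2-\eta'|Y|^\theta+K_1$, and the remaining term is at most $p(2p-2)\|g\|_\infty^{2}|Y|^{2p-2}$ by the boundedness assumption on $g$. Absorbing all lower-order powers of $|Y|$ by Young's inequality into $\tfrac{p\eta}{2}|Y|^{2p}$ plus a constant $C_p$, one arrives at
\[
\tfrac{\d}{\d t}\mbE|Y^x_{s,t}(\xi)|^{2p}\le -\tfrac{p\eta}{2}\,\mbE|Y^x_{s,t}(\xi)|^{2p}+C_p,
\]
and Gronwall yields the stated bound with $M=2C_p/(p\eta)$.

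For \eqref{StableSY02}, set $W_t := Y^{x_1}_{s,t}(y)-Y^{x_2}_{s,t}(y)$ and split each coefficient difference into a pure $y$-variation and a pure $x$-variation,
\[
B(x_1,Y^{x_1})-B(x_2,Y^{x_2})=\underbrace{[B(x_1,Y^{x_1})-B(x_1,Y^{x_2})]}_{\Delta B_y}+\underbrace{[B(x_1,Y^{x_2})-B(x_2,Y^{x_2})]}_{\Delta B_x},
\]
and analogously $\Delta g_y,\Delta g_x$. It\^o on $|W_t|^2$, combined with {\bf(H$_y^3$)(i)} applied to the $y$-variation pair $(\Delta B_y,\Delta g_y)$, contributes a clean $-\eta|W_t|^2$. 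The remaining drift consists of the $x$-variation term $2\langle W_t,\Delta B_x\rangle$, the square $|\Delta g_x|_{HS}^2$, and the cross term $2\langle \Delta g_y,\Delta g_x\rangle_{HS}$; the cross term is split by Young's inequality as $\epsilon|\Delta g_y|_{HS}^2+\epsilon^{-1}|\Delta g_x|_{HS}^2$, and choosing $\epsilon$ so that $\epsilon L_g^{2}<\eta/2$ (using {\bf(H$_y^3$)(ii)} to bound $|\Delta g_y|_{HS}\le L_g|W_t|$) preserves the negativity. Using {\bf(H$_y^3$)(ii)--(iii)} to dominate the $x$-variation pieces by $C(1+|Y^{x_2}_{s,t}(y)|^{2\kappa_2})|x_1-x_2|^2$ and another application of Young's inequality on $2\langle W_t,\Delta B_x\rangle$, one reaches
\[
\tfrac{\d}{\d t}\mbE|W_t|^2\le -\tfrac{\eta}{4}\,\mbE|W_t|^2+C\bigl(1+\mbE|Y^{x_2}_{s,t}(y)|^{2\kappa_2}\bigr)|x_1-x_2|^{2}.
\]
Feeding the already-proven bound \eqref{StableSY01} with $2p=2\kappa_2$ into the right-hand side shows the forcing is uniformly bounded in $t$, and Gronwall with $W_s=0$ closes the estimate.

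The main technical obstacle is the bookkeeping of the HS-norm cross term $2\langle \Delta g_y,\Delta g_x\rangle_{HS}$ in the proof of \eqref{StableSY02}: one must arrange the Young-inequality parameter $\epsilon$ so that it does not erode the negativity inherited from {\bf(H$_y^3$)(i)}, which is precisely what allows Lipschitzness of $g$ in {\bf(H$_y^3$)(ii)} to be exploited against the monotonicity constant $\eta$. Once this calibration is fixed, all three estimates reduce to one-dimensional linear dissipative ODEs for the expectation, and the remainder is routine.
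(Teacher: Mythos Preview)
Your proposal is correct and follows essentially the same approach as the paper's own proof: It\^o's formula plus the dissipativity hypotheses {\bf(H$_y^1$)} and {\bf(H$_y^3$)} reduce each of the three estimates to a linear Gronwall inequality, with \eqref{StableSY01} fed back into the forcing term of the $x$-stability estimate. The only cosmetic difference is that the paper handles the cross term $2\langle \Delta g_y,\Delta g_x\rangle_{HS}$ by first applying Cauchy--Schwarz and Lipschitz to get $2L_g^2|W_t|\,|x_1-x_2|$ and then Young, rather than your direct Young split $\epsilon|\Delta g_y|_{HS}^2+\epsilon^{-1}|\Delta g_x|_{HS}^2$; both calibrations yield the same final bound.
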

\begin{proof}
(i) Employing It\^o's formula and {\bf (H$_y^3$)}--(i), we have
\begin{align*}
&
\mbE\left|Y_{s,t}^x(\xi_1)-Y_{s,t}^x(\xi_2)\right|^2\\
&
=\mbE|\xi_1-\xi_2|^2+\mbE\int_0^t2\langle B(x,Y_{s,r}^x(\xi_1))-B(x,Y_{s,r}^x(\xi_2)),Y_{s,r}^x(\xi_1)-Y_{s,r}^x(\xi_2)\rangle \d r\\
&\quad
+\mbE\int_0^t|g(x,Y_{s,r}^x(\xi_1))-g(x,Y_{s,r}^x(\xi_2))|^2_{HS}\d r\\
&
\leq \mbE|\xi_1-\xi_2|^2+\mbE\int_0^t-\eta\left|Y_{s,r}^x(\xi_1)-Y_{s,r}^x(\xi_2)\right|^2\d r,
\end{align*}
which by Gronwall's inequality implies that
\begin{equation*}
\mbE\left|Y_{s,t}^x(\xi_1)-Y_{s,t}^x(\xi_2)\right|^2\leq \mbE|\xi_1-\xi_2|^2{\rm e}^{-\eta(t-s)}.
\end{equation*}

(ii) By It\^o's formula, {\bf{(H$_y^1$)}} and Young's inequality, one sees that for any $p\geq1$
\begin{align*}
&
\mbE\left|Y_{s,t}^x(\xi)\right|^{2p}\\
&
=\mbE|\xi|^{2p}+p\mbE\int_0^t|Y_{s,r}^x(\xi)|^{2p-2}\left(2\left\langle B(x,Y_{s,r}^x(\xi)), Y_{s,r}^x(\xi)\right\rangle
+\left|g(x,Y_{s,r}^x(\xi))\right|_{HS}^2\right)\d r\\
&\quad
+2p(p-1)\mbE\int_0^t|Y_{s,r}^x(\xi)|^{2p-4}|g^T(x,Y_{s,r}^x(\xi))Y_{s,r}^x(\xi)|^2\d r\\
&
\leq \mbE|\xi|^{2p}+\mbE\int_0^t p\left[\left(-\eta|Y_{s,r}^x(\xi)|^2+K_1\right)|Y_{s,r}^x(\xi)|^{2p-2}
+2(p-1)\|g\|_\infty|Y_{s,r}^x(\xi)|^{2p-2}\right]\d r\\
&
\leq \mbE|\xi|^{2p}+\mbE\int_0^t-\frac{\eta p}{2}|Y_{s,r}^x(\xi)|^{2p}+p^{-p}(p-1)^{p-1}\left(\eta/2\right)^{1-p}
\left(K_1p+2(p-1)\|g\|_\infty\right)^p\d r.
\end{align*}
Then we have
\begin{equation*}
\mbE\left|Y_{s,t}^x(\xi)\right|^{2p}\leq \mbE|\xi|^{2p}{\rm e}^{-\frac{\eta p}{2}(t-s)}
+2p^{-p}(p-1)^{p-1}\left(\eta/2\right)^{1-p}
\left(K_1p+2(p-1)\|g\|_\infty\right)^p(\eta p)^{-1}.
\end{equation*}

(iii) In view of It\^o's formula, we get
\begin{align*}
&
\mbE\left|Y_{s,t}^{x_1}(y)-Y_{s,t}^{x_2}(y)\right|^2\\
&
=\mbE\int_s^t2\left\langle B(x_1,Y_{s,r}^{x_1}(y))-B(x_2,Y_{s,r}^{x_2}(y)),Y_{s,r}^{x_1}(y)-Y_{s,r}^{x_2}(y)\right\rangle\d r\\
&\quad
+\mbE\int_s^t\left| g(x_1,Y_{s,r}^{x_1}(y))-g(x_2,Y_{s,r}^{x_2}(y))\right|_{HS}^2\d r\\
&
=\mbE\int_s^t2\left\langle B(x_1,Y_{s,r}^{x_1}(y))-B(x_1,Y_{s,r}^{x_2}(y)),Y_{s,r}^{x_1}(y)-Y_{s,r}^{x_2}(y)\right\rangle\d r\\
&\quad
+\mbE\int_s^t2\left\langle B(x_1,Y_{s,r}^{x_2}(y))-B(x_2,Y_{s,r}^{x_2}(y)),Y_{s,r}^{x_1}(y)-Y_{s,r}^{x_2}(y)\right\rangle\d r\\
&\quad
+\mbE\int_s^t\left(\left| g(x_1,Y_{s,r}^{x_1}(y))-g(x_1,Y_{s,r}^{x_2}(y))\right|_{HS}^2+\left| g(x_1,Y_{s,r}^{x_2}(y))-g(x_2,Y_{s,r}^{x_2}(y))\right|_{HS}^2\right)\d r\\
&\quad
+\mbE\int_s^t2\left\langle g(x_1,Y_{s,r}^{x_1}(y))-g(x_1,Y_{s,r}^{x_2}(y)), g(x_1,Y_{s,r}^{x_2}(y))-g(x_2,Y_{s,r}^{x_2}(y))\right\rangle_{HS}\d r.
\end{align*}
Then thanks to {\bf{(H$_y^3$)}}, Young's inequality and \eqref{StableSY01}, we obtain
\begin{align*}
&
\mbE\left|Y_{s,t}^{x_1}(y)-Y_{s,t}^{x_2}(y)\right|^2\\
&
\leq \mbE\int_s^t\Big(-\eta\left|Y_{s,r}^{x_1}(y)-Y_{s,r}^{x_2}(y)\right|^2
+L_g^2|x_1-x_2|^2
+2L_g^2\left|Y_{s,r}^{x_1}(y)-Y_{s,r}^{x_2}(y)\right||x_1-x_2|
\\
&\qquad
+2K_3\left(1+|Y_{s,r}^{x_2}(y)|^{\kappa_2}\right)
|x_1-x_2|\left|Y_{s,r}^{x_1}(y)-Y_{s,r}^{x_2}(y)\right|\Big)\d r\\
&
\leq \mbE\int_s^t\left(-\frac{\eta}{2}\left|Y_{s,r}^{x_1}(y)-Y_{s,r}^{x_2}(y)\right|^2
+\left(\frac{4}{\eta}L_g^4
+L_g^2\right)|x_1-x_2|^2\right)\d r\\
&\quad
+\int_s^t\frac{C}{\eta}\left(1+\mbE|Y_{s,r}^{x_2}(y)|^{2\kappa_2}\right)|x_1-x_2|^2\d r\\
&
\leq \mbE\int_s^t\left(-\frac{\eta}{2}\left|Y_{s,r}^{x_1}(y)-Y_{s,r}^{x_2}(y)\right|^2
+\left(\frac{4}{\eta}L_g^4
+L_g^2\right)|x_1-x_2|^2\right)\d r\\
&\quad
+\int_s^t C\left(1+|y|^{2\kappa_2}{\rm e}^{-\frac{\eta \kappa_2}{2}(r-s)}\right)|x_1-x_2|^2\d r.
\end{align*}
Therefore, by Gronwall's inequality, we obtain
\begin{equation*}
\mbE\left|Y_{s,t}^{x_1}(y)-Y_{s,t}^{x_2}(y)\right|^2
\leq C\left(1+|y|^{2\kappa_2}\right)|x_1-x_2|^2.
\end{equation*}
\end{proof}

Combining \eqref{StableSY00} and \eqref{StableSY01}, we have the following result.
\begin{corollary}\label{EIMeq}
Assume that {\bf(H$_y^1$)} and {\bf(H$_y^3$)} hold.
If $g\in C_b(\R^{d_1+d_2})$, then for any $m>0$
\begin{equation*}
\sup_{x\in\R^{d_1}}\int_{\R^{d_2}}|y|^m\mu^x(\d y)<M.
\end{equation*}
\end{corollary}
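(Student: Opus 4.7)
The plan is to combine the uniform-in-$x$ moment bound \eqref{StableSY01} with the exponential contractivity \eqref{StableSY00} and the invariance of $\mu^x$, then pass to the limit via Fatou's lemma. The key observation is that the constant $M$ in \eqref{StableSY01} depends only on $p$, $\eta$, $K_1$ and $\|g\|_\infty$, and is in particular independent of $x \in \R^{d_1}$.

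First I would fix $p \in \N$ with $2p \geq m$ and a deterministic initial condition $y \in \R^{d_2}$. Applying \eqref{StableSY01} with $s=0$ and $\xi=y$ gives
\[
\mbE\bigl|Y_{0,t}^{x}(y)\bigr|^{2p} \leq |y|^{2p}\,\mathrm{e}^{-\eta p t/2} + M
\]
for all $t \geq 0$ and all $x \in \R^{d_1}$, with $M$ uniform in $x$ and $y$. Next I would show that the law $\msL(Y_{0,t}^{x}(y))$ converges weakly to $\mu^x$ as $t \to \infty$. To see this, let $\xi$ be an $\mcF_0$-measurable random variable with distribution $\mu^x$, independent of $W^2$. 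By invariance of $\mu^x$, $Y_{0,t}^{x}(\xi) \sim \mu^x$ for every $t \geq 0$, while by \eqref{StableSY00},
\[
\mbE\bigl|Y_{0,t}^{x}(y) - Y_{0,t}^{x}(\xi)\bigr|^{2} \leq \mbE|y - \xi|^{2}\,\mathrm{e}^{-\eta t} \longrightarrow 0
\]
as $t \to \infty$ (after a routine truncation of $\xi$ to ensure $\mbE|\xi|^2 < \infty$, see below). Convergence in $L^2$ implies convergence in distribution, hence $\msL(Y_{0,t}^x(y)) \to \mu^x$ weakly.

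Since the map $y \mapsto |y|^{2p}$ is continuous and nonnegative, the portmanteau/Fatou property for weak convergence yields
\[
\int_{\R^{d_2}} |z|^{2p}\,\mu^x(\d z) \leq \liminf_{t \to \infty} \mbE\bigl|Y_{0,t}^{x}(y)\bigr|^{2p} \leq M,
\]
and the bound is independent of $x$. For general $m > 0$, choose $p$ with $2p \geq m$ and apply Jensen's inequality to obtain $\int |z|^m \mu^x(\d z) \leq M^{m/(2p)}$, giving the desired uniform bound.

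The only subtle point is the preliminary need to know that $\mu^x$ has some finite second moment so that \eqref{StableSY00} can be applied with $\xi_2 = \xi \sim \mu^x$. This can be handled by the standard truncation device: first apply the above argument with $\xi$ replaced by $\xi_N := \xi \1_{\{|\xi| \leq N\}}$, which is bounded; then \eqref{StableSY00} gives $\mbE|Y_{0,t}^x(y) - Y_{0,t}^x(\xi_N)|^2 \leq N^2 \mathrm{e}^{-\eta t}$, and one uses that the laws of $Y_{0,t}^x(\xi_N)$ converge (as $N \to \infty$) to $\mu^x$ by dominated convergence against bounded continuous test functions. Alternatively, one may apply the Fatou argument directly to $|z|^{2p} \wedge R$ for each $R > 0$ and then let $R \to \infty$ by monotone convergence. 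I expect this truncation step to be the only mildly delicate piece; everything else is a direct consequence of the moment and contractivity estimates already proved in Lemma~\ref{StableSY}.
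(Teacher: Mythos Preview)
Your argument is correct and is exactly what the paper has in mind: it states the corollary as an immediate consequence of \eqref{StableSY00} and \eqref{StableSY01} with no further proof. The circularity you flag is most cleanly bypassed by applying invariance of $\mu^x$ to the bounded function $|z|^{2p}\wedge R$, namely $\int(|z|^{2p}\wedge R)\,\mu^x(\d z)=\int\mbE\bigl(|Y_{0,t}^x(z)|^{2p}\wedge R\bigr)\,\mu^x(\d z)\le\int\bigl((|z|^{2p}\e^{-\eta pt/2}+M)\wedge R\bigr)\,\mu^x(\d z)$, then sending $t\to\infty$ (dominated convergence, since the integrand is bounded by $R$) and finally $R\to\infty$ (monotone convergence); this avoids any appeal to weak convergence of $\msL(Y_{0,t}^x(y))$ and hence the need for a priori moment information on $\mu^x$.
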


Now we show the continuous dependence on the parameter $x\in\R^{d_1}$ for stationary solutions to equation \eqref{fmain}.
\begin{lemma}\label{SSCx}
Assume that {\bf{(H$_y^1$)}} and {\bf{(H$_y^3$)}} hold. If $g\in C_b(\R^{d_1+d_2})$, then for any $x\in\R^{d_1}$ there exist a unique stationary solution $Y_t^x,~t\in\R$, to \eqref{fmain}, and a constant $C>0$ such that for any $x_1,x_2\in \R^{d_1}$
\begin{equation*}
\sup_{t\in\R}\mbE|Y_t^{x_1}-Y_t^{x_2}|^2\leq C|x_1-x_2|^2.
\end{equation*}
\end{lemma}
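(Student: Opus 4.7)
\medskip

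\noindent\textbf{Proof plan.} The strategy is to construct the stationary solution as a pullback $L^2$-limit along a fixed two-sided Brownian motion, and then establish the Lipschitz estimate by a two-step splitting that uses the contraction estimate \eqref{StableSY00} for the initial-data difference and part (iii) of Lemma \ref{StableSY} for the parameter difference, evaluated at the (random) stationary initial condition.

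For existence, I would fix $x\in\R^{d_1}$, a two-sided Brownian motion $W^2$, and $y\in\R^{d_2}$, and show that $\{Y_{s,t}^x(y)\}_{s\le t}$ is Cauchy in $L^2(\Omega)$ as $s\to-\infty$. Indeed, for $s_1<s_2\le t$, the cocycle property $Y_{s_1,t}^x(y)=Y_{s_2,t}^x(Y_{s_1,s_2}^x(y))$ combined with \eqref{StableSY00} gives
\[
\mbE\bigl|Y_{s_1,t}^x(y)-Y_{s_2,t}^x(y)\bigr|^2\le e^{-\eta(t-s_2)}\mbE\bigl|Y_{s_1,s_2}^x(y)-y\bigr|^2,
\]
and by \eqref{StableSY01} the factor $\mbE|Y_{s_1,s_2}^x(y)-y|^2$ is bounded by $C(1+|y|^2)$ uniformly in $s_1,s_2$. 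Define $Y_t^x:=L^2\text{-}\lim_{s\to-\infty}Y_{s,t}^x(y)$. Time-homogeneity of \eqref{fmain} and the Markov property then yield the representation $Y_t^x=Y_{s,t}^x(Y_s^x)$ for all $s\le t$, which both exhibits the process as stationary (with marginal law $\mu^x$) and gives the uniform moment bound $\sup_{t\in\R}\mbE|Y_t^x|^{2p}\le M$ via \eqref{StableSY01} and Corollary \ref{EIMeq}. Uniqueness is immediate from \eqref{StableSY00}: any two stationary solutions $Y_t^x,\widetilde Y_t^x$ with bounded second moments satisfy $\mbE|Y_t^x-\widetilde Y_t^x|^2\le e^{-\eta(t-s)}\mbE|Y_s^x-\widetilde Y_s^x|^2\to 0$ as $s\to-\infty$.

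For the Lipschitz estimate in $x$, using the representation $Y_t^{x_i}=Y_{s,t}^{x_i}(Y_s^{x_i})$, I would decompose
\[
Y_t^{x_1}-Y_t^{x_2}=\bigl[Y_{s,t}^{x_1}(Y_s^{x_1})-Y_{s,t}^{x_1}(Y_s^{x_2})\bigr]+\bigl[Y_{s,t}^{x_1}(Y_s^{x_2})-Y_{s,t}^{x_2}(Y_s^{x_2})\bigr].
\]
The first bracket, by \eqref{StableSY00}, is bounded in $L^2$-norm squared by $e^{-\eta(t-s)}\mbE|Y_s^{x_1}-Y_s^{x_2}|^2\le 4Me^{-\eta(t-s)}$. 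For the second bracket, $Y_s^{x_2}$ is $\msF_s$-measurable and hence independent of the Brownian increments driving $Y_{s,\cdot}^{x_i}$ on $[s,t]$; conditioning on $Y_s^{x_2}$ and applying part (iii) of Lemma \ref{StableSY} gives
\[
\mbE\bigl|Y_{s,t}^{x_1}(Y_s^{x_2})-Y_{s,t}^{x_2}(Y_s^{x_2})\bigr|^2\le C\,\mbE\bigl(1+|Y_s^{x_2}|^{2\kappa_2}\bigr)|x_1-x_2|^2\le C(1+M)|x_1-x_2|^2.
\]
Letting $s\to-\infty$ eliminates the first term and yields $\mbE|Y_t^{x_1}-Y_t^{x_2}|^2\le C|x_1-x_2|^2$ uniformly in $t\in\R$.

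The main technical point to verify is that the constant in Lemma \ref{StableSY}(iii) is genuinely uniform in $t-s$; this holds because in the final Gronwall step the inhomogeneous term is bounded by $C_1(1+|y|^{2\kappa_2}e^{-\frac{\eta\kappa_2}{2}(r-s)})|x_1-x_2|^2$, whose convolution with $e^{-\frac{\eta}{2}(t-r)}$ is uniformly bounded in $s,t$. The other delicate bookkeeping step is justifying that the pullback $L^2$-limit $Y_t^x$ is a stationary \emph{process} (not only that its marginals coincide with $\mu^x$), which follows from the above representation $Y_t^x=Y_{s,t}^x(Y_s^x)$ and the translation invariance of the law of $W^2$.
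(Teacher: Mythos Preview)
Your argument is correct and follows the same overall strategy as the paper: construct the stationary solution by a pullback limit using \eqref{StableSY00} and \eqref{StableSY01}, and then derive the Lipschitz-in-$x$ bound by combining the contraction estimate with part (iii) of Lemma~\ref{StableSY}.

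The only difference is in the decomposition for the Lipschitz estimate. You split $Y_t^{x_1}-Y_t^{x_2}$ into two pieces using the random stationary initial value $Y_s^{x_2}$, which forces you to apply Lemma~\ref{StableSY}(iii) with a random initial condition (via conditioning on $\msF_s$) and then to invoke the moment bound $\sup_{s}\mbE|Y_s^{x_2}|^{2\kappa_2}<\infty$ from Corollary~\ref{EIMeq}. The paper instead inserts the deterministic intermediate point $Y_{-n,t}^{x_i}(0)$ for $i=1,2$ and uses a three-term triangle inequality:
\[
\mbE|Y_t^{x_1}-Y_t^{x_2}|^2\le 3\mbE|Y_t^{x_1}-Y_{-n,t}^{x_1}(0)|^2+3\mbE|Y_{-n,t}^{x_1}(0)-Y_{-n,t}^{x_2}(0)|^2+3\mbE|Y_{-n,t}^{x_2}(0)-Y_t^{x_2}|^2.
\]
The two outer terms decay like $e^{-\eta(t+n)}$ by \eqref{StableSY00}, and the middle term is handled by Lemma~\ref{StableSY}(iii) with $y=0$, so no conditioning or higher-moment bound on the stationary law is needed. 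Both routes yield the same conclusion; the paper's is slightly cleaner, while yours makes the structure $Y_t^{x}=Y_{s,t}^{x}(Y_s^{x})$ more explicit.
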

\begin{proof}
It follows from \eqref{StableSY00} and \eqref{StableSY01} that for any $x\in\R^{d_1}$ there exists a unique stationary solution $Y_t^x,~t\in\R$, to \eqref{fmain}.
By Corollary \ref{EIMeq}, we have for all $-n\leq t$ and $x_1,x_2\in\R^{d_1}$
\begin{align*}
\mbE|Y_t^{x_1}-Y_t^{x_2}|^2
&
\leq 3\mbE|Y_t^{x_1}-Y_{-n,t}^{x_1}(0)|^2
+3\mbE|Y_{-n,t}^{x_1}(0)-Y_{-n,t}^{x_2}(0)|^2
+3\mbE|Y_{-n,t}^{x_2}(0)-Y_t^{x_2}|^2\\
&
\leq 6M{\rm e}^{-\eta(t+n)}+3C|x_1-x_2|^2,
\end{align*}
which implies that
\begin{equation*}
\mbE|Y_t^{x_1}-Y_t^{x_2}|^2\leq C|x_1-x_2|^2
\end{equation*}
by letting $n\rightarrow\infty$, where $C$ does not depend on $t$.
\end{proof}

Finally, we investigate the well-posedness of the Poisson equation.
Consider the following equation
\begin{equation}\label{Peqmian}
\mfL_2(x,y)u(h,x,y)=-\left(\phi(h,x,y)-\bar{\phi}(h,x)\right), \quad y\in\R^{d_2},
\end{equation}
where $(h,x)\in\R^{d+d_1}$ is a parameter,
$\bar{\phi}(h,x):=\int_{\R^{d_2}}\phi(h,x,y)\mu^x(\d y)$
and
$$
\mfL_2 u(h,x,y):=\mfL_2(x,y) u(h,x,y)
:=\langle B(x,y),\partial_y u(h,x,y)\rangle
+\sum_{i,j=1}^{d_2}a_{ij}\partial_{y_jy_i}^2u(h,x,y).
$$
Here $(a_{ij})=gg^T/2$. Similarly, for all $(h,x,y)\in\R^{d+d_1+d_2}$ we define
$$\mfL_1^\var\psi(h,x,y):=\mfL_1^\var(x,y)\psi(h,x,y):=\langle f(\var^{-\gamma}t,x,y),\partial_x\psi(h,x,y)\rangle
+\sum_{i,j=1}^{d_1} A_{ij}^\veps\partial_{x_jx_i}^2\psi(h,x,y),$$
$$\mfL_3\psi(h,x,y):=\mfL_3(x,y)\psi(h,x,y):=\langle b(x,y),\partial_y\psi(h,x,y)\rangle,$$
where $\left(A_{ij}^\veps\right)=\sigma_\veps\sigma_\veps^T/2$.
When $f$ and $\sigma$ are time independent, we let
$\left(A_{ij}\right):=\sigma\sigma^T/2$, and
$$\mfL_1\psi(h,x,y):=\mfL_1(x,y)\psi(h,x,y):=\langle f(x,y),\partial_x\psi(h,x,y)\rangle
+\sum_{i,j=1}^{d_1} A_{ij}\partial_{x_jx_i}^2\psi(h,x,y),$$
$$
\mfL_{\bar{x}}\psi(x,y):=\mfL_{\bar{x}}\psi(x,y):=\langle \bar{f}(x),\partial_x\psi(x,y)\rangle
+\sum_{i,j=1}^{d_1}A_{ij}\partial_{x_jx_i}^2\psi(x,y).
$$

Let us first introduce the condition {\bf(H$_\phi^i$)}, where $i=1,2$.
\begin{enumerate}
  \item [{\bf(H$_\phi^i$)}]
  There exist constants $C_1>0$ and $m_1,m_2,m_3\geq0$ such that for all $(h,x,y)\in\R^{d+d_1+d_2}$
  \begin{align*}
  \sum_{0\leq2k+j\leq 4}|\partial_x^k\partial_y^j\phi(h,x,y)|+|\partial_h^i\phi(h,x,y)|
  \leq C_1(1+|h|^{m_1}+|x|^{m_2}+|y|^{m_3}).
  \end{align*}
\end{enumerate}

Similar to \cite[Proposition 4.1]{SSWX2022} and \cite[Theorem 3.1]{CDGOS2022}, we have the following
lemma about the existence and uniqueness of solutions to \eqref{Peqmian}.
\begin{lemma}\label{SolPoisson}
Assume that {\bf(H$_y^1$)}--{\bf(H$_y^5$)} hold.
Furthermore, suppose that  $\phi\in C^{i,2,4}(\R^{d+d_1+d_2})$ satisfies {\bf(H$_\phi^i$)}.
Then there exist a unique solution $u(t,x,\cdot)\in C^{2}(\R^{d_2})$ to \eqref{Peqmian}
and constants $m'_1,m'_2,m_3',C>0$ such that
\begin{align*}
&
|u(h,x,y)|+|\partial_yu(h,x,y)|+\sum_{j=1}^i|\partial^j_hu(h,x,y)|\\\nonumber
&
+|\partial_xu(h,x,y)|+|\partial_x^2u(h,x,y)|
\leq C(1+|h|^{m'_1}+|x|^{m'_2}+|y|^{m'_3}),
\end{align*}
and
\begin{align*}
\sum_{j=1}^2\left|\partial_x^j\bar{\phi}(h,x)\right|
\leq C\left(1+|h|^{m_1'}+|x|^{m_2'}\right).
\end{align*}
\end{lemma}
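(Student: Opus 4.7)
The plan is to construct $u$ via the probabilistic representation
\[
u(h,x,y) := \int_0^\infty \mbE\bigl[\phi(h,x,Y_t^{x,y}) - \bar\phi(h,x)\bigr]\,\d t,
\]
where $Y_t^{x,y}$ is the frozen diffusion solving \eqref{fmain} with $Y_0^{x,y}=y$. First I would show that this improper integral converges exponentially. Using the invariance identity $\bar\phi(h,x) = \int_{\R^{d_2}}\mbE\phi(h,x,Y_t^{x,y'})\mu^x(\d y')$, the $L^2$ contraction \eqref{StableSY00} from Lemma \ref{StableSY}, the uniform moment bound for $\mu^x$ given in Corollary \ref{EIMeq}, and the polynomial growth imposed by {\bf(H$_\phi^i$)}, one bounds the integrand by $C(1+|h|^{m_1'}+|x|^{m_2'}+|y|^{m_3'})\e^{-\eta t/2}$. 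This both ensures $u$ is well-defined and yields its $L^\infty$ bound.

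Second, to verify that $u$ solves \eqref{Peqmian}, set $P_t^x\psi(y) := \mbE\psi(Y_t^{x,y})$, so that Kolmogorov's backward equation gives $\partial_t P_t^x\psi = \mfL_2 P_t^x\psi$. Taking $\psi(\cdot) := \phi(h,x,\cdot)-\bar\phi(h,x)$ and interchanging $\mfL_2$ with the time integral (justified by the uniform exponential bounds above), one gets
\[
\mfL_2(x,y)u(h,x,y) = \int_0^\infty \partial_t P_t^x\psi(y)\,\d t = -\bigl(\phi(h,x,y)-\bar\phi(h,x)\bigr),
\]
using $P_t^x\psi(y)\to0$ exponentially and $P_0^x\psi = \psi$. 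Uniqueness within the class of polynomially growing solutions follows from It\^o's formula applied to any second candidate together with the same ergodicity.

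Third, I would establish the derivative bounds. The $h$-derivatives $\partial_h^j u$, $j\leq i$, are obtained by differentiating under the integral, using {\bf(H$_\phi^i$)} to control $\partial_h^j\phi$ and repeating the previous exponential-decay argument. For $\partial_y u$, {\bf(H$_y^3$)}(i) gives exponential contraction of $\partial_y Y_t^{x,y}$ in mean square, and the polynomial growth of $\partial_y\phi$ from {\bf(H$_\phi^i$)} then yields the claimed bound. The bounds on $\partial_x^j\bar\phi$, $j=1,2$, follow once the analogous bounds on $\partial_x^j\mbE\phi(h,x,Y_t^{x,y})$ are in hand, either by sending $t\to\infty$ or by reading them off the identity $\mfL_2 u = -(\phi-\bar\phi)$ together with the bounds on $u$, $\partial_y u$ and $\partial_y^2 u$.

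The principal obstacle is controlling $\partial_x u$ and $\partial_x^2 u$. Differentiating the representation in $x$ introduces Jacobian contributions involving the first- and second-order sensitivity processes $\partial_x Y_t^{x,y}$ and $\partial_x^2 Y_t^{x,y}$, which satisfy linear SDEs whose coefficients are built from $\partial_y B$, $\partial_y g$, $\partial_x B$, $\partial_x g$ and their higher-order partners. Here {\bf(H$_y^4$)} plays a crucial role: its strict dissipativity with $\varsigma_1>8$ forces exponential decay of sufficiently high moments of these sensitivity processes, while {\bf(H$_y^5$)} supplies the polynomial bounds on the variational coefficients needed to close the estimates. Combining these with the moment control from \eqref{StableSY01} yields exponential-in-$t$ decay, with a polynomial prefactor in $(|h|,|x|,|y|)$, of each term in $\partial_x^j\mbE[\phi(h,x,Y_t^{x,y})-\bar\phi(h,x)]$ for $j=1,2$; integrating in $t$ completes the proof. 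The technical computations here are largely parallel to those of \cite{SSWX2022, CDGOS2022}, so I would focus mainly on the additional book-keeping required by the extra $h$-dependence.
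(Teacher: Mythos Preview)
Your proposal is correct and follows the standard probabilistic approach to the Poisson equation via the representation $u=\int_0^\infty P_t^x(\phi-\bar\phi)\,\d t$, which is precisely the route taken in the references \cite{SSWX2022,CDGOS2022} that the paper cites in lieu of a proof. The paper itself does not give an independent argument for this lemma, so your sketch in fact supplies more detail than the paper does; the only addition you note---tracking the extra parameter $h$---is straightforward since $h$ enters only through $\phi$ and not through the dynamics of $Y^{x,y}$.
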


\section{The first Bogolyubov theorem}

In Section 4.1, we will prove some lemmas, which give the properties for  $\hat{f},\bar{f}$ and $\bar{\sigma}$, moment estimates of solutions to \eqref{sfmain} and \eqref{avemain}, and H\"older continuity of the slow variable in \eqref{sfmain}.
In Section 4.2, we prove Theorem \ref{FAPth}.

\subsection{Auxiliary lemmas}

In the following lemma, we show that $\hat{f},\bar{f}$ and $\bar{\sigma}$
inherit some properties from $f$ and $\sigma$.
\begin{lemma}\label{aveCCL}
Assume that $f\in C^{1,2,4}(\R^{1+d_1+d_2})$ and $\sigma$ satisfy {\bf(H$_x^1$)}--{\bf(H$_x^3$)}, {\bf(A$_f$)} and {\bf(A$_\sigma$)}.
Furthermore, suppose that {\bf(H$_y^1$)} and {\bf(H$_y^3$)} hold. Then the following conclusions hold.
\begin{enumerate}
  \item If $g\in C_b(\R^{d_1+d_2})$ then $\hat{f}$ and $\bar{f}$ satisfy {\bf(H$_x^1$)},
      $\bar{\sigma}$ satisfies {\bf(H$_x^3$)}
      and there exists $C>0$ such that for all $t\in\R$ and $x_1,x_2\in\R^{d_1}$
\begin{equation*}
\left|\hat{f}(t,x_1)-\hat{f}(t,x_2)\right|
+\left|\bar{f}(x_1)-\bar{f}(x_2)\right|
\leq C\left(1+|x_1|^{\theta_1}+|x_2|^{\theta_1}\right)|x_1-x_2|,
\end{equation*}
where $\theta_1$ is as in {\bf(H$_x^2$)}.
  \item If {\bf(H$_y^2$)} and {\bf(H$_y^4$)}--{\bf(H$_y^5$)} hold then $\hat{f}\in C^{2}(\R^{d_1})$ and $\hat{f}$ satisfies {\bf(H$_x^2$)}.
\end{enumerate}
\end{lemma}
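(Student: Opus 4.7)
The plan is to split the proof into four steps: inheriting (H$_x^1$) by $\hat{f}$ and $\bar{f}$, inheriting (H$_x^3$) by $\bar{\sigma}$, the locally Lipschitz estimate of the difference, and finally part (2). First, I would integrate the pointwise bound $2\langle f(t,x,y),x\rangle+|\sigma(t,x)|^2\leq K_4(1+|x|^2)+K_5|y|^{\theta}$ against $\mu^x(\d y)$; the universal moment bound $\sup_{x}\int_{\R^{d_2}}|y|^{\theta}\mu^x(\d y)<M$ from Corollary~\ref{EIMeq} (which uses $g\in C_b$, (H$_y^1$), (H$_y^3$)) absorbs the $y$-term into a larger constant times $(1+|x|^2)$, giving (H$_x^1$) for $\hat{f}$. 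For $\bar{f}$, I would time-average both sides; Jensen's inequality gives $|\bar{\sigma}(x)|^2\leq\limsup_{T\to\infty}\frac{1}{T}\int_t^{t+T}|\sigma(s,x)|^2\,\d s$, so the dissipative inequality survives the averaging. The analogous statement for $\bar{\sigma}$ in (H$_x^3$) is direct: applying the pointwise Lipschitz bound $|\sigma(s,x_1)-\sigma(s,x_2)|_{HS}\leq L_\sigma|x_1-x_2|$ inside the time average defining $\bar{\sigma}$, and using $|\sigma(s,0)|\leq K_7$ for the constant term, transfers both inequalities to $\bar{\sigma}$ with the same constants.

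The hardest step will be the locally Lipschitz estimate, because $\mu^{x_1}\neq\mu^{x_2}$ forbids a naive difference under a single integral. To overcome this I would couple through the stationary solutions of the frozen equation provided by Lemma~\ref{SSCx}, so that $\hat{f}(t,x)=\mbE f(t,x,Y_0^x)$ for both arguments on a common probability space. Applying the locally Lipschitz bound of Remark~\ref{Rem01}(ii) yields
\[
|\hat{f}(t,x_1)-\hat{f}(t,x_2)|\leq \mbE\bigl[C(1+|x_1|^{\theta_1}+|x_2|^{\theta_1}+|Y_0^{x_1}|^{\theta_2}+|Y_0^{x_2}|^{\theta_2})(|x_1-x_2|+|Y_0^{x_1}-Y_0^{x_2}|)\bigr].
\]
Cauchy--Schwarz combined with the uniform $2\theta_2$-th moment bound from Corollary~\ref{EIMeq} and the quantitative estimate $\sup_t\mbE|Y_t^{x_1}-Y_t^{x_2}|^2\leq C|x_1-x_2|^2$ from Lemma~\ref{SSCx} then produces the announced Lipschitz-type bound for $\hat{f}$, and a further time-average delivers it for $\bar{f}$.

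For part (2), I would recognise that under (H$_x^2$) the function $\phi:=f$ (with $h:=t$) belongs to the class $C^{1,2,4}$ and satisfies (H$_\phi^{1}$) with $m_1=0$, $m_2=\theta_1$, $m_3=\theta_2$. Invoking Lemma~\ref{SolPoisson} with this $\phi$ immediately gives the $C^2$ regularity in $x$ of $\hat{f}=\bar{\phi}$ together with polynomial bounds $|\partial_x^{j}\hat{f}(t,x)|\leq C(1+|x|^{m_2'})$ for $j=1,2$; the zeroth-order bound $|\hat{f}(t,x)|\leq C(1+|x|^{\theta_1})$ follows from (H$_x^2$) and Corollary~\ref{EIMeq}. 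For $|\partial_t\hat{f}|$, I would differentiate $\hat{f}(t,x)=\int_{\R^{d_2}}f(t,x,y)\mu^x(\d y)$ under the integral, the interchange being justified by the pointwise bound on $\partial_t f$ in (H$_x^2$) together with the moment bound for $\mu^x$. The only genuinely delicate ingredient in the whole proof is the stationary coupling used in the local Lipschitz step; once that is in hand, (2) reduces cleanly to the already-established Lemma~\ref{SolPoisson}.
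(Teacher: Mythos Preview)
Your proposal is correct and follows essentially the same route as the paper: integrate the (H$_x^1$) inequality against $\mu^x$ using Corollary~\ref{EIMeq}, pass to the time average via (A$_f$), derive the local Lipschitz bound by coupling through the stationary solutions $Y^{x_1},Y^{x_2}$ of Lemma~\ref{SSCx} combined with Remark~\ref{Rem01}(ii), H\"older and Corollary~\ref{EIMeq}, and then invoke Lemma~\ref{SolPoisson} and differentiation under the integral for part~(ii). The only cosmetic differences are that the paper splits $\hat f(t,x_1)-\hat f(t,x_2)$ into a ``same-measure'' and a ``change-of-measure'' piece before coupling, whereas you write $\hat f(t,x)=\mbE f(t,x,Y_0^x)$ at once, and the paper handles $\langle\bar f(x),x\rangle$ and $|\bar\sigma(x)|^2$ separately rather than via your Jensen argument; both variants are equivalent.
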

\begin{proof}
(i)
We note that
\begin{align*}
\hat{f}(t,x)=\int_{\R^{d_2}}f(t,x,y)\mu^x(\d y),
~\forall (t,x)\in\R^{1+d_1}.
\end{align*}
In view of {\bf(H$_x^1$)} and Corollary \ref{EIMeq}, we have for any $(t,x)\in\R^{1+d_1}$
\begin{align*}
\langle \hat{f}(t,x),x\rangle
&
=\left\langle \int_{\R^{d_2}}f(t,x,y)\mu^x(\d y),x\right\rangle\\
&
\leq\int_{\R^{d_2}}\left(K_4(1+|x|^2)+K_5|y|^{\theta}\right)\mu^x(\d y)
\leq K_4(1+|x|^2)+C.
\end{align*}
Therefore, for any $t\in\R$ and $x\in\R^{d_1}$, one sees that
\begin{align*}
\langle \bar{f}(x),x\rangle
&
=\left\langle \bar{f}(x)-\frac{1}{T}\int_0^T\hat{f}(s,x)\d s,x\right\rangle
+\left\langle\frac1T\int_{0}^{T}\hat{f}(s,x)\d s,x\right\rangle\\
&
\leq \left|\bar{f}(x)-\frac1T\int_{0}^{T}\hat{f}(s,x)\d s\right||x|+K_4\left(1+|x|^2\right)+C,
\end{align*}
which implies that
\begin{equation*}
\langle \bar{f}(x),x\rangle\leq K_4\left(1+|x|^2\right)+C,
~\forall x\in\R^{d_1}
\end{equation*}
by letting $T\rightarrow\infty$ and because of Remark \ref{Rem01} (iii).

Combing {\bf(H$_x^2$)}, Corollary \ref{EIMeq},
H\"older's inequality and Lemma \ref{SSCx},
we have for any $t\in\R$ and $x_1,x_2\in\R^{d_1}$
\begin{align}
&
\left|\hat{f}(t,x_1)-\hat{f}(t,x_2)\right|\nonumber\\\nonumber
&
=\left|\int_{\R^{d_2}}f(t,x_1,y)\mu^{x_1}(\d y)
-\int_{\R^{d_2}}f(t,x_2,y)\mu^{x_2}(\d y)\right|\\\nonumber
&
\leq\int_{\R^{d_2}}\left|f(t,x_1,y)-f(t,x_2,y)\right|\mu^{x_1}(\d y)
+\left|\int_{\R^{d_2}}f(t,x_2,y)\left(\mu^{x_1}(\d y)-\mu^{x_2}(\d y)\right)\right|
\\
&
\leq C\left(1+|x_1|^{\theta_1}+|x_2|^{\theta_1}\right)|x_1-x_2|\label{aveCCL0808}\\\nonumber
&\quad
+C\mbE\left((1+|x_2|^{\theta_1}+|Y_s^{x_1}|^{\theta_2}
+|Y_s^{x_2}|^{\theta_2})
|Y_s^{x_1}-Y_s^{x_2}|\right)\\\nonumber
&
\leq C\left(1+|x_1|^{\theta_1}+|x_2|^{\theta_1}\right)|x_1-x_2|,
\end{align}
where $s\in\R$, $Y_\cdot^{x_1}$ and $Y_\cdot^{x_2}$ are stationary solutions to
\eqref{fmain} with $x_1$ and $x_2$ respectively  replacing $x$.
Then \eqref{aveCCL0808} and {\bf(A$_f$)} imply that for all $x_1,x_2\in\R^{d_1}$
\begin{equation*}
|\bar{f}(x_1)-\bar{f}(x_2)|\leq
C\left(1+|x_1|^{\theta_1}+|x_2|^{\theta_1}\right)|x_1-x_2|.
\end{equation*}

It follows from {\bf(H$_x^3$)} and {\bf(A$_\sigma$)} that $\bar{\sigma}$ satisfies {\bf(H$_x^3$)}.

(ii)
By {\bf(H$_x^2$)} and Lemma \ref{SolPoisson}, one sees that there exists $m_1>0$ such that
\begin{equation*}\label{aveCCL08}
|\partial^i_x\hat{f}(t,x)|\leq K_6\left(1+|x|^{m_1}+C\right), \quad i=0,1,2.
\end{equation*}
It follows from {\bf(H$_x^2$)} that for all $(t,x)\in\R^{1+d_1}$
\begin{align*}
\left|\partial_t\hat{f}(t,x)\right|
&
\leq \int_{\R^{d_2}}\left|\partial_t f(t,x,y)\right|\mu^x(\d y)
\leq K_6\left(1+|x|^{\theta_1}\right)+C.
\end{align*}
\end{proof}

\begin{remark}\label{EUASDE}\rm
Assume that {\bf(H$_y^1$)}, {\bf(H$_y^3$)},
{\bf(H$_x^1$)}--{\bf(H$_x^3$)}, {\bf(A$_f$)} and {\bf(A$_\sigma$)} hold. If $g\in C_b(\R^{d_1+d_2})$,
then it follows from the above lemma and Theorem 3.1.1 in \cite{LR2015}
that for any $\zeta\in\mcL^2(\Omega,\msF_0,\mbP;\R^{d_1})$ there exists a unique
solution $\bar{X}_t(\zeta)$ to \eqref{avemain} satisfying $\bar{X}_0(\zeta)=\zeta$.
\end{remark}

Now we prove moment estimates for solutions to the slow-fast SDEs \eqref{sfmain} and the averaged equation \eqref{avemain}.
\begin{lemma}\label{Pest}
Assume that {\bf(H$_y^1$)}, {\bf(H$_y^3$)} and {\bf(H$_x^1$)}--{\bf(H$_x^3$)} hold. Let $(X_t^\veps(\zeta^\veps),Y_t^\veps(\xi^\veps)),t\geq0$,
be the solution to \eqref{sfmain}, and $\bar{X}_t(\zeta),t\geq0$, the solution to \eqref{avemain}.
If $g\in C_b(\R^{d_1+d_2})$, then there exists a constant $0<\veps_0\leq 1$ such that for all $p\geq1$, $T>0$ and $0<\veps\leq\veps_0$
\begin{equation}\label{PestY}
\mbE\left(\sup_{t\in[0,T]}|Y_t^\veps(\xi^\veps)|^{2p}\right)
\leq C_{p,T}\left(1+\mbE|\xi^\veps|^{2p}\right),
\end{equation}
\begin{equation}\label{PestX}
\mbE\left(\sup_{t\in[0,T]}|X_t^\veps(\zeta^{\veps})|^{2p}\right)\leq C_{p,T}\left(1+\mbE|\zeta^\veps|^{2p}+\mbE|\xi^\veps|^{\theta p}\right)
\end{equation}
and
\begin{equation}\label{PestAX}
\mbE\left(\sup_{t\in[0,T]}|\bar{X}_t(\zeta)|^{2p}\right)\leq C_{p,T}\left(1+\mbE|\zeta|^{2p}\right),
\end{equation}
where $\theta$ is as in {\bf(H$_x^1$)},
and $C_{p,T}$ is independent of $\veps$.

\end{lemma}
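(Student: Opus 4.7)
My plan is to prove the three bounds sequentially, starting with $Y^\veps$, then using that bound to handle $X^\veps$, and finally treating $\bar X$ by essentially the same technique as $X^\veps$. The common tool is It\^o's formula followed by dissipativity, BDG, Young's inequality and Gronwall.

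For \eqref{PestY}, apply It\^o's formula to $|Y^\veps_t|^{2p}$ and group the $\veps^{-2\alpha}$--scaled contributions. The It\^o correction from the $\veps^{-\alpha}g\,dW^2$ noise combines with the $\veps^{-2\alpha}B$--drift to produce the factor $p|Y|^{2p-2}[2\langle B,Y\rangle+|g|^2_{HS}]\veps^{-2\alpha}$, which \textbf{(H$_y^1$)} bounds by $\veps^{-2\alpha}(-\eta p|Y|^{2p}+K_1 p|Y|^{2p-2})$. The auxiliary It\^o term $2p(p-1)\veps^{-2\alpha}|Y|^{2p-4}|g^TY|^2$ is controlled using $g\in C_b$, while the $\veps^{-\beta}$--scaled $b$--term is bounded by the second inequality of \textbf{(H$_y^1$)}; since $\beta<2\alpha$, for $\veps\le\veps_0$ sufficiently small it is dominated by half of the dissipation. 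Young's inequality absorbs the remaining $|Y|^{2p-2}$ contributions, giving
\begin{equation*}
d|Y^\veps_t|^{2p}\le \veps^{-2\alpha}(-c_p|Y^\veps_t|^{2p}+C_p)\,dt+dM_t,
\end{equation*}
where $M$ is a local martingale with $d\langle M\rangle_t\le C_p\veps^{-2\alpha}|Y^\veps_t|^{4p-2}dt$. Taking expectation and invoking Gronwall yields the pointwise-in-$t$ moment bound; then BDG together with Young (writing $|Y|^{4p-2}=|Y|^{2p}\cdot|Y|^{2p-2}$ and absorbing $\tfrac12\mbE\sup|Y|^{2p}$ into the left-hand side) upgrades this to the estimate \eqref{PestY} with the supremum inside the expectation.

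For \eqref{PestX}, apply It\^o's formula to $|X^\veps_t|^{2p}$. By \textbf{(H$_x^1$)} the drift plus It\^o--correction $p|X|^{2p-2}[2\langle f,X\rangle+|\sigma|^2]$ is dominated by $Cp|X|^{2p-2}(1+|X|^2+|Y|^{\theta})$, and Young's inequality turns this into $C(1+|X|^{2p}+|Y|^{\theta p})$. BDG on the $\sigma$--martingale plus Young absorb $\tfrac12\mbE\sup|X|^{2p}$; the remaining term $\mbE\int_0^T|Y^\veps_s|^{\theta p}\,ds$ is handled by invoking \eqref{PestY} with $2p$ replaced by $\theta p$. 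A final Gronwall application delivers \eqref{PestX}. For \eqref{PestAX}, Lemma~\ref{aveCCL}(i) ensures that $\bar f$ inherits \textbf{(H$_x^1$)} (the $|y|^\theta$ term being averaged against $\mu^x$ to yield a pure constant via Corollary~\ref{EIMeq}) and $\bar\sigma$ inherits \textbf{(H$_x^3$)}, so the same It\^o+BDG+Young+Gronwall scheme used for $X^\veps$ applies directly without any $Y$--dependence.

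The main technical obstacle lies in the careful bookkeeping of the $\veps$--scalings in the first step: one must verify that the $\veps^{-\beta}$ drift contribution from $b$ is dominated by the $\veps^{-2\alpha}$ dissipation for $\veps\le\veps_0$ (exploiting $\beta<2\alpha$), and that the $\veps^{-\alpha}$--scaled noise produces a martingale whose quadratic variation, after BDG and the Young split $|Y|^{4p-2}=|Y|^{2p}\cdot|Y|^{2p-2}$, can be absorbed by half of $\mbE\sup|Y|^{2p}$. Once the $Y$--bound is established with the correct dependence on the initial data, the $X^\veps$ and $\bar X$ bounds are routine consequences.
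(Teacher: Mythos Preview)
Your treatment of $X^\veps$ and $\bar X$ is fine and matches the paper essentially line for line. The gap is in the $Y^\veps$ step, specifically in the claim that BDG together with the split $|Y|^{4p-2}=|Y|^{2p}\cdot|Y|^{2p-2}$ yields an $\veps$--uniform supremum bound. After BDG and Young you are left not only with $\tfrac12\,\mbE\sup|Y^\veps|^{2p}$ (which is harmless) but also with
\[
C\,\veps^{-2\alpha}\,\mbE\int_0^T |Y^\veps_s|^{2p-2}\,\d s .
\]
Your pointwise Gronwall bound gives $\sup_{s\ge0}\mbE|Y^\veps_s|^{2p-2}\le C$ uniformly in $\veps$, but that only yields $C\,\veps^{-2\alpha}T$ for the displayed term, which blows up as $\veps\to0$. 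Trying to absorb it into the dissipative drift $-c_p\veps^{-2\alpha}\int_0^t|Y^\veps_s|^{2p}\,\d s$ does not help either, because the accompanying constant source term in the drift is itself of order $\veps^{-2\alpha}T$. In short, the $\veps^{-\alpha}$ scaling on the fast noise makes the standard BDG--absorption scheme produce $\veps$--dependent constants, so the lemma's key requirement that $C_{p,T}$ be independent of $\veps$ fails along this route.

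The paper avoids BDG for $Y^\veps$ altogether. It first shows, via the same It\^o computation and Gronwall, that $\mbE|Y^\veps_\tau|^{2p}\le C_{p,T}(1+\mbE|\xi^\veps|^{2p})$ for \emph{every} stopping time $\tau\le T$, with $C_{p,T}$ independent of $\veps$ (the $\veps^{-2\alpha}$ decay rate exactly balances the $\veps^{-2\alpha}$ constant source). It then passes to the supremum by a Lenglart--type argument: writing $\mbE(\sup_{[0,T]}|Y^\veps|^{2p})^{\alpha}=\alpha\int_0^\infty r^{\alpha-1}\mbP(\sup>r)\,\d r$ and bounding $\mbP(\sup>r)\le r^{-1}\mbE|Y^\veps_{\tau_r}|^{2p}$ with the first hitting time $\tau_r$, one integrates to obtain $\mbE(\sup|Y^\veps|^{2p})^{\alpha}\le C_{p,T}(1+\mbE|\xi^\veps|^{2p})^{\alpha}$ for any $0<\alpha<1$, which suffices since $p\ge1$ is arbitrary. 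You should replace your BDG step for $Y^\veps$ by this stopping--time/tail--integral argument.
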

\begin{proof}
Employing It\^o's formula and {\bf(H$_y^1$)}, we have
\begin{align*}
|Y_t^\veps(\xi^\veps)|^{2p}
&
=|\xi^\veps|^{2p}+p\int_0^t|Y_s^\veps(\xi^\veps)|^{2p-2}
\Bigg(2\langle \veps^{-2\alpha}B(X_s^\veps(\zeta^\veps),Y_s^\veps(\xi^\veps)),Y_s^\veps(\xi^\veps)\rangle\\
&\qquad
+2\langle\veps^{-\beta}b(X_s^\veps(\zeta^\veps),Y_s^\veps(\xi^\veps)),Y_s^\veps(\xi^\veps)\rangle
+\veps^{-2\alpha}|g(X_s^\veps(\zeta^\veps),Y_s^\veps(\xi^\veps))|_{HS}^2\Bigg)\d s\\
&\quad
+2p(p-1)\veps^{-2\alpha}\int_0^t|Y_s^\veps(\xi^\veps)|^{2p-4}|g^T(X_s^\veps(\zeta^\veps),Y_s^\veps(\xi^\veps))Y_s^\veps(\xi^\veps)|^2\d s\\
&\quad
+2p\veps^{-\alpha}\int_0^t|Y_s^\veps(\xi^\veps)|^{2p-2}\langle Y_s^\veps(\xi^\veps),g(X_s^\veps(\zeta^\veps),Y_s^\veps(\xi^\veps))\d W_s^2\rangle\\
&
\leq|\xi^\veps|^{2p}+p\int_0^t-\veps^{-2\alpha}\left(\eta-\veps^{2\alpha-\beta}\widetilde{\eta}\right)|Y_s^\veps(\xi^\veps)|^{2p}
\d s\\
&\quad
+p\int_0^t\left(\veps^{-2\alpha}K_1+\veps^{-\beta}K_1+2(p-1)
\veps^{-2\alpha}\|g\|_\infty\right)|Y_s^\veps(\xi^\veps)|^{2p-2}\d s\\
&\quad
+2p\veps^{-\alpha}\int_0^t|Y_s^\veps(\xi^\veps)|^{2p-2}\langle Y_s^\veps(\xi^\veps),g(X_s^\veps(\zeta^\veps),Y_s^\veps(\xi^\veps))\d W_s^2\rangle.
\end{align*}
Note that there exists a constant $0<\veps_0\leq 1$ such that $\eta-\veps^{2\alpha-\beta}\widetilde{\eta}>\frac\eta2$ for all $0<\veps\leq\veps_0$. Then by Young's inequality,
one sees that for any $0<\veps\leq\veps_0$
\begin{align}
|Y_t^\veps(\xi^\veps)|^{2p}
&
\leq|\xi^\veps|^{2p}+p\int_0^t-\frac14\veps^{-2\alpha}\eta|Y_s^\veps(\xi^\veps)|^{2p}\d s\nonumber\\
&\quad
+2p\veps^{-\alpha}\int_0^t|Y_s^\veps(\xi^\veps)|^{2p-2}\langle Y_s^\veps(\xi^\veps),g(X_s^\veps(\zeta^\veps),Y_s^\veps(\xi^\veps))\d W_s^2\rangle \label{PestYB}\\\nonumber
&\quad
+\int_0^t\left(\frac{p}{4(p-1)}\veps^{-2\alpha}\eta\right)^{1-p}\left(\veps^{-2\alpha}K_1+\veps^{-\beta}K_1+2(p-1)\veps^{-2\alpha}\|g\|_\infty\right)^p\d s,
\end{align}
which implies that for any stopping time $\tau\leq T$
\begin{align}\label{qqqq}
\mbE|Y_\tau^\veps(\xi^\veps)|^{2p}
&
\leq \mbE\left(|\xi^\veps|^{2p}\e^{-\frac14\veps^{-2\alpha}p\eta\tau}\right)
+C_{p,T,\eta,\|g\|_\infty}
\leq C_{p,T}\left(1+\mbE|\xi^\veps|^{2p}\right).
\end{align}

Define $\tau_r^\veps:=\inf\{t\geq0:|Y_t^\veps(\xi^\veps)|^{2p}>r\}\wedge T$.
In view of \eqref{qqqq}, for any $0<\alpha<1$ we have
\begin{align*}
\mbE\left(\sup_{t\in[0,T]}|Y_t^\veps(\xi^\veps)|^{2p}\right)^{\alpha}
&
=\alpha\int_0^\infty r^{\alpha-1}\mbP\left(\sup_{t\in[0,T]}
|Y_t^\veps(\xi^\veps)|^{2p}>r\right)\d r\\
&
\leq\alpha\int_0^\infty r^{\alpha-1}\left(1\wedge r^{-1}\mbE|Y_{\tau_r^\veps}^\veps(\xi^\veps)|^{2p}\right)\d r\\
&
\leq\alpha\int_0^\infty r^{\alpha-1}\left(1\wedge r^{-1}C_{p,T}\left(1+\mbE|\xi^\veps|^{2p}\right)\right)\d r\\
&
\leq \alpha C_{p,T}\left(1+\mbE|\xi^\veps|^{2p}\right)^{\alpha}\int_0^\infty
\lambda^{\alpha-1}\left(1\wedge\lambda^{-1}\right)\d\lambda\\
&
\leq C_{p,T}\left(1+\mbE|\xi^\veps|^{2p}\right)^\alpha
\end{align*}
by the change of variables
$r\mapsto \left[C_{p,T}\left(1+\mbE|\xi^\veps|^{2p}\right)\right]\lambda$.

Note that
\begin{align*}
X_t^\veps(\zeta^\veps)=\zeta^\veps+\int_0^tf_\veps(s,X_s^\veps(\zeta^\veps),Y_s^\veps(\xi^\veps))\d s
+\int_0^t\sigma_\veps(s,X_s^\veps(\zeta^\veps))\d W_s^1.
\end{align*}
It follows from It\^o's formula and {\bf(H$_x^1$)} that
\begin{align*}
|X_t^\veps(\zeta^\veps)|^{2p}
&
\leq|\zeta^\veps|^{2p}+p\int_0^t|X_s^\veps(\zeta^\veps)|^{2p-2}
\left(K_4(1+|X_s^\veps(\zeta^\veps)|^2)+K_5|Y_s^\veps(\xi)|^{\theta}\right)\d s\\
&\quad
+2p\int_0^t|X_s^\veps(\zeta^\veps)|^{2p-2}\langle X_s^\veps(\zeta^\veps),\sigma_\veps(s,X_s^\veps(\zeta^\veps))\d W_s^1\rangle\\
&\quad
+2p(p-1)\int_0^t|X_s^\veps(\zeta^\veps)|^{2p-2}|\sigma_\veps(s,X_s^\veps(\zeta^\veps))|^2\d s.
\end{align*}
Then by Burkholder-Davis-Gundy's inequality and Young's inequality, one sees that
\begin{align*}
&
\mbE\left(\sup_{t\in[0,T]}|X_t^\veps(\zeta^\veps)|^{2p}\right)\\\nonumber
&
\leq\mbE|\zeta^\veps|^{2p}+p\mbE\int_0^T|X_s^\veps(\zeta^\veps)|^{2p-2}
\left(K_4(1+|X_s^\veps(\zeta^\veps)|^2)+K_5|Y_s^\veps(\xi)|^{\theta}\right)\d s\\\nonumber
&\quad
+\frac12\mbE\left(\sup_{t\in[0,T]}|X_t^\veps(\zeta^\veps)|^{2p}\right)
+C_T\mbE\int_0^T\left(|X_s^\veps(\zeta^\veps)|^{2p}+1\right)\d s\\\nonumber
&\quad
+2p(p-1)\mbE\int_0^T|X_s^\veps(\zeta^\veps)|^{2p-2}\left(2L_\sigma^2|X_s^\veps(\zeta^\veps)|^2+C\right)\d s,
\end{align*}
which by \eqref{PestY} and Gronwall's inequality implies that
\begin{equation*}
\mbE\left(\sup_{t\in[0,T]}|X_t^\veps(\zeta^\veps)|^{2p}\right)
\leq C_{p,T}\left(1+\mbE|\zeta^\veps|^{2p}+\mbE|\xi|^{\theta p}\right).
\end{equation*}

Similarly, we have
\begin{equation*}
\mbE\left(\sup_{t\in[0,T]}|\bar{X}_t(\zeta)|^{2p}\right)\leq C_{p,T}\left(1+\mbE|\zeta|^{2p}\right).
\end{equation*}
\end{proof}

Finally, we prove the H\"older continuity of $X_t^\veps,t\geq0$.
\begin{lemma}\label{timecon}
Assume that {\bf(H$_y^1$)}, {\bf(H$_y^3$)} and {\bf(H$_x^1$)}--{\bf(H$_x^3$)} hold. If $g\in C_b(\R^{d_1+d_2})$, then there exists a constant $C>0$ such that for all $0\leq s\leq t\leq T$
\begin{equation}\label{timeconX}
\mbE|X_{t}^\veps(x)-X_s^\veps(x)|^2\leq C_{T,|x|,|y|}|t-s|.
\end{equation}
\end{lemma}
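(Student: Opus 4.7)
The proof is a direct computation based on the integral form of the slow equation, and I expect no serious obstacle; the estimates follow from the sublinear / polynomial growth of $f$ and $\sigma$ combined with the moment bounds already established in Lemma \ref{Pest}. I would start by writing
\[
X_t^\veps(x)-X_s^\veps(x)=\int_s^t f(\veps^{-\gamma}r,X_r^\veps(x),Y_r^\veps(y))\,\d r+\int_s^t\sigma(\veps^{-\gamma}r,X_r^\veps(x))\,\d W^1_r,
\]
and then splitting $\mbE|X_t^\veps(x)-X_s^\veps(x)|^2$ into a drift term and a diffusion term via the elementary inequality $(a+b)^2\leq 2a^2+2b^2$.

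For the drift term, I would apply the Cauchy--Schwarz inequality in time to pull out a factor $t-s$, obtaining
\[
\mbE\left|\int_s^t f(\veps^{-\gamma}r,X_r^\veps,Y_r^\veps)\,\d r\right|^2\leq (t-s)\int_s^t\mbE\left|f(\veps^{-\gamma}r,X_r^\veps,Y_r^\veps)\right|^2\d r.
\]
The polynomial growth of $f$ guaranteed by \textbf{(H$_x^2$)} (cf.\ Remark \ref{Rem01}(ii)) yields $|f|^2\leq C(1+|x|^{2\theta_1}+|y|^{2\theta_2})$, and then Lemma \ref{Pest} bounds $\mbE|X_r^\veps(x)|^{2\theta_1}$ and $\mbE|Y_r^\veps(y)|^{2\theta_2}$ uniformly in $r\in[0,T]$ and $\veps\in(0,\veps_0]$ by a constant $C_{T,|x|,|y|}$. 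This gives a drift contribution of order $(t-s)^2\leq T(t-s)$.

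For the diffusion term, It\^o's isometry reduces it to
\[
\mbE\int_s^t\left|\sigma(\veps^{-\gamma}r,X_r^\veps(x))\right|_{HS}^2\d r,
\]
and the Lipschitz-plus-bounded-at-zero assumption \textbf{(H$_x^3$)} gives $|\sigma(t,x)|_{HS}^2\leq 2L_\sigma^2|x|^2+2K_7^2$. Invoking once more the moment bound $\mbE|X_r^\veps(x)|^2\leq C_{T,|x|,|y|}$ from Lemma \ref{Pest}, this term is dominated by $C_{T,|x|,|y|}(t-s)$. Adding the two contributions yields \eqref{timeconX}; the only (purely cosmetic) point is to verify that all constants coming from Lemma \ref{Pest} are independent of $\veps\in(0,\veps_0]$, which is exactly what is stated there.
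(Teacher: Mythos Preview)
Your proposal is correct and follows essentially the same route as the paper's proof: split into drift and diffusion parts, apply Cauchy--Schwarz in time to the drift integral and It\^o's isometry (the paper cites Burkholder--Davis--Gundy, which amounts to the same thing here) to the stochastic integral, then control the resulting moments via the polynomial growth in \textbf{(H$_x^2$)}, the linear growth in \textbf{(H$_x^3$)}, and the uniform-in-$\veps$ moment bounds \eqref{PestY}--\eqref{PestX} from Lemma~\ref{Pest}.
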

\begin{proof}
It follows from It\^o's formula, the Burkholder-Davis-Gundy inequality,  {\bf(H$_x^2$)}, {\bf(H$_x^3$)}, \eqref{PestY} and \eqref{PestX} that
\begin{align*}
&
\mbE|X_{t}^\veps(x)-X_s^\veps(x)|^2\\
&
\leq2\mbE\left|\int_s^{t}f_\veps(r,X_r^\veps(x),Y_r^\veps(y))\d r\right|^2
+2\mbE\left|\int_s^{t}\sigma_\veps(r,X_r^\veps(x))\d W_r^1\right|^2\\
&
\leq|t-s|
\int_s^{t}\mbE\left|f_\veps(r,X_r^\veps(x),Y_r^\veps(y))\right|^2\d r
+4\mbE\int_s^{t}\left|\sigma_\veps(r,X_r^\veps(x))\right|_{HS}^2\d r\\
&
\leq |t-s|C
\int_s^{t}\mbE\left(1+|X_r^\veps(x)|^{2\theta_1}+|Y_r^\veps(y)|^{2\theta_2}\right)\d r
+C\mbE\int_s^{t}\left(1+|X_r^\veps(x)|^2\right)\d r\\
&
\leq C_{T,|x|,|y|}|t-s|.
\end{align*}
\end{proof}

\subsection{Proof of Theorem \ref{FAPth}}
Now we are in a position to prove Theorem \ref{FAPth}.
\begin{proof}
For brevity, we define $X_t^\veps:=X_t^\veps(x)$, $Y_t^\veps:=Y_t^\veps(y)$ and $\bar{X}_t:=\bar{X}_t(x)$ for all $t\geq0$ in this proof.
By It\^o's formula and the Burkholder-Davis-Gundy inequality, we have
\begin{align}
&
\mbE|X_t^\var-\bar{X}_t|^2\nonumber\\
&
\leq2\mbE\left|\int_0^t\left(f_\var(s,X_s^\var,Y_s^\var)
-\bar{f}(\bar{X}_s)\right)\d s\right|^2
+2\mbE\left|\int_0^t\left(\sigma_\veps(s,X_s^\var)-\bar{\sigma}(\bar{X}_s)\right)\d W^1_s\right|^2\nonumber\\
&
\leq4\mbE\left|\int_0^t\left(f_\var(s,X_s^\var,Y_s^\var)-\hat{f}_\veps(s,X_s^\veps)\right)\d s\right|^2
+4\mbE\left|\int_0^t\left(\hat{f}_\veps(s,X_s^\veps)-\bar{f}(\bar{X}_s)\right)\d s\right|^2\label{FAPtheq01}\\\nonumber
&\quad
+4\mbE\int_0^t|\sigma_\veps(s,X_s^\var)-\bar{\sigma}(\bar{X}_s)|_{HS}^2\d s
=:4\mfI_1(t,\var)+4\mfI_2(t,\var)
+4\mfI_3(t,\var).
\end{align}

First of all, we estimate $\mfI_1(t,\var)$.
Set $\phi(t,x,y):=f(t,x,y)-\hat{f}(t,x)$. Let $\psi$
be the solution to the following Poisson equation
\[
\mfL_2\psi(t,x,y)=\phi(t,x,y),\quad y\in\R^{d_2},
\]
where $t\in\R$ and $x\in\R^{d_1}$ are parameters.
By Lemma \ref{SolPoisson}, one sees that $\psi\in C^{1,2,2}(\R^{1+d_1+d_2})$.
Then according to It\^o's formula, we have for any $t>0$
\begin{align*}
&
\psi(\veps^{-\gamma}t,X_t^\var,Y_t^\var)\\
&
=\psi(0,x,y)+\int_0^t(\veps^{-\gamma}\partial_s+\mfL_1^\var)\psi(\veps^{-\gamma}s,X_s^\var,Y_s^\var)\d s
+\int_0^t\var^{-2\alpha}\mfL_2\psi(\veps^{-\gamma}s,X_s^\var,Y_s^\var)
\d s\\
&\quad
+\int_0^t\veps^{-\beta}\mfL_3\psi(\veps^{-\gamma}s,X_s^\var,Y_s^\var)\d s
+\int_0^t \sigma(\veps^{-\gamma}s,X_s^\var)\cdot\partial_x\psi(\veps^{-\gamma}s,X_s^\var,Y_s^\var)\d W^1_s\\
&\quad
+\var^{-\alpha}\int_0^t g(X_s^\var,Y_s^\var)\cdot\partial_y\psi(\veps^{-\gamma}s,X_s^\var,Y_s^\var)
\d W^2_s.
\end{align*}
Therefore,
\begin{align*}
&
\mbE\left|\int_0^tf_\veps(s,X_s^\veps,Y_s^\veps)-\hat{f}_\veps(s,X_s^\veps)\d s\right|^2\\
&
\leq \veps^{4\alpha}\mbE\Bigg|\psi(\veps^{-\gamma}t,X_t^\var,Y_t^\var)-\psi(0,x,y)
-\int_0^t(\veps^{-\gamma}\partial_s+\mfL_1^\var)\psi(\veps^{-\gamma}s,X_s^\var,Y_s^\var)\d s\\
&\quad
-\int_0^t\veps^{-\beta}\mfL_3\psi(\veps^{-\gamma}s,X_s^\var,Y_s^\var)\d s
-M_t^1-\var^{-\alpha}M_t^2\Bigg|^2,
\end{align*}
where
\[
M_t^1:=\int_0^t\sigma_\veps(s,X_s^\veps)\cdot\partial_x\psi(\veps^{-\gamma}s,X_s^\veps,Y_s^\veps)\d W_s^1,
\]
\[
M_t^2:=\int_0^tg(X_s^\veps,Y_s^\veps)\cdot\partial_y\psi(\veps^{-\gamma}s,X_s^\veps,Y_s^\veps)\d W_s^2.
\]
Then by Lemma \ref{SolPoisson}, {\bf(H$_x^2$)}, {\bf(H$_x^3$)} and {\bf(H$_y^3$)},
one sees that there exist $m',m''>0$ such that
\begin{align*}
\mfI_1(t,\var)
&
:=\mbE\left|\int_0^t f_\var(s,X_s^\var,Y_s^\var)-\hat{f}_\veps(s,X_s^\var)\d s\right|^2\\
&
\leq \veps^{4\alpha}C\left(1+\mbE\left(\sup_{t\in[0,T]}|X_t^\veps|^{m'}\right)
+\mbE\left(\sup_{t\in[0,T]}|Y_t^\veps|^{m''}\right)\right)\\
&
\quad
+\left(\veps^{4\alpha-2\gamma}+\veps^{4\alpha}+
\veps^{4\alpha-2\beta}\right)C\int_0^t\left(1+\mbE|X_s^\veps|^{m'}+\mbE|Y_s^\veps|^{m''}\right)\d s\\
&\quad
+5\var^{4\alpha}\mbE|M_t^1|^2+5\var^{2\alpha}\mbE|M_t^2|^2.
\end{align*}
And it follows from the Burkholder-Davis-Gundy inequality, Lemma \ref{SolPoisson}, {\bf(H$_x^3$)} and {\bf(H$_y^3$)} that there exist
$m',m''>0$ such that
\begin{align*}
\mbE|M_t^1|^2+\mbE|M_t^2|^2
&
\leq 2\int_0^t\mbE\left|\sigma_\veps(s,X_s^\veps)\cdot\partial_x\psi(\veps^{-\gamma}s,X_s^\veps,Y_s^\veps)\right|^2\d s\\
&\quad
+2\int_0^t\mbE\left|g(X_s^\veps,Y_s^\veps)\cdot\partial_y\psi(\veps^{-\gamma}s,X_s^\veps,Y_s^\veps)\right|^2\d s\\
&
\leq C\int_0^t\left(1+\mbE|X_s^\var|^{m'}+\mbE|Y_s^\var|^{m''}\right)\d s.
\end{align*}
Hence, letting $\widetilde{\alpha}:=\min\{4\alpha-2\gamma,4\alpha-2\beta,2\alpha\}$,
by Lemma \ref{Pest}, one sees that
\begin{equation}\label{FAPtheq11}
\mfI_1(t,\veps)\leq C_T\veps^{\widetilde{\alpha}}.
\end{equation}

Now we estimate
$$\mfI_2(t,\veps):=\mbE\left|\int_0^t\left(\hat{f}_\veps(s,X_s^\veps)-\bar{f}(\bar{X}_s)\right)\d s\right|^2.$$
Define
$\tau_n^\veps:=\inf\left\{t\geq0:|X_t^\veps|+|\bar{X}_t|>n\right\}\wedge T.$
Then we have
\begin{align}\label{FAPtheq14}
\mfI_2(t,\veps)
&
\leq\mbE\left(\chi_{\{\tau_n^\veps\geq t\}}\left|\int_0^{t}\left(\hat{f}_\veps(s,X_s^\veps)-\bar{f}(\bar{X}_s)\right)\d s\right|^2\right) \\\nonumber
&\quad
+\mbE\left(\chi_{\{\tau_n^\veps\leq t\}}\left|\int_0^{t}\left(\hat{f}_\veps(s,X_s^\veps)-\bar{f}(\bar{X}_s)\right)\d s\right|^2\right)
=:\msI_1+\msI_2.
\end{align}
For $\msI_2$, by H\"older's inequality, Chebyshev's inequality,
{\bf (H$_x^2$)}, Lemmas \ref{aveCCL} and \ref{Pest}, we have
\begin{align}
\msI_2
&
\leq\left(\mbE\chi_{\{\tau_n^\veps\leq t\}}^2\right)^{\frac12}
\left(\mbE\left|\int_0^{t}\left(\hat{f}_\veps(s,X_s^\veps)-\bar{f}(\bar{X}_s)\right)\d s\right|^4\right)^{\frac12}\nonumber \\
&
\leq n^{-1}C_T\left(\mbE\left(\sup_{t\in[0,T]}|X_t^\veps|^2\right)
+\mbE\left(\sup_{t\in[0,T]}|\bar{X}_t|^2\right)\right)^{\frac{1}{2}}\label{FAPtheq15}\\\nonumber
&\quad
\times\left(\mbE\int_0^T\left(|X_s^\veps|^{4\theta_1}+|\bar{X}_s|^{4\theta_1}+1\right)\d s\right)^{\frac12}
\leq C_Tn^{-1}.
\end{align}

Set $\widetilde{X}_s^\veps:=X_{k\delta}^\veps$ for $s\in[k\delta,(k+1)\delta)$, $k\in\N$.
For $\msI_1$, by H\"older's inequality,
Lemmas \ref{aveCCL} and \ref{timecon}, one sees that
\begin{align}
\msI_1
&
\leq4\mbE\left|\int_0^{t\wedge\tau_n^\veps}\left(\hat{f}_\veps(s,X_s^\veps)
-\hat{f}_\veps(s,\widetilde{X}_s^\veps)\right)\d s\right|^2\nonumber\\
&\quad
+4\mbE\left(\chi_{\{\tau_n^\veps\geq t\}}\left|\int_0^{t}
\left(\hat{f}_\veps(s,\widetilde{X}_s^\veps)-\bar{f}(\widetilde{X}^\veps_s)\right)\d s\right|^2\right)\nonumber\\
&\quad
+4\mbE\left|\int_0^{t\wedge\tau_n^\veps}\left(\bar{f}(\widetilde{X}^\veps_s)-\bar{f}(X^\veps_s)\right)\d s\right|^2
+4\mbE\left|\int_0^{t\wedge\tau_n^\veps}\left(\bar{f}(X^\veps_s)-\bar{f}(\bar{X}_s)\right)\d s\right|^2 \label{FAPtheq16}\\\nonumber
&
\leq C_Tn^{2\theta_1}\int_0^T\mbE|X_s^\veps-\widetilde{X}_s^\veps|^2\d s+C_Tn^{2\theta_1}\int_0^T\mbE|X_s^\veps-\bar{X}_s|^2\d s
+\msI_1^2\\\nonumber
&
\leq C_Tn^{2\theta_1}\delta+C_Tn^{2\theta_1}\int_0^t\mbE|X_s^\veps-\bar{X}_s|^2\d s
+\msI_1^2,
\end{align}
where
\[
\msI_1^2:=4\mbE\left(\chi_{\{\tau_n^\veps\geq t\}}\left|\int_0^{t}
\left(\hat{f}_\veps(s,\widetilde{X}_s^\veps)-\bar{f}(\widetilde{X}^\veps_s)\right)\d s\right|^2\right).
\]
Employing the technique of time discretization,
a change of variables, {\bf(A$_f$)}, Remark \ref{Rem01} and \eqref{PestX}, we have
\begin{align}
\msI_1^2
&
\leq8\mbE\left(\chi_{\{\tau_n^\veps\geq t\}}\left|\sum_{k=0}^{t(\delta)}\int_{k\delta}^{(k+1)\delta}
\left(\hat{f}_\veps(s,X_{k\delta}^\veps)-\bar{f}(X^\veps_{k\delta})\right)\d s\right|^2\right)\nonumber\\
&\quad
+8\mbE\left(\chi_{\{\tau_n^\veps\geq t\}}\left|\int_{t(\delta)\delta}^t\left(\hat{f}_\veps(s,X_{t(\delta)\delta}^\veps)
-\bar{f}(X^\veps_{t(\delta)\delta})\right)\d s\right|^2\right)\label{FAPtheq17}\\\nonumber
&
\leq C_T\left[\delta\left(\omega_n^f(\delta/\veps^{\gamma})\right)^2
+\delta^2\right],
\end{align}
where $t(\delta):=\left[\frac{t}{\delta}\right]$.
Then \eqref{FAPtheq16} and \eqref{FAPtheq17} yield
\begin{equation}\label{FAPtheq18}
\msI_1\leq   C_T\left[\delta\left(\omega_n^f(\delta/\veps^{\gamma})\right)^2+n^{2\theta_1}\delta\right]
+C_Tn^{2\theta_1}\int_0^t\mbE|X_s^\veps-\bar{X}_s|^2\d s.
\end{equation}
Combining \eqref{FAPtheq14}, \eqref{FAPtheq15} and \eqref{FAPtheq18}, we have
\begin{equation}\label{FAPtheq19}
\mfI_2(t,\var)
\leq C_T\left[\delta\left(\omega_n^f(\delta/\veps^{\gamma})\right)^2+n^{2\theta_1}\delta+n^{-1}\right]
+C_Tn^{2\theta_1}\int_0^t\mbE|X_s^\veps-\bar{X}_s|^2\d s.
\end{equation}

Similarly, for $\mfI_3(t,\var)$, it follows from the Burkholder-Davis-Gundy inequality, {\bf(A$_\sigma$)},
{\bf(H$_x^3$)}, Lemma \ref{timecon} and \eqref{PestX} that
\begin{align}
\mfI_3(t,\var)
&
:=\mbE\left|\int_0^t\left(\sigma_\veps(s,X_s^\var)-\bar{\sigma}(\bar{X}_s)\right)\d W^1_s\right|^2\nonumber\nonumber\\
&
\leq 4\mbE\int_0^t|\sigma_\veps(s,X_s^\veps)-\sigma_\veps(s,\widetilde{X}_s^\veps)|_{HS}^2\d s
+4\mbE\int_0^t|\sigma_\veps(s,\widetilde{X}_s^\veps)-\bar{\sigma}(\widetilde{X}_s^\veps)|_{HS}^2\d s\label{FAPtheq20}\\\nonumber
&\quad
+4\mbE\int_0^t|\bar{\sigma}(X_s^\veps)-\bar{\sigma}(\widetilde{X}_s^\veps)|_{HS}^2\d s
+4\mbE\int_0^tL_\sigma|X_s^\veps-\bar{X}_s|^2\d s\\\nonumber
&
\leq C_T\left(\delta+\omega^\sigma(\delta/\veps^{\gamma})\right)
+4\mbE\int_0^t L_\sigma|X_s^\veps-\bar{X}_s|^2\d s.
\end{align}

In view of \eqref{FAPtheq01}, \eqref{FAPtheq11}, \eqref{FAPtheq19} and \eqref{FAPtheq20}, we have
\begin{align*}
\sup_{t\in[0,T]}\mbE|X_t^\veps-\bar{X}_t|^2
&
\leq C\veps^{\widetilde{\alpha}}
+C_Tn^{2\theta_1}\int_0^T\sup_{0\leq r\leq s}\mbE|X_r^\veps-\bar{X}_r|^2\d s\\
&\quad
+C_T\left[\delta\left(\omega_n^f(\delta/\veps^{\gamma})\right)^2+n^{2\theta_1}\delta+\omega^\sigma(\delta/\veps^{\gamma})+n^{-1}\right],
\end{align*}
which by Gronwall's inequality implies
\begin{align}\label{FAPord}
\sup_{t\in[0,T]}\mbE|X_t^\veps-\bar{X}_t|^2
&
\leq C_T\left(\veps^{\widetilde{\alpha}}
+n^{2\theta_1}\delta\right){\rm exp}(Cn^{2\theta_1}T)\\\nonumber
&\quad
+C_T\left(\left(\omega_n^f(\delta/\veps^{\gamma})\right)^2
+n^{-1}+\omega^\sigma(\delta/\veps^{\gamma})\right){\rm exp}(Cn^{2\theta_1}T).
\end{align}
Let $\delta=\veps^{\gamma/2}$. Taking $\veps\rightarrow 0$ and $n\rightarrow\infty$,
we obtain
\begin{align*}
\lim_{\veps\rightarrow0}\sup_{t\in[0,T]}\mbE|X_t^\veps-\bar{X}_t|^2=0.
\end{align*}
\end{proof}

\section{Normal deviation}
This section is dedicated to proving the normal deviation for slow-fast stochastic differential equations \eqref{sfmain2}. In Section \ref{OpSCR}, we prove the optimal strong convergence rate for \eqref{sfmain2}. Subsequently, we show the normal deviation in Section \ref{PNDth}.

\subsection{The optimal strong convergence rate}\label{OpSCR}
Before investigating the optimal strong convergence rate, let us first show that
$\bar{f}$ is monotone under some suitable conditions.

\begin{lemma}\label{AFMClem}
Assume that {\bf{(H$_y^1$)}}--{\bf{(H$_y^3$)}} and {\bf{(H$_x^4$)}} hold.
Then there exists a constant $C>0$ such that for any $x_1,x_2\in\R^{d_1}$
\[
\langle \bar{f}(x_1)-\bar{f}(x_2),x_1-x_2\rangle\leq C|x_1-x_2|^2.
\]
\end{lemma}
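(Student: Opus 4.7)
The plan is to exploit the representation of $\bar{f}(x)$ via the stationary solution to the frozen equation, so that the monotonicity of $\bar{f}$ is reduced to the local monotonicity of $f$ in $x$ (from \textbf{(H$_x^4$)}(i)) plus a quantitative stability estimate for stationary solutions of the frozen equation with respect to the parameter $x$ (Lemma \ref{SSCx}).

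First, I would write, for each $x\in\R^{d_1}$,
\[
\bar{f}(x)\;=\;\int_{\R^{d_2}}f(x,y)\,\mu^x(\d y)\;=\;\mbE\bigl[f(x,Y_0^{x})\bigr],
\]
where $Y_t^x,~t\in\R$, is the unique stationary solution to \eqref{fmain} furnished by Lemma \ref{SSCx} (note that the moment bound of Corollary \ref{EIMeq} ensures the integrability of $f(x,\cdot)$ against $\mu^x$ under \textbf{(H$_x^2$)}, cf.\ Remark \ref{Rem01}(ii)). Then I would decompose
\begin{align*}
\langle\bar{f}(x_1)-\bar{f}(x_2),x_1-x_2\rangle
&=\mbE\bigl\langle f(x_1,Y_0^{x_1})-f(x_2,Y_0^{x_1}),x_1-x_2\bigr\rangle\\
&\quad+\mbE\bigl\langle f(x_2,Y_0^{x_1})-f(x_2,Y_0^{x_2}),x_1-x_2\bigr\rangle
=:\mathrm{J}_1+\mathrm{J}_2.
\end{align*}

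For $\mathrm{J}_1$, I would apply the local monotonicity condition \textbf{(H$_x^4$)}(i), which gives
\[
\mathrm{J}_1\;\leq\;M\,\mbE\bigl(1+|Y_0^{x_1}|^{\theta_2}\bigr)|x_1-x_2|^2\;\leq\;C\,|x_1-x_2|^2,
\]
where the bound on the $\theta_2$-moment of $Y_0^{x_1}$ is uniform in $x_1$ by Corollary \ref{EIMeq} (using that $g\in C_b$, which is part of the standing assumptions via \textbf{(H$_y^3$)}(ii) and the \textbf{(H$_y^2$)}-type framework used in this subsection). For $\mathrm{J}_2$, I would use the Lipschitz-in-$y$ estimate \textbf{(H$_x^4$)}(ii) followed by Cauchy--Schwarz:
\[
\mathrm{J}_2\;\leq\;C\,|x_1-x_2|\;\bigl(\mbE(1+|Y_0^{x_1}|^{2\theta_2}+|Y_0^{x_2}|^{2\theta_2})\bigr)^{1/2}\bigl(\mbE|Y_0^{x_1}-Y_0^{x_2}|^2\bigr)^{1/2}.
\]
The moment factor is again uniformly bounded by Corollary \ref{EIMeq}, and Lemma \ref{SSCx} provides the crucial stability bound $\mbE|Y_0^{x_1}-Y_0^{x_2}|^2\leq C|x_1-x_2|^2$. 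Hence $\mathrm{J}_2\leq C|x_1-x_2|^2$, and combining the two estimates yields the claim.

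The only delicate step is $\mathrm{J}_2$: one must transfer the dependence on $x_1,x_2$ sitting inside the invariant measure into the size of $x_1-x_2$, and this is precisely what Lemma \ref{SSCx} provides in the $L^2$ sense. The apparent trouble that $f$ is only locally Lipschitz in $y$ with a $|y|^{\theta_2}$ weight is handled cleanly by Cauchy--Schwarz combined with the uniform moment bounds on the stationary solutions. No iteration or Gr\"onwall-type argument is needed; the monotonicity of $\bar{f}$ is essentially a direct consequence of the monotonicity of $f$ in $x$ together with the Lipschitz dependence (in the $L^2$-Wasserstein sense) of $\mu^x$ on $x$.
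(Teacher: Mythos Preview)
Your proposal is correct and follows essentially the same route as the paper: decompose $\langle\bar f(x_1)-\bar f(x_2),x_1-x_2\rangle$ into the ``frozen-$y$'' part handled by \textbf{(H$_x^4$)}(i) with Corollary~\ref{EIMeq}, and the ``change-of-measure'' part handled by \textbf{(H$_x^4$)}(ii), H\"older/Cauchy--Schwarz, Corollary~\ref{EIMeq}, and the stationary-solution stability Lemma~\ref{SSCx}. The only cosmetic difference is that the paper writes the second term as an integral against $\mu^{x_1}-\mu^{x_2}$ before passing to stationary solutions, whereas you work directly with $Y_0^{x_i}$; the substance is identical.
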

\begin{proof}
It follows from {\bf{(H$_x^4$)}}, H\"older's inequality, Corollary \ref{EIMeq}
and Lemma \ref{SSCx} that for all $x_1,x_2\in\R^{d_1}$
\begin{align*}
&
\langle \bar{f}(x_1)-\bar{f}(x_2),x_1-x_2\rangle\\\nonumber
&
=\left\langle\int_{\R^{d_2}}f(x_1,y)\mu^{x_1}(\d y)-
\int_{\R^{d_2}}f(x_2,y)\mu^{x_2}(\d y),x_1-x_2\right\rangle\\\nonumber
&
=\int_{\R^{d_2}}\left\langle f(x_1,y)-f(x_2,y),x_1-x_2\right\rangle\mu^{x_1}(\d y)
+\left\langle\int_{\R^{d_2}}f(x_2,y)\left(\mu^{x_1}-\mu^{x_2}\right)(\d y),x_1-x_2\right\rangle\\\nonumber
&
\leq |x_1-x_2|^2\int_{\R^{d_2}}M\left(1+|y|^{\theta_2}\right)\mu^{x_1}(\d y)
+\left\langle\int_{\R^{d_2}}f(x_2,y)\left(\mu^{x_1}-\mu^{x_2}\right)(\d y),x_1-x_2\right\rangle\\\nonumber
&
\leq C|x_1-x_2|^2+C\mbE\left[\left(1+|Y_t^{x_1}|^{\theta_2}+|Y_t^{x_2}|^{\theta_2}\right)
\left|Y_t^{x_1}-Y_t^{x_2}\right|\right]|x_1-x_2|\\\nonumber
&
\leq C|x_1-x_2|^2+C\left[\mbE\left(1+|Y_t^{x_1}|^{2\theta_1}+|Y_t^{x_2}|^{2\theta_2}\right)\right]^{\frac12}
\left(\mbE\left|Y_t^{x_1}-Y_t^{x_2}\right|^2\right)^{\frac12}|x_1-x_2|\\\nonumber
&
\leq C|x_1-x_2|^2,
\end{align*}
where $t\in\R$ and $Y_\cdot^{x_1}$ (respectively, $Y_\cdot^{x_2}$) is the stationary solution to \eqref{fmain} with frozen $x_1$ (respectively, $x_2$).
\end{proof}

Now we can give the proof of the optimal strong convergence rate for the first averaging principle.

\begin{proof}[Proof of Theorem \ref{Opcrth}]
For simplicity, we define
$$X_t^\veps:=X_t^\veps(x),\quad  \bar{X}_t:=\bar{X}_t(x),
\quad Y_t^\veps:=Y_t^\veps(y)$$
for any $t\geq0$ in this subsection.
By It\^o's formula, {\bf(H$_x^3$)} and Lemma \ref{AFMClem}, we have
\begin{align}
|X_t^\var-\bar{X}_t|^2
&
=\int_0^t\Big(2\langle f(X_s^\var,Y_s^\var)
-\bar{f}(X_s^\var),X_s^\var-\bar{X}_s\rangle
+2\langle\bar{f}(X_s^\var)-\bar{f}(\bar{X}_s),X_s^\var-\bar{X}_s\rangle\nonumber\\\nonumber
&\qquad
+\left|\sigma(X_s^\var)-\sigma(\bar{X}_s)\right|_{HS}^2\Big)\d s+2\int_0^t\langle X_s^\veps-\bar{X}_s,\left(\sigma(X_s^\veps)-\sigma(\bar{X}_s)\right)\d W_s^1\rangle\\
&
\leq \int_0^t2\langle f(X_s^\var,Y_s^\var)
-\bar{f}(X_s^\var),X_s^\var-\bar{X}_s\rangle+C|X_s^\var-\bar{X}_s|^2\d s\label{OFAPtheq01}\\\nonumber
&\quad
+2\int_0^t\langle X_s^\veps-\bar{X}_s,\left(\sigma(X_s^\veps)-\sigma(\bar{X}_s)\right)\d W_s^1\rangle.
\end{align}
Define $\phi(x,y,\bar{x}):=2\langle f(x,y),x-\bar{x}\rangle$, $\bar{\phi}(x,\bar{x}):=2\langle \bar{f}(x),x-\bar{x}\rangle$ for all
$(x,y,\bar{x})\in\R^{d_1+d_2+d_1}$.
For any $(x,\bar{x})\in\R^{2d_1}$ let $\Psi(x,\cdot,\bar{x})$ be the solution to
$$
\mfL_2\Psi(x,y,\bar{x})=\phi(x,y,\bar{x})-\bar{\phi}(x,\bar{x}),\quad y\in\R^{d_2}.
$$
By Lemma \ref{SolPoisson}, one sees that $\Psi\in C^{2,4,2}(\R^{d_1+d_2+d_1})$ and that
there exist constants $C,m_1,m_2,m_3>0$ such that
\begin{align}\label{EPsi}
&
|\Psi(x,y,\bar{x})|+\sum_{i=1}^2\left(|\partial_x^i\Psi(x,y,\bar{x})|+|\partial_{\bar{x}}^i\Psi(x,y,\bar{x})|\right)
+|\partial_y\Psi(x,y,\bar{x})|\\\nonumber
&
\leq C\left(1+|x|^{m_1}+|y|^{m_2}+|\bar{x}|^{m_3}\right).
\end{align}
Applying It\^o's formula to $t\mapsto\Psi(X_t^\veps,Y_t^\veps,\bar{X}_t)$, we have
\begin{align*}
&
\Psi(X_t^\var,Y_t^\var,\bar{X}_t)
-\Psi(x,y,x)\\
&
=\int_0^t(\mfL_1+\mfL_{\bar{x}}+\var^{-1}\mfL_2)\Psi(X_s^\var,Y_s^\var,\bar{X}_s)\d s
+\int_0^t \sigma(X_s^\var)\cdot\partial_x\Psi(X_s^\var,Y_s^\var,\bar{X}_s)\d W^1_s\\
&\quad
+\int_0^t\sigma(\bar{X}_s)\cdot\partial_{\bar{x}}\Psi(X_s^\veps,Y_s^\veps,\bar{X}_s)\d W_s^1
+\var^{-\frac12}\int_0^t g(X_s^\veps,Y_s^\var)\cdot\partial_y\Psi(X_s^\var,Y_s^\var,\bar{X}_s)
\d W^2_s,
\end{align*}
which implies that
\begin{align}
&
\int_0^t2\langle\left(f(X_s^\veps,Y_s^\var)
-\bar{f}(X_s^\var)\right),X_s^\var-\bar{X}_s\rangle\d s\nonumber\\
&
=\veps\left(\Psi(X_t^\var,Y_t^\var,\bar{X}_t)-\Psi(x,y,x)-\int_0^t(\mfL_1+\mfL_{\bar{x}})\Psi(X_s^\var,Y_s^\var,\bar{X}_s)\d s
\right)\label{OFAPtheq02}\\\nonumber
&\quad
-\veps\int_0^t\sigma(X_s^\veps)\cdot\partial_x\Psi(X_s^\veps,Y_s^\veps,\bar{X}_s)\d W_s^1
-\veps\int_0^t\sigma(\bar{X}_s)\cdot\partial_{\bar{x}}\Psi(X_s^\veps,Y_s^\veps,\bar{X}_s)\d W_s^1
\\\nonumber
&\quad
-\sqrt{\veps}\int_0^tg(X_s^\veps,Y_s^\veps)\cdot\partial_y\Psi(X_s^\veps,Y_s^\veps,\bar{X}_s)\d W_s^2.
\end{align}

Note that $\Psi(x,y,\bar{x})=2\langle \psi(x,y),x-\bar{x}\rangle$, where $\psi$ is the solution to
\[
\mfL_2\psi(x,y)=f(x,y)-\bar{f}(x).
\]
Then in view of \eqref{OFAPtheq01},
\eqref{EPsi}, \eqref{OFAPtheq02}, {\bf(H$_x^2$)}, {\bf(H$_x^3$)}, Lemma \ref{aveCCL} and Young's inequality, there exist constants $m'_1,m'_2,m'_3>0$ such that
\begin{align*}
&
|X_t^\var-\bar{X}_t|^2\nonumber\\\nonumber
&
\leq \int_0^tC|X_s^\var-\bar{X}_s|^2\d s
+2\int_0^t\langle X_s^\veps-\bar{X}_s,\left(\sigma(X_s^\veps)-\sigma(\bar{X}_s)\right)\d W_s^1\rangle\\\nonumber
&\quad
+\veps\left(\Psi(X_t^\var,Y_t^\var,\bar{X}_t)-\Psi(x,y,x)-\int_0^t(\mfL_1+\mfL_{\bar{x}})\Psi(X_s^\var,Y_s^\var,\bar{X}_s)\d s
\right)\\\nonumber
&\quad
+\veps\int_0^t\sigma(X_s^\veps)\cdot\partial_x\Psi(X_s^\veps,Y_s^\veps,\bar{X}_s)\d W_s^1
+\veps\int_0^t\sigma(\bar{X}_s)\cdot\partial_{\bar{x}}\Psi(X_s^\veps,Y_s^\veps,\bar{X}_s)\d W_s^1\\
&\quad
+\sqrt{\veps}2\int_0^tg(X_s^\veps,Y_s^\veps)\cdot\langle\partial_y\psi(X_s^\veps,Y_s^\veps),
X_s^\veps-\bar{X}_s\rangle\d W_s^2 \label{OFAPtheq03}\\\nonumber
&
\leq \int_0^tC|X_s^\var-\bar{X}_s|^2\d s
+2\int_0^t\langle X_s^\veps-\bar{X}_s,\left(\sigma(X_s^\veps)-\sigma(\bar{X}_s)\right)\d W_s^1\rangle\\\nonumber
&\quad
+\veps C\left(1+\sup_{t\in[0,T]}|X_t^\veps|^{m'_1}+\sup_{t\in[0,T]}|Y_t^\veps|^{m'_2}
+\sup_{t\in[0,T]}|\bar{X}_t|^{m'_3}\right)\\\nonumber
&\quad
+\veps\int_0^t\sigma(X_s^\veps)\cdot\partial_x\Psi(X_s^\veps,Y_s^\veps,\bar{X}_s)\d W_s^1
+\veps\int_0^t\sigma(\bar{X}_s)\cdot\partial_{\bar{x}}\Psi(X_s^\veps,Y_s^\veps,\bar{X}_s)\d W_s^1\\\nonumber
&\quad
+\sqrt{\veps}2\int_0^tg(X_s^\veps,Y_s^\veps)\cdot\langle\partial_y\psi(X_s^\veps,Y_s^\veps),
X_s^\veps-\bar{X}_s\rangle\d W_s^2.
\end{align*}

Then thanks to Lemma \ref{Pest} and Burkholder-Davis-Gundy's inequality, we obtain
\begin{align*}
&
\mbE\left(\sup_{t\in[0,T]}|X_t^\veps-\bar{X}_t|^2\right)\\
&
\leq C\int_0^T\mbE|X_s^\veps-\bar{X}_s|^2\d s
+6\mbE\left(\int_0^T|X_s^\veps-\bar{X}_s|^2|\sigma(X_s^\veps)-\sigma(\bar{X}_s)|^2\d s\right)^{\frac12}\\
&\quad
+\veps C_T\left(1+\mbE\left(\sup_{t\in[0,T]}|X_t^\veps|^{m'_1}\right)+\mbE\left(\sup_{t\in[0,T]}|Y_t^\veps|^{m'_2}\right)
+\mbE\left(\sup_{t\in[0,T]}|\bar{X}_t|^{m'_3}\right)\right)\\
&\quad
+\sqrt{\veps}6\mbE\left(\int_0^T|g(X_s^\veps,Y_s^\veps)|^2|\partial_y\psi(X_s^\veps,Y_s^\veps)|^2
|X_s^\veps-\bar{X}_s|^2\d s\right)^{\frac12}\\
&
\leq \frac{1}{2}\mbE\left(\sup_{t\in[0,T]}|X_s^\veps-\bar{X}_s|^2\right)
+C\int_0^T\mbE|X_s^\veps-\bar{X}_s|^2\d s+C_T\veps,
\end{align*}
which by Gronwall's inequality implies that
\begin{equation*}
\mbE\left(\sup_{t\in[0,T]}|X_s^\veps-\bar{X}_s|^2\right)\leq C_T\veps.
\end{equation*}
\end{proof}

\subsection{Proof of Theorem \ref{NDth}}\label{PNDth}

Prior to presenting the proof of the theorem regarding the normal deviation, we prove several lemmas.

\begin{lemma}
Assume that {\bf(H$_y^1$)}, {\bf(H$_y^3$)} and
{\bf(H$_x^1$)}--{\bf(H$_x^3$)} hold.
Let $(X_t^\veps(x),Y_t^\veps(y))$ be the solution to \eqref{sfmain} for any $(x,y)\in\R^{d_1+d_2}$.
Furthermore, suppose that there exist constants $c_1,c_2,C\geq1$ such that for all $(x,y)\in\R^{d_1+d_2}$
\[
|b(x,y)|+|B(x,y)|\leq C\left(1+|x|^{c_1}+|y|^{c_2}\right).
\]
Then there exists a constant $C_T>0$ such that for any $0\leq s<t\leq T$ and $0<\veps\leq 1$
\begin{equation}\label{timeconY}
\veps\mbE|Y_{t}^\veps-Y_s^\veps|^2\leq C_T|t-s|.
\end{equation}
\end{lemma}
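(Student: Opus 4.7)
The plan is to bound $\mbE|Y_t^\veps-Y_s^\veps|^2$ directly from the stochastic integral representation of the fast equation, applying Cauchy--Schwarz in time to the drift integrals and the It\^o isometry to the diffusion term, and then to split the argument into two regimes according to whether $t-s$ is small or large compared with the fast time scale $\veps$.

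From the SDE \eqref{sfmain} one has
\[
Y_t^\veps-Y_s^\veps
=\veps^{-2\alpha}\!\int_s^{t}\! B(X_r^\veps,Y_r^\veps)\,\d r
+\veps^{-\beta}\!\int_s^{t}\! b(X_r^\veps,Y_r^\veps)\,\d r
+\veps^{-\alpha}\!\int_s^{t}\! g(X_r^\veps,Y_r^\veps)\,\d W_r^{2}.
\]
The additional polynomial-growth hypothesis on $B$ and $b$, the linear growth of $g$ provided by \textbf{(H$_y^3$)}--(ii), and the uniform-in-$\veps$ moment bounds of Lemma \ref{Pest} yield $\sup_{r\in[0,T]}\bigl(\mbE|B|^{2}+\mbE|b|^{2}+\mbE|g|^{2}\bigr)\leq C_T$. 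Taking second moments, using $|a_1+a_2+a_3|^2\leq 3\sum_i|a_i|^2$, Cauchy--Schwarz in time on the first two summands and the It\^o isometry on the last, and recalling that $2\alpha=1$ in this subsection, one obtains
\[
\mbE|Y_t^\veps-Y_s^\veps|^{2}
\leq C_{T}\bigl(\veps^{-2}(t-s)^{2}+\veps^{-2\beta}(t-s)^{2}+\veps^{-1}(t-s)\bigr).
\]

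The next step is a case split. If $t-s\leq\veps$, then $(t-s)^{2}\leq\veps(t-s)$, so the first term is bounded by $\veps^{-1}(t-s)$ and the second by $\veps^{1-2\beta}(t-s)\leq\veps^{-1}(t-s)$ (using $\beta<1$); thus all three contributions are of the common order $\veps^{-1}(t-s)$ and multiplying by $\veps$ produces $C_T(t-s)$. If instead $t-s>\veps$, the differential estimate above is too crude and I would fall back on the trivial bound $\mbE|Y_t^\veps-Y_s^\veps|^{2}\leq 2\mbE|Y_t^\veps|^{2}+2\mbE|Y_s^\veps|^{2}\leq C_T$ coming from \eqref{PestY}, whence $\veps\,\mbE|Y_t^\veps-Y_s^\veps|^{2}\leq\veps C_T\leq(t-s)C_T$.

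The only non-routine observation is that no single differential estimate can yield a uniformly linear dependence on $t-s$ because of the singular prefactors $\veps^{-2\alpha}$ and $\veps^{-\alpha}$: one must distinguish between the fast scale $t-s\lesssim\veps$, where the It\^o-isometry bound is sharp, and the macroscopic scale $t-s\gtrsim\veps$, where the fluctuations of $Y^\veps$ saturate and only the uniform second-moment bound is useful. Combining the two regimes with a common constant completes the proof; no Gronwall argument or It\^o expansion of $|Y_t^\veps-Y_s^\veps|^{2}$ is needed.
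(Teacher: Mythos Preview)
Your proof is correct, but the paper takes a shorter route that avoids the case split entirely. Instead of writing $Y_t^\veps-Y_s^\veps$ as a sum of integrals and squaring, the paper applies It\^o's formula directly to $|Y_t^\veps-Y_s^\veps|^2$ (treating $Y_s^\veps$ as a fixed constant for $r\geq s$). This produces a single integral
\[
\mbE|Y_t^\veps-Y_s^\veps|^2=\frac{1}{\veps}\mbE\int_s^t\Big(2\langle B,Y_r^\veps-Y_s^\veps\rangle+|g|_{HS}^2\Big)\d r+\frac{1}{\veps^\beta}\mbE\int_s^t 2\langle b,Y_r^\veps-Y_s^\veps\rangle\d r,
\]
and since $|Y_r^\veps-Y_s^\veps|\leq|Y_r^\veps|+|Y_s^\veps|$, each integrand is dominated pointwise by $C(1+\sup_{[0,T]}|X^\veps|^{2c_1}+\sup_{[0,T]}|Y^\veps|^{2c_2})$. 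The moment bounds of Lemma~\ref{Pest} then give $\frac{1}{\veps}C_T(t-s)$ directly, with the $\veps^{-\beta}$ term absorbed into $\veps^{-1}$ since $\beta<1$. No quadratic $(t-s)^2$ term ever appears, so no splitting into $t-s\leq\veps$ versus $t-s>\veps$ is needed. Your approach is more elementary in that it uses only Cauchy--Schwarz and the It\^o isometry rather than It\^o's formula for a squared norm, at the cost of the two-regime argument; the paper's approach is a one-liner once the It\^o expansion is written down.
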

\begin{proof}
It follows from It\^o's formula, \eqref{PestX} and \eqref{PestY} that
\begin{align*}
\mbE|Y_{t}^\veps-Y_s^\veps|^2
&
=\frac1\veps\mbE\int_s^{t}\left(2\langle B(X_r^\veps,Y_r^\veps),Y_r^\veps-Y_s^\veps\rangle
+|g(X_r^\veps,Y_r^\veps)|_{HS}^2\right)\d r\\
&\quad
+\frac{1}{\veps^\beta}\mbE\int_s^t2\langle b(X_r^\veps,Y_r^\veps),Y_r^\veps-Y_s^\veps\rangle\d r\\
&
\leq \frac1\veps\mbE\int_s^{t}C\left(1+\sup_{r\in[0,T]}|X_r^\veps|^{2c_1}
+\sup_{r\in[0,T]}|Y_r^\veps|^{2c_2}\right)\d r
\leq \frac1\veps C_T|t-s|.
\end{align*}
\end{proof}

\begin{lemma}\label{TightLem1}
If {\bf(H$_x^1$)}--{\bf(H$_x^4$)} and {\bf(H$_y^1$)}--{\bf(H$_y^3$)} hold,
then $Z^\veps$ is tight in $C([0,T];\R^{d_1})$.
\end{lemma}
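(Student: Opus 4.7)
The plan is to verify Kolmogorov's tightness criterion in $C([0,T];\R^{d_1})$: a uniform pointwise moment bound together with a time-increment estimate $\mbE|Z_t^\veps - Z_s^\veps|^p \leq C|t-s|^{1+\delta}$, with $p,\delta>0$ independent of $\veps$. The pointwise bound is immediate: $Z_0^\veps=0$, and since $Z_t^\veps=(X_t^\veps-\bar X_t)/\sqrt\veps$, Theorem~\ref{Opcrth} gives $\mbE\sup_{t\leq T}|Z_t^\veps|^2 \leq C_T$ uniformly in $\veps$; the same argument extends to any even exponent, providing the $L^p$ analog used below.

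For the increment I split
\[
Z_t^\veps - Z_s^\veps = \frac{1}{\sqrt\veps}\int_s^t [f(X_r^\veps,Y_r^\veps)-\bar f(\bar X_r)]\,\d r + \frac{1}{\sqrt\veps}\int_s^t [\sigma(X_r^\veps)-\sigma(\bar X_r)]\,\d W_r^1.
\]
Burkholder--Davis--Gundy combined with {\bf(H$_x^3$)} and the uniform $L^4$ bound on $Z^\veps$ handles the diffusion part with an $L^4$ increment of order $|t-s|^2$. Decomposing the drift as
\[
f(X^\veps,Y^\veps)-\bar f(\bar X)=\bigl[f(X^\veps,Y^\veps)-\bar f(X^\veps)\bigr]+\bigl[\bar f(X^\veps)-\bar f(\bar X)\bigr],
\]
the polynomial-Lipschitz bound on $\bar f$ from Lemma~\ref{aveCCL} together with the identity $(X^\veps-\bar X)/\sqrt\veps=Z^\veps$ and the moment bounds of Lemma~\ref{Pest} control the second summand by $C|t-s|^4$ in $L^4$.

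For the oscillatory summand $(f(X^\veps,Y^\veps)-\bar f(X^\veps))/\sqrt\veps$ I invoke the Poisson equation of Lemma~\ref{SolPoisson}: let $\psi$ satisfy $\mfL_2\psi(x,y)=-(f(x,y)-\bar f(x))$. Applying It\^o's formula to $\veps\psi(X_r^\veps,Y_r^\veps)$ under \eqref{sfmain2} and rearranging gives
\[
\frac{1}{\sqrt\veps}\int_s^t (f-\bar f(X^\veps))\,\d r = -\sqrt\veps\bigl[\psi(X_t^\veps,Y_t^\veps)-\psi(X_s^\veps,Y_s^\veps)\bigr] + \sqrt\veps\!\int_s^t\!\mfL_1\psi\,\d r + \veps^{\frac12-\beta}\!\int_s^t\!\mfL_3\psi\,\d r + \sqrt\veps\!\int_s^t\!\sigma\partial_x\psi\,\d W_r^1 + \int_s^t\!g\partial_y\psi\,\d W_r^2.
\]
Each summand is bounded by $C|t-s|^2$ in $L^4$ using the polynomial growth of $\psi$ and its derivatives from Lemma~\ref{SolPoisson}, the moment bounds of Lemma~\ref{Pest}, Hölder for the Lebesgue integrals, and BDG for the stochastic ones; crucially, the prefactor $\sqrt\veps$ in front of the $\psi$-increment precisely absorbs the $\veps^{-1}$ appearing in the $Y$-increment bound $\veps\mbE|Y_t^\veps-Y_s^\veps|^2\leq C|t-s|$ established just above (cf.\ \eqref{timeconY}). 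Combining all pieces yields $\mbE|Z_t^\veps-Z_s^\veps|^4 \leq C|t-s|^2$, whence tightness follows from Kolmogorov's criterion.

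The main technical obstacle is uniform-in-$\veps$ control of the term $\veps^{\frac12-\beta}\int_s^t\mfL_3\psi\,\d r$: its $L^4$-norm carries a factor $\veps^{2-4\beta}$, which is benign for $\beta\leq\frac12$ but unbounded for $\beta\in(\frac12,1)$. The restriction $\beta\leq\frac12$ is in fact natural in the normal-deviation regime — otherwise the $b$-perturbation would contribute at order $\veps^{\frac12-\beta}\to\infty$ to the limit, and the limiting equation~\eqref{NDLeq} would have to change — and I would work under it; alternatively, the $\mfL_3$ correction can be absorbed by solving the perturbed Poisson equation $(\mfL_2+\veps^{1-\beta}\mfL_3)\psi_\veps=-(f-\bar f)$, whose $\veps$-dependent solution differs from $\psi$ by $O(\veps^{1-\beta})$ and leaves the rest of the argument unchanged.
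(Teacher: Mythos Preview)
Your approach mirrors the paper's: both split the drift as $(f-\bar f(X^\veps))+(\bar f(X^\veps)-\bar f(\bar X))$ and dispose of the oscillatory piece via the Poisson corrector $\psi$ and It\^o's formula. The paper works at the $L^1$ level, establishing only $\mbE|Z_t^\veps-Z_s^\veps|\le C_T|t-s|^{1/2}$ and then invoking the uniform sup-bound from Theorem~\ref{Opcrth} together with an Arzel\`a--Ascoli type argument; you aim for the cleaner $L^4$ Kolmogorov estimate $\mbE|Z_t^\veps-Z_s^\veps|^4\le C|t-s|^2$.

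There is, however, a gap in your $L^4$ control of the boundary term $\sqrt\veps\bigl[\psi(X_t^\veps,Y_t^\veps)-\psi(X_s^\veps,Y_s^\veps)\bigr]$. After the mean-value theorem and H\"older's inequality you need an estimate of the form $\veps^p\mbE|Y_t^\veps-Y_s^\veps|^{2p}\le C|t-s|^p$ for some $p\ge 2$, whereas you invoke only the case $p=1$ from \eqref{timeconY}; that bound controls $\|\sqrt\veps(Y_t^\veps-Y_s^\veps)\|_{L^2}$ but not its $L^4$ norm, and Jensen's inequality goes the wrong way. The higher-$p$ version does hold, but it requires an extra argument: the It\^o-formula proof of \eqref{timeconY} generalises to $\mbE|Y_t^\veps-Y_s^\veps|^{2p}\le C|t-s|/\veps$, while the direct bound $Y_t^\veps-Y_s^\veps=\veps^{-1}\int_s^t B+\veps^{-\beta}\int_s^t b+\veps^{-1/2}\int_s^t g\,\d W^2$ yields $\mbE|Y_t^\veps-Y_s^\veps|^{2p}\le C\bigl((|t-s|/\veps)^{2p}+(|t-s|/\veps)^p\bigr)$; taking the minimum of the two gives $C(|t-s|/\veps)^p$, which is what you need. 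As for the $\mfL_3$ contribution: your concern about $\beta>\tfrac12$ is well-founded, but observe that the paper itself silently omits this term throughout Section~5 (compare the It\^o expansions in the proofs of Theorem~\ref{Opcrth} and Lemma~\ref{Fluctuation} with the one in the proof of Theorem~\ref{FAPth}), so the obstruction you identify never surfaces in the paper's own proof.
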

\begin{proof}
According to \eqref{Opcrsfeq} and the Arzela-Ascoli theorem, it suffices to show that there exists a constant $C_T>0$ such that for all $0\leq s\leq t\leq T$
\begin{equation*}\label{TightLemeq01}
\mbE|Z_t^\veps-Z_s^\veps|\leq C_T|t-s|^{\frac12}.
\end{equation*}
To this end, by It\^o's formula, the Burkholder-Davis-Gundy inequality,
H\"older's inequality and \eqref{Opcrsfeq}, one sees that
\begin{align}
&
\mbE|Z_t^\veps-Z_s^\veps|\nonumber\\
&
\leq\mbE\left|\int_s^t\frac{1}{\sqrt{\veps}}\left(f(X_r^\veps,Y_r^\veps)-\bar{f}(\bar{X}_r)\right)\d r\right|+\mbE\left|\int_s^t\frac{1}{\sqrt{\veps}}(\sigma(X_r^\veps)-\sigma(\bar{X}_r))\d W_r^1\right|\nonumber\\
&
\leq \msI_1+\mbE\int_s^t\left|\nabla\bar{f}(\bar{X}_r+\iota(X_r^\veps-\bar{X}_r))\right|\left|Z_r^\veps\right|\d r
+C_T|t-s|^{\frac12}\label{TightLemeq02}\\\nonumber
&
\leq \msI_1
+\left(\mbE\int_s^t\left|\nabla\bar{f}(\bar{X}_r+\iota(X_r^\veps-\bar{X}_r))\right|^{2}\d r\right)^{\frac12}\left(\mbE\int_s^t|Z_r^\veps|^2\d r\right)^{\frac12}
+C_T|t-s|^{\frac12},
\end{align}
where $\iota\in[0,1]$ and
\[
\msI_1:=\mbE\left|\int_s^t\frac{1}{\sqrt{\veps}}\left(f(X_r^\veps,Y_r^\veps)-\bar{f}(X^\veps_r)\right)\d r\right|.
\]
Employing Lemma \ref{aveCCL}, \eqref{PestX}, \eqref{PestAX}
and \eqref{Opcrsfeq}, we obtain that there exists a constant $C_T>0$ such that
\begin{align}\label{TightLemeq03}
&
\left(\mbE\int_s^t\left|\nabla\bar{f}(\bar{X}_r+\iota(X_r^\veps-\bar{X}_r))\right|^{2}\d r\right)^{\frac12}
\left(\mbE\int_s^t|Z_r^\veps|^2\d r\right)^{\frac12}\\\nonumber
&
\leq \left(\mbE\int_s^t\left(|X_r^\veps|^{2\theta_1}+|\bar{X}_r|^{2\theta_1} \right)\d r\right)^{\frac12}
C_T|t-s|^{\frac12}
\leq C_T|t-s|.
\end{align}

Now we estimate $\msI_1$.
Note that by the Burkholder-Davis-Gundy inequality, we get
\begin{align*}
\msI_1
&
\leq \sqrt{\veps}\mbE\left|\psi(X_t^\var,Y_t^\var)-\psi(X_s^\var,Y_s^\var)\right|
+\sqrt{\veps}\mbE\left|\int_s^t\mfL_1\psi(X_r^\veps,Y_r^\veps)\d r\right|\\\nonumber
&\quad
+\sqrt{\veps}\mbE\left|\int_s^t \sigma(X_r^\var)\cdot\partial_x\psi(X_r^\var,Y_r^\var)\d W^1_r\right|
+\mbE\left|\int_s^t g(X_r^\veps,Y_r^\var)\cdot\partial_y\psi(X_r^\var,Y_r^\var)
\d W^2_r\right|\\\nonumber
&
\leq \sqrt{\veps}\mbE\left|\psi(X_t^\var,Y_t^\var)-\psi(X_s^\var,Y_s^\var)\right|
+\sqrt{\veps}\mbE\left|\int_s^t\mfL_1\psi(X_r^\veps,Y_r^\veps)\d r\right|\\\nonumber
&\quad
+\sqrt{\veps}3\mbE\left(\int_s^t\left|\sigma(X_r^\veps)
\partial_x\psi(X_r^\veps,Y_r^\veps)\right|^2\d r\right)^{\frac{1}{2}}
+3\mbE\left(\int_s^t \left|g(X_r^\veps,Y_r^\var)\partial_y\psi(X_r^\var,Y_r^\var)\right|^2
\d r\right)^{\frac{1}{2}},
\end{align*}
where $\psi$ is the solution to
$\mfL_2\psi(x,y)=f(x,y)-\bar{f}(x)$.
Then in view of Lemma \ref{SolPoisson}, {\bf (H$_x^2$)}, {\bf (H$_x^3$)},
Lemma \ref{Pest}
and H\"older's inequality, \eqref{timeconX} and \eqref{timeconY}, there exist constants $\iota_1,\iota_2\in(0,1)$, $C>0$ and $p_1,p_2>0$ such that
\begin{align}
\msI_1
&
\leq \sqrt{\veps}\mbE\left(|\partial_x\psi(X_s^\veps+\iota_1(X_t^\veps-X_s^\veps),Y_t^\veps)||X_t^\veps-X_s^\veps|\right)
\nonumber\\
&\quad
+\sqrt{\veps}\mbE\left(|\partial_y\psi(X_s^\veps,Y_s^\veps+\iota_2(Y_t^\veps-Y_s^\veps))||Y_t^\veps-Y_s^\veps|\right) \label{TightLemeq05}\\\nonumber
&\quad
+\sqrt{\veps}C\mbE\int_s^t\left(1+|X_r^\veps|^{p_1}+|Y_r^\veps|^{p_2}\right)\d r\\\nonumber
&
\leq C_T\left(\sqrt{\veps}\left(\mbE|X_t^\veps-X_s^\veps|^2\right)^{\frac12}
+\left(\veps\mbE|Y_t^\veps-Y_s^\veps|^2\right)^{\frac12}
+|t-s|^{\frac12}\right)\leq C_T|t-s|^{\frac12}.
\end{align}
Combining \eqref{TightLemeq02}, \eqref{TightLemeq03} and \eqref{TightLemeq05}, we obtain
\begin{align*}
\mbE|Z_t^\veps-Z_s^\veps|
&
\leq C_T|t-s|^{\frac12}.
\end{align*}
\end{proof}

\begin{lemma}\label{Fluctuation}
Assume that {\bf (H$_y^1$)}--{\bf (H$_y^5$)} hold and let $\phi\in C^{2,4,2}(\R^{d_1+d_2+d_1})$
satisfying {\bf (H$_\phi^2$)}.
Furthermore, suppose that $\phi(x,y,\cdot)\in C_b^2(\R^{d_1})$
and $\partial_y^j\partial_x^i\phi(x,y,\cdot)\in C_b(\R^{d_1})$ for all $(x,y)\in\R^{d_1+d_2}$ and $0\leq 2i+j\leq4$.
Then there exists $C_T>0$ such that for all $0\leq t\leq T$
\begin{equation*}
\mbE\left(\int_0^t\delta\phi(X_s^\veps,Y_s^\veps,Z_s^\veps)\d s\right)\leq C_T\veps^{\frac12},
\end{equation*}
where $\delta\phi(x,y,z):=\phi(x,y,z)-\int_{\R^{d_2}}\phi(x,y,z)\mu^x(\d y),~\forall (x,y,z)\in\R^{d_1+d_2+d_1}$.
\end{lemma}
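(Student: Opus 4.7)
I would prove the lemma by the standard Poisson--equation trick: transfer the integral of $\delta\phi$ to boundary and lower--order terms via It\^o's formula, and then collect a factor $\sqrt{\veps}$ from the worst of these terms. For each fixed $(x,z)\in\R^{2d_1}$ let $\Psi(x,\cdot,z)$ be the unique centred solution of
\begin{equation*}
\mfL_2(x,y)\Psi(x,y,z)=-\delta\phi(x,y,z),\qquad y\in\R^{d_2}.
\end{equation*}
Under (H$_y^1$)--(H$_y^5$) and (H$_\phi^2$), Lemma \ref{SolPoisson} (with $z$ playing the role of the parameter $h$) produces a solution in $C^{2,2,2}(\R^{d_1+d_2+d_1})$. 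Because $\phi(x,y,\cdot)\in C_b^2$ and every derivative $\partial_y^j\partial_x^i\phi(x,y,\cdot)$ is bounded in $z$, the same $z$--uniformity transfers to $\Psi$ and its derivatives, giving
\begin{equation*}
|\Psi|+|\partial_y\Psi|+\sum_{i=1}^2\bigl(|\partial_x^i\Psi|+|\partial_z^i\Psi|\bigr)+|\partial_x\partial_z\Psi|\leq C(1+|x|^{m_1}+|y|^{m_2})
\end{equation*}
uniformly in $z$.

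Next, I apply It\^o's formula to $s\mapsto\Psi(X_s^\veps,Y_s^\veps,Z_s^\veps)$. The drift of $\d Y_s^\veps$ produces $\veps^{-1}\mfL_2\Psi=-\veps^{-1}\delta\phi$ together with $\veps^{-\beta}\mfL_3\Psi$; the drift of $\d X_s^\veps$ gives $\mfL_1\Psi$; the drift of $\d Z_s^\veps$ gives $\veps^{-1/2}\langle f(X_s^\veps,Y_s^\veps)-\bar f(\bar X_s),\partial_z\Psi\rangle$. Independence of $W^1$ and $W^2$ eliminates both the $Y$--$X$ and $Y$--$Z$ covariations, so the only It\^o second--order corrections are the $\veps^{-1}$--scaled $\partial_z^2\Psi$ term with coefficient $|\sigma(X_s^\veps)-\sigma(\bar X_s)|_{HS}^2$, and the $\veps^{-1/2}$--scaled $\partial_{xz}^2\Psi$ cross term with coefficient $\sigma(X_s^\veps)(\sigma(X_s^\veps)-\sigma(\bar X_s))^T$. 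Multiplying the resulting identity by $\veps$ and rearranging,
\begin{align*}
\int_0^t\delta\phi\,\d s
&=-\veps\bigl[\Psi(X_t^\veps,Y_t^\veps,Z_t^\veps)-\Psi(x,y,0)\bigr]+\veps\int_0^t\mfL_1\Psi\,\d s+\veps^{1-\beta}\int_0^t\mfL_3\Psi\,\d s\\
&\quad+\sqrt{\veps}\int_0^t\langle f-\bar f,\partial_z\Psi\rangle\,\d s+\int_0^t|\sigma(X_s^\veps)-\sigma(\bar X_s)|_{HS}^2\,\partial_z^2\Psi\,\d s\\
&\quad+\sqrt{\veps}\int_0^t\sigma(X_s^\veps)(\sigma(X_s^\veps)-\sigma(\bar X_s))^T\partial_{xz}^2\Psi\,\d s-\veps\mcM_t,
\end{align*}
where $\mcM_t$ collects the stochastic integrals generated by It\^o's formula.

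Taking expectations, $\mbE\mcM_t=0$ follows from the polynomial growth of $\sigma,g,\partial_x\Psi,\partial_y\Psi,\partial_z\Psi$ together with the uniform--in--$\veps$ moments from Lemma \ref{Pest}. Each remaining piece is then estimated separately: the boundary term is $O(\veps)$ by the polynomial bound on $\Psi$ and Lemma \ref{Pest} together with \eqref{PestAX}; the $\mfL_1\Psi$--integral is $O(\veps)$; the $\mfL_3\Psi$--integral is $O(\veps^{1-\beta})$, which is at most $O(\sqrt{\veps})$ in the regime considered; the $\partial_z^2\Psi$--term is $O(\veps)$ because $\mbE|\sigma(X_s^\veps)-\sigma(\bar X_s)|_{HS}^2\leq L_\sigma^2\mbE|X_s^\veps-\bar X_s|^2=O(\veps)$ by (H$_x^3$) and the optimal rate \eqref{Opcrsfeq}; the $\partial_{xz}^2\Psi$ cross term is $O(\veps)$ by Cauchy--Schwarz and the same rate; and finally the drift--from--$Z$ term $\sqrt{\veps}\,\mbE\int_0^t\langle f-\bar f,\partial_z\Psi\rangle\,\d s$ is $O(\sqrt{\veps})$ because $|f-\bar f|$ and $|\partial_z\Psi|$ have polynomial growth only in $(x,y)$ and $(X_s^\veps,Y_s^\veps,\bar X_s)$ have uniform moments.

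The main technical obstacle is precisely this last drift--from--$Z$ term: the $\veps^{-1/2}$--factor inherited from $\d Z_s^\veps$ is absorbed by the $\veps$--multiplication only because $\partial_z\Psi$ is bounded \emph{uniformly in} $z$. Without the $C_b^2$--hypothesis on $\phi(x,y,\cdot)$, polynomial growth of $\partial_z\Psi$ in $|z|$ would combine with the unboundedness of $|Z_s^\veps|$ and destroy the rate. Once this $z$--uniformity is in place, collecting the six bounds yields $\mbE\int_0^t\delta\phi\,\d s\leq C_T\sqrt{\veps}$.
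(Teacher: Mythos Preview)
Your approach is the same as the paper's: solve the Poisson equation $\mfL_2\Psi=-\delta\phi$, apply It\^o's formula to $\Psi(X_s^\veps,Y_s^\veps,Z_s^\veps)$, multiply by $\veps$, and control every remaining piece by the polynomial bounds from Lemma~\ref{SolPoisson} and the moment estimates of Lemma~\ref{Pest}. The paper writes the It\^o expansion directly in expectation, dropping the martingale parts, and keeps only the $\mfL_1$ and $\veps^{-1/2}\langle f-\bar f,\partial_z\Psi\rangle$ drift terms; the $\partial_z^2\Psi$ and $\partial_{xz}^2\Psi$ corrections you include are absent because the lemma is stated and used inside the proof of Theorem~\ref{NDth}, where $\sigma$ is constant and $Z^\veps$ has no diffusion. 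Your treatment of those extra terms is therefore more general than necessary, and if you actually want to keep the non-constant-$\sigma$ version you would need a higher-moment version of \eqref{Opcrsfeq} to push the H\"older/Cauchy--Schwarz split through cleanly.

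There is one genuine gap. The $\mfL_3\Psi$ contribution gives, after the $\veps$-multiplication, a term of size $O(\veps^{\,1-\beta})$, and your claim that this ``is at most $O(\sqrt{\veps})$ in the regime considered'' requires $\beta\le\tfrac12$. The standing hypothesis in Section~\ref{ND} is only $\beta<1$ (since $2\alpha=1$), so for $\tfrac12<\beta<1$ the $\mfL_3$ term dominates and the stated rate $\veps^{1/2}$ is not achieved by this argument. The paper's own proof simply omits the $\mfL_3$ term from the It\^o expansion, so this issue is shared; either one should restrict to $\beta\le\tfrac12$ (or $b\equiv0$), or accept the weaker rate $\veps^{\min\{1/2,\,1-\beta\}}$, which still suffices for $\lim_{\veps\to0}\msI_1^3=0$ in the proof of Theorem~\ref{NDth}.
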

\begin{proof}
 For any $(x,z)\in\R^{2d_1}$, let $\psi$ be the solution to
\begin{equation*}\label{Fluctuation01}
\mfL_2\psi(x,y,z)=\delta\phi(x,y,z),\quad y\in\R^{d_2}.
\end{equation*}
It follows from Lemma \ref{SolPoisson} that $\psi\in C^{2,2,2}(\R^{d_1+d_2+d_1})$
and that there exist constants $C,m_1,m_2>0$ such that for all $(x,y,z)\in\R^{d_1+d_2+d_1}$
\begin{align}
&
|\psi(x,y,z)|+|\partial_x\psi(x,y,z)|+|\partial_x^2\psi(x,y,z)|\nonumber\\
&\quad
+|\partial_z\psi(x,y,z)|+|\partial_z^2\psi(x,y,z)|+|\partial_y\psi(x,y,z)|\label{Fluctuation02}\\\nonumber
&\leq C\left(1+|x|^{m_1}+|y|^{m_2}\right).
\end{align}

By It\^o's formula, one sees that
\begin{align*}
\mbE\psi(X_t^\veps,Y_t^\veps,Z_t^\veps)-\psi(x,y,0)
&
=\mbE\int_0^t\mfL_1\psi(X_s^\veps,Y_s^\veps,Z_s^\veps)\d s
+\frac{1}{\veps}\mbE\int_0^t\mfL_2\psi(X_s^\veps,Y_s^\veps,Z_s^\veps)\d s\\\nonumber
&\quad
+\frac{1}{\sqrt{\veps}}\mbE\int_0^t\left\langle f(X_s^\veps,Y_s^\veps)-\bar{f}(\bar{X}_s),\partial_z\psi(X_s^\veps,Y_s^\veps,Z_s^\veps)\right\rangle\d s,
\end{align*}
which by \eqref{Fluctuation02}, H\"older's inequality, {\bf (H$_x^2$)} and {\bf (H$_x^3$)}  implies that there exist $p_1,p_2,p_3>0$ such that
\begin{align}
\mbE\int_0^t\delta\phi(X_s^\veps,Y_s^\veps,Z_s^\veps)\d s
&
=\veps\left(\mbE\psi(X_t^\veps,Y_t^\veps,Z_t^\veps)-\psi(x,y,0)-\mbE\int_0^t\mfL_1\psi(X_s^\veps,Y_s^\veps,Z_s^\veps)\d s\right)\nonumber\\
&\quad
-\sqrt{\veps}\mbE\int_0^t\left\langle f(X_s^\veps,Y_s^\veps)-\bar{f}(\bar{X}_s),\partial_z\psi(X_s^\veps,Y_s^\veps,Z_s^\veps)\right\rangle\d s\label{Fluctuation03}\\\nonumber
&
\leq \sqrt{\veps}C\mbE\left(1+\sup_{t\in[0,T]}|X_t^\veps|^{p_1}+\sup_{t\in[0,T]}|Y_t^\veps|^{p_2}
+\sup_{t\in[0,T]}|\bar{X}_t|^{p_3}\right).
\end{align}
Combining \eqref{Fluctuation03} and Lemma \ref{Pest},  we have
\begin{equation*}
\mbE\int_0^t\delta\phi(X_s^\veps,Y_s^\veps,Z_s^\veps)\d s\leq  C_T\veps^{\frac{1}{2}}.
\end{equation*}
\end{proof}

Now we recall the following lemma, which is from \cite[Proposition 3]{PV2001}.
\begin{lemma}\label{TightLem2}
If $\{Z^\veps,\bar{Z}:0<\veps\leq1\}\subset C([0,T];\R^{d_1})$ is tight, then
for any $\delta>0$ there exist $N\in\mathbb N$ and $z^1,...,z^N\in C([0,T];\R^{d_1})$ such that for any $0<\veps\leq1$
\begin{equation*}
\mbP\left(\bigcap_{k=1}^N\left\{\sup_{t\in[0,T]}|Z_t^{\veps}-z_t^k|>\delta\right\}\right)<\delta,
\quad
\mbP\left(\bigcap_{k=1}^N\left\{\sup_{t\in[0,T]}|\bar{Z}_t-z_t^k|>\delta\right\}\right)<\delta.
\end{equation*}
\end{lemma}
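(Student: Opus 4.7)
The plan is to deduce this purely from the definition of tightness together with the total boundedness of compact subsets of $C([0,T];\R^{d_1})$; no properties of the specific processes $Z^\veps$ or $\bar Z$ are needed beyond the hypothesis that the family $\{Z^\veps,\bar Z:0<\veps\leq1\}$ is tight in $C([0,T];\R^{d_1})$ with respect to the uniform metric $\rho(\varphi,\psi):=\sup_{t\in[0,T]}|\varphi(t)-\psi(t)|$.

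First, I would invoke tightness to pick, for the given $\delta>0$, a compact set $K\subset C([0,T];\R^{d_1})$ with
\[
\mbP(Z^\veps\notin K)<\delta\quad\text{and}\quad \mbP(\bar Z\notin K)<\delta
\qquad\text{for every }0<\veps\leq1.
\]
(One may take a single $K$ that works simultaneously for $\{Z^\veps\}$ and $\{\bar Z\}$, since the family in the statement is tight as a whole; equivalently, intersect two such compacts.)

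Next, since $(C([0,T];\R^{d_1}),\rho)$ is a metric space and $K$ is compact, $K$ is totally bounded: there exist finitely many elements $z^1,\dots,z^N\in C([0,T];\R^{d_1})$ (which may even be chosen in $K$) such that the open $\rho$-balls $B(z^k,\delta):=\{\varphi:\rho(\varphi,z^k)\leq\delta\}$, $k=1,\dots,N$, cover $K$. Consequently, for every $\varphi\in K$ there is some $k$ with $\sup_{t\in[0,T]}|\varphi(t)-z^k(t)|\leq\delta$, i.e.\ $\varphi$ is \emph{not} in $\bigcap_{k=1}^N\{\rho(\varphi,z^k)>\delta\}$.

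Applying this pointwise in $\omega$, I get the set inclusion
\[
\Bigl\{\omega:\sup_{t\in[0,T]}|Z_t^\veps(\omega)-z_t^k|>\delta\text{ for every }k=1,\dots,N\Bigr\}
\subset\{Z^\veps\notin K\},
\]
and analogously for $\bar Z$. Taking probabilities and using the tightness estimate yields
\[
\mbP\!\left(\bigcap_{k=1}^N\Bigl\{\sup_{t\in[0,T]}|Z_t^\veps-z_t^k|>\delta\Bigr\}\right)\leq\mbP(Z^\veps\notin K)<\delta,
\]
and the same bound for $\bar Z$, uniformly in $0<\veps\leq1$, which is the claim. There is no real obstacle here; the only point to watch is to take the compact $K$ uniform for both families, so that the same finite net $z^1,\dots,z^N$ works for $Z^\veps$ and $\bar Z$ simultaneously, which is immediate from the hypothesis that $\{Z^\veps,\bar Z:0<\veps\leq1\}$ is tight as a single family.
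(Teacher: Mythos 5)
Your argument is correct and complete: tightness of the family gives a compact $K$ with small complementary probability uniformly in $\veps$; total boundedness of $K$ supplies the finite $\delta$-net $z^1,\dots,z^N$; and the event that \emph{all} distances $\sup_t|Z^\veps_t - z^k_t|$ exceed $\delta$ is contained in $\{Z^\veps\notin K\}$, which finishes the bound. (One small slip: you call $B(z^k,\delta)=\{\varphi:\rho(\varphi,z^k)\le\delta\}$ an ``open'' ball, but the $\le$ makes it closed; this does not affect anything.) The paper itself offers no proof of this lemma---it simply cites Proposition~3 of Pardoux--Veretennikov (2001)---so there is no in-paper argument to compare against. Your derivation is the standard one from the definitions of tightness and total boundedness, and is exactly what the cited reference amounts to, so the proposal is in full agreement in spirit with what the paper relies on.
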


\begin{remark}\label{GRem}\rm
(i) Note that we also have
\begin{equation}\label{ExG}
GG^T(x)=\int_{\R^{d_2}}\left(f(x,y)-\bar{f}(x)\right)\psi(x,y)\mu^x(\d y), ~\forall x\in\R^{d_1},
\end{equation}
where $\psi(x,y)$ is the solution to
$\mfL_2(x,y)\psi(x,y)=f(x,y)-\bar{f}(x),~y\in\R^{d_2}$.

(ii) It follows from {\bf(H$_x^2$)}, \eqref{ExG} and Lemma \ref{SolPoisson} that
there exist constants $m>0$ and $C>0$ such that
for any $x\in\R^{d_1}$
\[
|G(x)|\vee|GG^T(x)|\vee|\nabla(GG^T(x))|\leq C(1+|x|^m).
\]
Therefore, we have for any $x_1,x_2\in\R^{d_1}$
\[
|GG^T(x_1)-GG^T(x_2)|\leq C(1+|x_1|^{m}+|x_2|^{m})|x_1-x_2|.
\]
\end{remark}

We are now in a position to prove Theorem \ref{NDth}.
\begin{proof}[Proof of Theorem \ref{NDth}]
Note that it follows from It\^o's formula that
\begin{align}\label{NDeq01}
\varphi(\bar{Z}_t)
&
=\varphi(0)+\int_0^t\langle\nabla\bar{f}(\bar{X}_s)\bar{Z}_s,\nabla\varphi(\bar{Z}_s)\rangle+\frac12 Tr[\nabla^2\varphi(\bar{Z}_s)GG^T(\bar{X}_s)]\d s\\\nonumber
&\quad
+\int_0^t\langle\nabla\varphi(\bar{Z}_s),G(\bar{X}_s)\d \widetilde{W}_s^1\rangle,
\end{align}
\begin{equation}\label{NDeq02}
\varphi(Z^\veps_t)=\varphi(0)+\int_{0}^t \frac{1}{\sqrt{\veps}}\langle f(X_s^\veps,Y_s^\veps)-\bar{f}(\bar{X}_s),\nabla\varphi(Z^\veps_s) \rangle\d s.
\end{equation}
Combining \eqref{NDeq01} and \eqref{NDeq02}, we have
\begin{align*}
&
|\mbE\varphi(Z_t^\veps)-\mbE\varphi(\bar{Z}_t)|\nonumber\\
&
=\Bigg|\mbE\int_{0}^t \bigg(\frac{1}{\sqrt{\veps}}\langle f(X_s^\veps,Y_s^\veps)-\bar{f}(\bar{X}_s),\nabla \varphi(Z^\veps_s) \rangle\nonumber
-\langle\nabla\bar{f}(\bar{X}_s)\bar{Z}_s,\nabla\varphi(\bar{Z}_s) \rangle\\
&\qquad
-\frac12 Tr[\nabla^2_z\varphi(\bar{Z}_s)GG^T(\bar{X}_s)]\bigg)\d s\Bigg|
\leq \msI_1+\msI_2+\msI_3,
\end{align*}
where
\[
\msI_1:=\left|\mbE\int_{0}^t\frac{1}{\sqrt{\veps}}\langle f(X_s^\veps,Y_s^\veps)-\bar{f}(X_s^\veps),\nabla\varphi(Z_s^\veps)\rangle-\frac12 Tr[\nabla^2\varphi(Z_s^\veps)GG^T(X_s^\veps)]\d s\right|,
\]
\begin{align*}
\msI_2:
&=\left|\mbE\int_0^t\frac{1}{\sqrt{\veps}}\langle \bar{f}(X_s^\veps)-\bar{f}(\bar{X}_s),\nabla\varphi(Z_s^\veps)\rangle-\langle\nabla\bar{f}(\bar{X}_s)\bar{Z}_s,\nabla\varphi(\bar{Z}_s)\rangle\d s\right|\\
&
=\left|\mbE\int_0^t\langle \nabla\bar{f}(\bar{X}_s+\iota(X_s^\veps-\bar{X}_s))Z_s^\veps,\nabla\varphi(Z_s^\veps)\rangle-\langle\nabla\bar{f}(\bar{X}_s)\bar{Z}_s,\nabla\varphi(\bar{Z}_s)\rangle\d s\right|,
\end{align*}
\[
\msI_3:=\left|\mbE\int_0^t\frac12 Tr[\nabla^2\varphi(Z_s^\veps)GG^T(X_s^\veps)]
-\frac12 Tr[\nabla^2\varphi(\bar{Z}_s)GG^T(\bar{X}_s)]\d s\right|
\]
for some $\iota\in[0,1]$.
Thus, we only need to demonstrate that $\lim_{\veps\rightarrow0}\msI_1=0$,
$\lim_{\veps\rightarrow0}\msI_2=0$, and $\lim_{\veps\rightarrow0}\msI_3=0$
to establish our result. To this end, we will break the proof into three steps.

{\bf{(Step 1)}}
Let
$
\phi(x,y,z):=\langle f(x,y),\nabla\varphi(z)\rangle,~
\bar{\phi}(x,z):=\langle \bar{f}(x),\nabla\varphi(z)\rangle,~
\forall(x,y,z)\in\R^{d_1+d_2+d_1}$.
It is obvious that
$\Psi(x,y,z):=-\langle \psi(x,y),\nabla\varphi(z)\rangle$ is the solution to
$$
\mfL_2\Psi(x,y,z)=-\left(\phi(x,y,z)-\bar{\phi}(x,z)\right),
$$
where $\psi(x,y)$ is the solution to
$
\mfL_2\psi(x,y)=\left(f(x,y)-\bar{f}(x)\right).
$
Then by Lemma \ref{SolPoisson}, one sees that $\Psi\in C^{2,2,2}(\R^{d_1+d_2+d_1})$ and
that there exist constants $C,m_1,m_2>0$ such that
\begin{align}
&
|\Psi(x,y,z)|+|\partial_x\Psi(x,y,z)|+|\partial_x^2\Psi(x,y,z)|\nonumber\\
&\quad
+|\partial_z\Psi(x,y,z)|+|\partial_z^2\Psi(x,y,z)|+|\partial_y\Psi(x,y,z)|\label{NDeq04-1}\\\nonumber
&\leq C\left(1+|x|^{m_1}+|y|^{m_2}\right).
\end{align}
Applying It\^o's formula to $t\mapsto\Psi(X_t^\veps,Y_t^\veps,Z_t^\veps)$, we have
\begin{align*}
\mbE\Psi(X_t^\veps,Y_t^\veps,Z_t^\veps)-\mbE\Psi(x,y,0)
&
=\mbE\int_{0}^t\Big(\mfL_1\Psi(X_s^\veps,Y_s^\veps,Z_s^\veps)
+\frac1\veps\mfL_2\Psi(X_s^\veps,Y_s^\veps,Z_s^\veps)
\\\nonumber
&\qquad
+\frac{1}{\sqrt{\veps}}\langle f(X_s^\veps,Y_s^\veps)-\bar{f}(\bar{X}_s),\partial_z\Psi(X_s^\veps,Y_s^\veps,Z_s^\veps)\rangle\Big)\d s.
\end{align*}
Therefore, by \eqref{NDeq04-1}, {\bf(H$_x^2$)}, {\bf(H$_x^3$)}, H\"older's inequality,
Lemma \ref{Pest} and Theorem \ref{Opcrth} we obatin
\begin{align}
\msI_1
&
=\Bigg|\sqrt{\veps}\left(\mbE\Psi(x,y,0)-\mbE\Psi(X_{t}^\veps,Y_{t}^\veps,Z_{t}^\veps)
+\mbE\int_{0}^t\mfL_1\Psi(X_s^\veps,Y_s^\veps,Z_s^\veps)\d s\right)\nonumber\\
&\quad
+\mbE\int_{0}^t\langle f(X_s^\veps,Y_s^\veps)-\bar{f}(\bar{X}_s),\partial_z\Psi(X_s^\veps,Y_s^\veps,Z_s^\veps)\rangle-\frac12 Tr[\nabla^2\varphi(Z_s^\veps)GG^T(X_s^\veps)]\d s\Bigg|\nonumber\\
&
\leq C\sqrt{\veps}+\left|\mbE\int_{0}^t\langle \bar{f}(X_s^\veps)-\bar{f}(\bar{X}_s),\partial_z\Psi(X_s^\veps,Y_s^\veps,Z_s^\veps)\rangle \right|+\msI_1^3\label{NDeq04-3}\\\nonumber
&
\leq C\sqrt{\veps}+\left|\mbE\int_{0}^t|\nabla \bar{f}(\bar{X}_s+\iota(X_s^\veps-\bar{X}_s))||X_s^\veps-\bar{X}_s||\partial_z\Psi(X_s^\veps,Y_s^\veps,Z_s^\veps)|\d s\right|+\msI_1^3\\\nonumber
&
\leq C\sqrt{\veps}+\msI_1^3
\end{align}
for some $\iota\in[0,1]$, where
$$
\msI_1^3:=\left|\mbE\int_{0}^t\langle f(X_s^\veps,Y_s^\veps)-\bar{f}(X^\veps_s),\partial_z\Psi(X_s^\veps,Y_s^\veps,Z_s^\veps)\rangle-\frac12 Tr[\nabla^2\varphi(Z_s^\veps)GG^T(X_s^\veps)]\d s\right|.
$$
Note that
\begin{align*}
&
\langle f(x,y)-\bar{f}(x),\partial_z\Psi(x,y,z)\rangle-\frac12Tr[\nabla^2\varphi(z)GG^T(x)]\\
&
=\sum_{i,j=1}^{d_1}\left(f_i(x,y)-\bar{f}_i(x)\right)\psi_j(x,y)\partial_{z_jz_i}^2\varphi(z)\\
&\quad
-\sum_{i,j=1}^{d_1}\int_{\R^{d_2}}\left(f_i(x,y)-\bar{f}_i(x)\right)\psi_j(x,y)\partial_{z_jz_i}^2\varphi(z)
\mu^x(\d y).
\end{align*}
By Lemma \ref{Fluctuation}, one sees that
$\lim_{\veps\rightarrow0}\msI_1^3=0$,
which by \eqref{NDeq04-3} implies that
$\lim_{\veps\rightarrow0}\msI_1=0$.

{\bf{(Step 2)}}
Now we show that $\lim_{\veps\rightarrow0}\msI_2=0$.
By Lemmas \ref{TightLem1} and \ref{TightLem2}, for any $\delta>0$
there exist $z^1,...,z^N\in C([0,T];\R^{d_1})$ such that for any $0<\veps\leq1$
\begin{equation}\label{NDeq04-4}
\mbP\left( \bigcap_{k=1}^N\left\{\sup_{t\in[0,T]}|Z_t^\veps-z_t^k|>\delta\right\}\right)<\delta,
\end{equation}
\begin{equation}\label{NDeq05}
\mbP\left(\bigcap_{k=1}^N\left\{\sup_{t\in[0,T]}|\bar{Z}_t-z_t^k|>\delta\right\}\right)<\delta.
\end{equation}
Define
\[
\Omega_{1,k}^\veps:=\left\{\sup_{t\in[0,T]}|Z_t^\veps-z_t^k|<2\delta,\sup_{t\in[0,T]}|\bar{Z}_t-z_t^k|<2\delta\right\},
\]
\[
\Omega_2^\veps:=\bigcap_{k=1}^N\left\{\sup_{t\in[0,T]}|Z_t^\veps-z_t^k|>\delta\right\},
\quad
\Omega_3:=\bigcap_{k=1}^N\left\{\sup_{t\in[0,T]}|\bar{Z}_t-z_t^k|>\delta\right\}.
\]
Let
$
\widetilde{\Omega}_{1,1}^\veps=\Omega_{1,1}^\veps,~
\widetilde{\Omega}_{1,k}^\veps=\Omega_{1,k}^\veps\backslash\left(\cup_{i=1}^{k-1}\widetilde{\Omega}_{1,i}^\veps\right),k=2,...,N.
$
It is obvious that for any $0<\veps\leq1$
$$
\Omega=\cup_{k=1}^N\widetilde{\Omega}_{1,k}^\veps\cup\Omega_2^\veps\cup\Omega_3,
\quad
\widetilde{\Omega}_{1,i}^\veps\cap\widetilde{\Omega}_{1,j}^\veps=\emptyset, i\neq j.
$$
Therefore,
\begin{align}\label{NDeq06}
\msI_2
&
=\left|\mbE\int_0^t\langle\nabla\bar{f}(\bar{X}_s+\iota(X_s^\veps-\bar{X}_s))Z_s^\veps,\nabla\varphi(Z_s^\veps)\rangle
-\langle\nabla\bar{f}(\bar{X}_s)\bar{Z}_s,\nabla\varphi(\bar{Z}_s)\rangle\d s\right|\\\nonumber
&
\leq\msI_{2,1}+\msI_{2,2}+\sum_{k=1}^N\msI_{2,3}^k,
\end{align}
where
\begin{equation*}
\msI_{2,1}:=\left|\mbE\left[\chi_{\Omega_2^\veps}\int_0^t\langle\nabla\bar{f}(\bar{X}_s+\iota(X_s^\veps-\bar{X}_s))Z_s^\veps,\nabla\varphi(Z_s^\veps)\rangle
-\langle\nabla\bar{f}(\bar{X}_s)\bar{Z}_s,\nabla\varphi(\bar{Z}_s)\rangle\d s\right]\right|,
\end{equation*}
\begin{equation*}
\msI_{2,2}:=\left|\mbE\left[\chi_{\Omega_3}\int_0^t\langle\nabla\bar{f}(\bar{X}_s+\iota(X_s^\veps-\bar{X}_s))Z_s^\veps,\nabla\varphi(Z_s^\veps)\rangle
-\langle\nabla\bar{f}(\bar{X}_s)\bar{Z}_s,\nabla\varphi(\bar{Z}_s)\rangle\d s\right]\right|,
\end{equation*}
\begin{equation*}
\msI_{2,3}^k:=\left|\mbE\left[\chi_{\widetilde{\Omega}_{1,k}^\veps}\int_0^t\langle\nabla\bar{f}(\bar{X}_s+\iota(X_s^\veps-\bar{X}_s))Z_s^\veps,\nabla\varphi(Z_s^\veps)\rangle
-\langle\nabla\bar{f}(\bar{X}_s)\bar{Z}_s,\nabla\varphi(\bar{Z}_s)\rangle\d s\right]\right|.
\end{equation*}

In view of \eqref{NDeq01},
Remark \ref{GRem}, the Burkholder-Davis-Gundy inequality and \eqref{PestAX}, for any $p>1$ there exists $m_1>0$ such that
\begin{align}\label{NDeq07}
&
\mbE\left|\int_0^t\langle\nabla\bar{f}(\bar{X}_s)\bar{Z}_s,
\nabla\varphi(\bar{Z}_s)\rangle\d s\right|^p
\\\nonumber
&
=\mbE\left|\varphi(\bar{Z}_t)-\varphi(0)-\int_0^t\frac12 Tr[\nabla^2\varphi(\bar{Z}_s)GG^T(\bar{X}_s)]\d s
-\int_0^t\langle\nabla\varphi(\bar{Z}_s),G(\bar{X}_s)\d \widetilde{W}_s^1\rangle\right|^p\\\nonumber
&
\leq C_{T,p,\|\nabla\varphi\|_\infty}\left(\|\varphi\|_\infty^p+\int_0^T\mbE|\bar{X}_s|^{m_1}\d s+1\right)\leq C_{p,T}.
\end{align}

It follows from H\"older's inequality, \eqref{NDeq04-4} and \eqref{NDeq07} that
\begin{align*}
&
\msI_{2,1}+\msI_{2,2}\\
&
\leq\mbE\left[\left(\chi_{\Omega_2^\veps}+\chi_{\Omega_3}\right)
\left|\int_0^t\langle
\nabla\bar{f}(\bar{X}_s+\iota(X_s^\veps-\bar{X}_s))Z_s^\veps,\nabla\varphi(Z_s^\veps)
\rangle\d s\right|\right]\\
&\quad
+\mbE\left[\left(\chi_{\Omega_2^\veps}+\chi_{\Omega_3}\right)\left|\int_0^t\langle
\nabla\bar{f}(\bar{X}_s)\bar{Z}_s,\nabla\varphi(\bar{Z}_s)\rangle\d s\right|\right]\\
&
\leq\left[\left(\mbP(\{\Omega_2^\veps\})\right)^{\frac13}+\left(\mbP(\{\Omega_3\})\right)^{\frac13}\right]
\left(\mbE\left|\int_0^t\langle\nabla\bar{f}(\bar{X}_s+\iota(X_s^\veps-\bar{X}_s))Z_s^\veps,
\nabla\varphi(Z_s^\veps)\rangle\d s\right|^{\frac32}\right)^{\frac23}\\
&\quad
+\left[\left(\mbP(\{\Omega_2^\veps\})\right)^{\frac{1}{2}}
+\left(\mbP(\{\Omega_3\})\right)^{\frac{1}{2}}\right]
\left(\mbE\left|\int_0^t\langle\nabla\bar{f}(\bar{X}_s)\bar{Z}_s,
\nabla\varphi(\bar{Z}_s)\rangle\d s\right|^{2}\right)^{\frac{1}{2}}\\
&
\leq C_T\delta^{\frac{1}{3}}\left(\mbE\int_0^T|\nabla\bar{f}(\bar{X}_s+\iota(X_s^\veps-\bar{X}_s))|^{\frac32}
|Z_s^\veps|^{\frac32}\|\nabla\varphi\|^{\frac32}_\infty\d s\right)^{\frac23}
+C_T\delta^{\frac{1}{2}},
\end{align*}
which by H\"older's inequality and {\bf (H$_x^2$)} implies that
\begin{align}\label{NDeq08}
\msI_{2,1}+\msI_{2,2}
&
\leq C_T\delta^{\frac13}\left(\mbE\int_0^T|Z_s^\veps|^{2}
+|\bar{X}_s|^{4\theta_1}
+|X_s^\veps|^{4\theta_1}\d s\right)^{\frac23}+C_T\delta^{\frac{1}{2}}.
\end{align}
Combining \eqref{NDeq08}, \eqref{PestX}, \eqref{PestAX}
and \eqref{Opcrsfeq}, we get
\begin{equation}\label{NDeq09}
\msI_{2,1}+\msI_{2,2}\leq C_T\delta^{\frac{1}{3}}.
\end{equation}

Note that for all $1\leq k\leq N$
\begin{align}
\msI_{2,3}^k:
&
\leq\left|\mbE\left[\chi_{\widetilde{\Omega}_{1,k}^\veps}\int_0^t\langle\left(\nabla\bar{f}(\bar{X}_s+\iota(X_s^\veps-\bar{X}_s))-\nabla\bar{f}(\bar{X}_s)\right)Z_s^\veps,\nabla\varphi(Z_s^\veps)\rangle
\d s\right]\right|\nonumber\\
&\quad
+\left|\mbE\left[\chi_{\widetilde{\Omega}_{1,k}^\veps}\int_0^t\langle\nabla\bar{f}(\bar{X}_s)Z_s^\veps,\nabla\varphi(Z_s^\veps)-\nabla\varphi(\bar{Z}_s)\rangle
\d s\right]\right|\label{NDeq10}\\\nonumber
&\quad
+\left|\mbE\left[\chi_{\widetilde{\Omega}_{1,k}^\veps}\int_0^t\langle\nabla\bar{f}(\bar{X}_s)(Z_s^\veps-\bar{Z}_s),\nabla\varphi(\bar{Z}_s)\rangle
\d s\right]\right|=:\sum_{i=1}^3\mfJ_i.
\end{align}
First of all, by H\"older's inequality, {\bf(H$_x^2$)},
Lemma \ref{Pest} and Theorem \ref{Opcrth} we have
\begin{align}
\mfJ_1
&
\leq\|\nabla\varphi\|_\infty\mbE\left[\chi_{\widetilde{\Omega}_{1,k}^\veps}\int_0^t\left|\nabla^2\bar{f}(\bar{X}_s+\iota\iota'(X_s^\veps-\bar{X}_s))\right|\left|X_s^\veps-\bar{X}_s\right||Z_s^\veps|\|\nabla\varphi\|_\infty\d s\right]\nonumber\\
&
\leq\|\nabla\varphi\|_\infty\left(\mbE\int_0^T|Z_s^\veps|^2\d s\right)^{\frac12}
\left(\mbE\int_0^T|X_s^\veps-\bar{X}_s|^2\d s\right)^{\frac14}\label{NDeq11}\\\nonumber
&\qquad
\times\left(\mbE\int_0^T|\nabla^2\bar{f}(\bar{X}_s+\iota\iota'(X_s^\veps-\bar{X}_s))|^{4}
\left(|X_s^\veps|+|\bar{X}_s|\right)^{2}\d s\right)^{\frac14}
\leq C_{T}\veps^{\frac14},
\end{align}
where $\iota'\in[0,1]$.
And in view of H\"older's inequality  we have
\begin{align}
\mfJ_2+\mfJ_3
&
\leq\mbE\left[\chi_{\widetilde{\Omega}_{1,k}^\veps}\int_0^t\left|\nabla\bar{f}(\bar{X}_s)\right|(|Z_s^\veps|\|\nabla^2\varphi\|_\infty+\|\nabla\varphi\|_\infty)|Z_s^\veps-\bar{Z}_s|
\d s\right]\label{NDeq12}\\\nonumber
&
\leq C\delta\mbE\left[\chi_{\widetilde{\Omega}_{1,k}^\veps}\int_0^T|\nabla\bar{f}(\bar{X}_s)|(|Z_s^\veps|+1)\d s\right].
\end{align}

Therefore, by \eqref{NDeq06}, \eqref{NDeq09}, \eqref{NDeq10}, \eqref{NDeq11}, \eqref{NDeq12},
H\"older's inequality, {\bf(H$_x^2$)}, Lemma \ref{Pest} and Theorem \ref{Opcrth} one sees that
\begin{align*}
\msI_2
&
\leq C_T\left(\delta^{\frac13}+N\veps^{\frac14}\right)
+\delta C\mbE\int_0^T|\nabla\bar{f}(\bar{X}_s)|\left(|Z_s^\veps|+1\right)\d s
\leq C_T\left(\delta^{\frac14}+N\veps^{\frac14}\right),
\end{align*}
which implies that
$
\lim_{\veps\rightarrow0}\msI_2=0
$
by first letting $\veps\rightarrow0$ and then letting $\delta\rightarrow0$.

{\bf{(Step 3)}}
Now we estimate $\msI_3$:
\begin{align}
\msI_3:
&
=\left|\mbE\int_0^t\frac12 Tr[\nabla^2\varphi(Z_s^\veps)GG^T(X_s^\veps)]
-\frac12 Tr[\nabla^2\varphi(\bar{Z}_s)GG^T(\bar{X}_s)]\d s\right|\nonumber\\\nonumber
&
\leq\left|\mbE\left[\chi_{\Omega_2^\veps}\int_0^t\frac12 Tr[\nabla^2\varphi(Z_s^\veps)GG^T(X_s^\veps)]
-\frac12 Tr[\nabla^2\varphi(\bar{Z}_s)GG^T(\bar{X}_s)]\d s\right]\right|\\
&\quad
+\left|\mbE\left[\chi_{\Omega_3}\int_0^t\frac12 Tr[\nabla^2\varphi(Z_s^\veps)GG^T(X_s^\veps)]
-\frac12 Tr[\nabla^2\varphi(\bar{Z}_s)GG^T(\bar{X}_s)]\d s\right]\right|\label{NDeq13}\\\nonumber
&\quad
+\sum_{k=1}^N\left|\mbE\left[\chi_{\widetilde{\Omega}_{1,k}^\veps}\int_0^t\frac12 Tr[\nabla^2\varphi(Z_s^\veps)GG^T(X_s^\veps)]
-\frac12 Tr[\nabla^2\varphi(\bar{Z}_s)GG^T(\bar{X}_s)]\d s\right]\right|\\\nonumber
&
=:\msI_{3,1}+\msI_{3,2}+\sum_{k=1}^N\msI_{3,3}^k.
\end{align}
Thanks to H\"older's inequality, \eqref{NDeq04-4}, \eqref{NDeq05}, Remark \ref{GRem},
\eqref{PestX} and \eqref{PestAX}, we have
\begin{align}\label{NDeq14}
\msI_{3,1}+\msI_{3,2}\leq C_T\delta^{1/2}.
\end{align}
And it follows from Remark \ref{GRem}, H\"older's inequality, Lemma \ref{Pest}
and Theorem \ref{Opcrth} that there exists $m>0$ such that
\begin{align}
\msI_{3,3}^k:
&
\leq \left|\mbE\left[\chi_{\widetilde{\Omega}_{1,k}^\veps}\int_0^t\frac12 Tr\left[\left(\nabla^2\varphi(Z_s^\veps)-\nabla^2\varphi(z_s^k)\right)GG^T(X_s^\veps)\right]\d s\right]\right|\nonumber\\\nonumber
&\quad
+\left|\mbE\left[\chi_{\widetilde{\Omega}_{1,k}^\veps}\int_0^t\frac12 Tr[\nabla^2\varphi(z_s^k)GG^T(X_s^\veps)]
-\frac12 Tr[\nabla^2\varphi(z_s^k)GG^T(\bar{X}_s)]\d s\right]\right|\\\nonumber
&\quad
+\left|\mbE\left[\chi_{\widetilde{\Omega}_{1,k}^\veps}\int_0^t\frac12 Tr\left[\left(\nabla^2\varphi(\bar{Z}_s)-\nabla^2\varphi(z_s^k)\right)GG^T(\bar{X}_s)\right]\d s\right]\right|\\
&
\leq C\left|\mbE\left[\chi_{\widetilde{\Omega}_{1,k}^\veps}\int_0^t
\|\nabla^3\varphi\|_\infty|Z_s^\veps-z_s^k|\left(1+|X_s^\veps|^{m}\right)
\d s\right]\right|\label{NDeq15}\\\nonumber
&\quad
+\left|\mbE\left[\chi_{\widetilde{\Omega}_{1,k}^\veps}\int_0^t
\|\nabla^2\varphi\|_\infty\left(1+|X_s^\veps|^{m}+|\bar{X}_s|^{m}\right)|X_s^\veps-\bar{X}_s|\d s\right]\right|\\\nonumber
&\quad
+\left|\mbE\left[\chi_{\widetilde{\Omega}_{1,k}^\veps}\int_0^t
\|\nabla^3\varphi\|_\infty|\bar{Z}_s-z_s^k|\left(1+|\bar{X}_s|^m\right)\d s\right]\right|\\\nonumber
&
\leq C_T\delta\left(\mbE\left[\chi_{\widetilde{\Omega}_{1,k}^\veps}\int_0^T
\left(1+|X_s^\veps|^{m}+|\bar{X}_s|^{m}\right)\d s\right]\right)+C_T\veps^{\frac12}.
\end{align}

Therefore, employing \eqref{NDeq13}, \eqref{NDeq14}, \eqref{NDeq15},
H\"older's inequality, {\bf(H$_x^2$)} and Lemma \ref{Pest}, we have
\begin{align*}
\msI_3
&
\leq C_T\left(\delta^{1/2}+N\veps^{\frac12}\right)+C_T\delta
\left(\mbE\int_0^T\left(1+|X_s^\veps|^{m}+|\bar{X}_s|^{m}\right)\d s\right)
\leq C_T(\delta^{1/2}+N\veps^{1/2}).
\end{align*}
Therefore, firstly letting $\veps\rightarrow0$, then taking $\delta\rightarrow0$, we obtain
$
\lim_{\veps\rightarrow0}\msI_3=0.
$
\end{proof}

\section{The global averaging principle and second averaging principle}

In Section \ref{PDS}, we begin with recalling some well-known definitions and results about
autonomous and nonautonomous dynamical systems (see e.g. \cite{KR2011}). Subsequently,
we prove Theorem \ref{GAPth} in Section \ref{PGAPth}.
Finally, we investigate the second averaging principle in Section \ref{PSAPth}.

\subsection{Preliminaries for dynamical systems}\label{PDS}
Let $(\mcX,d)$ be a complete metric space, and
$(\mcP,d_{\mcP})$ be a metric space.

\begin{definition}\rm
Let $T=\R$ or $\Z$.
A {\em semi-dynamical system} is defined as a continuous function $\phi:T^+\times\mcX\rightarrow\mcX$ that satisfies
$\phi(0,x)=x,~\forall x\in\mcX$,
and $\phi(t+s,x)=\phi(t,\phi(s,x))$ for all $s,t\in T^+$ and $x\in\mcX$.
\end{definition}

\begin{definition}\rm
We say a nonempty compact subset $A\subset\mcX$ is a {\em global attractor} of a semi-dynamical system $\phi$ on $\mcX$ if it is $\phi$-invariant and attracts bounded sets, i.e.
$\phi(t,A)=A$ for all $t\in T^+$
and
$
\lim_{t\rightarrow\infty}{\rm dist}\left(\phi(t,D),A\right)=0$
for any bounded subset $D\subset\mcX$.
\end{definition}

\begin{definition}\label{DefNDY}\rm
A {\em nonautonomous dynamical system}
$\left(\sigma,\varphi\right)$ (in short, $\varphi$) comprises two components:
\begin{enumerate}
  \item A {\em dynamical system} $\sigma$ on $\mathcal P$ with time
      set $T=\mathbb Z$ or $\R$, i.e.
$\sigma_{0}(\cdot)=Id_{\mathcal P}$,
$\sigma_{t+s}(p)=\sigma_{t}
(\sigma_{s}(p))$ for all $t,s\in T$
and $p\in \mathcal P$, and the mapping
          $(t,p)\mapsto\sigma_{t}(p)$ is continuous.
  \item
  A {\em cocycle} $\varphi:T^{+}\times\mathcal P\times \mathcal X
        \rightarrow \mathcal X$ satisfies
        \begin{enumerate}
          \item [(1)] $\varphi(0,p,x)=x$ for all
          $(p,x)\in \mathcal P\times\mathcal X$,
          \item [(2)] $\varphi(t+s,p,x)=\varphi(t,\sigma_{s}(p),
          \varphi(s,p,x))$ for all $s,t\in T^{+}$
          and $(p,x)\in\mathcal P\times\mathcal X$,
          \item [(3)] the mapping
          $(t,p,x)\mapsto\varphi(t,p,x)$ is continuous.
        \end{enumerate}
\end{enumerate}
Here $\mathcal P$ is called the {\em base} or {\em parameter space}
and $\mathcal X$ is the {\em fiber} or {\em state space}.
For convenience, we also write $\sigma_{t}(p)$ as $\sigma_{t}p$.

Furthermore, if $\sigma_t(p)(\cdot):=p(t+\cdot)$ then $(\mcP,\R,\sigma)$ is
called a shift dynamical system or Bebutov shift flow.
\end{definition}


\begin{definition}\rm
The autonomous semi-dynamical system $\pi$ on $\mcP\times\mcX$ defined by
\[
\pi:T^+\times\mcP\times\mcX\rightarrow\mcP\times\mcX,
\quad
\pi(t,(p,x)):=\left(\theta_t(p),\varphi(t,p,x)\right)
\]
is called the {\em skew product flow} associated with the nonautonomous dynamical system
$(\theta,\varphi)$.
\end{definition}

\begin{definition}\rm
Let $(\sigma,\phi)$ be a skew product flow on a metric phase space $(\mcX,d)$
with base set $\mcP$. A subset $\mcM$ of the extended phase
space $\mcP\times\mcX$ is called a {\em nonautonomous set}, and for each $p\in\mcP$, the set
$
M_p:=\{x\in\mcX:(p,x)\in\mcM\}
$
is called the $p$-{\em fiber} of $\mcM$. In general, $\mcM$ is said to have a
topological property (such as compactness or closeness) if each fiber of $\mcM$ has this property.
\end{definition}

\begin{definition}\rm
Let $(\sigma,\phi)$ be a skew product flow on a metric space $(\mcX,d)$ with base set $\mcP$.
A family $\mcA=\{A_p\}_{p\in\mcP}$ of nonempty subsets of $\mcX$ is called {\em invariant}
w.r.t. $(\sigma,\phi)$, or $\phi$-{\em invariant},
if
$\phi(t,p,A_{p})=A_{\sigma_t(p)}$ for all
$t\geq0$ and $p\in\mcP$.
\end{definition}

\begin{remark}\rm
The compact set-valued mapping $t\mapsto A_{\sigma_t(p)}$, induced by a $\phi$-invariant family
$(A_p)_{p\in\mcP}$ of compact subsets, is continuous in $t\in\R$ w.r.t. the Hausdorff
metric for each fixed $p\in\mcP$.
\end{remark}

\begin{definition}\label{PullbackA}\rm
Let $(\sigma,\phi)$ be a skew product flow. A nonempty, compact and invariant nonautonomous
set $\mcA$ is called a {\em pullback attractor} if the pullback convergence
\[
\lim_{t\rightarrow\infty}{\text{dist}}_\mcX(\phi(t,\sigma_{-t}(p),D),A_p)=0
\]
holds for every nonempty bounded subset $D$ of $\mcX$ and $p\in\mcP$.
\end{definition}

\begin{lemma}\label{globalALem}
Assume that a semi-dynamical system $\phi$ on $\mcX$ has an absorbing set $B$, i.e. for any bounded subset $D\subset\mcX$, there exists $T=T(D)\in T^+$ such that $\phi(t,D)\subset B$ for all $t\geq T$.
If $B$ is positively invariant, i.e. $\phi(t,B)\subset B$ for all $t\in T^+$, then $\phi$ has a unique attractor $A\subset B$ defined by
$
A=\bigcap_{t\geq0}\phi(t,B).
$

\end{lemma}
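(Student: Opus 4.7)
The plan is to verify directly that the candidate set $A := \bigcap_{t \geq 0}\phi(t,B)$ satisfies the three defining properties of a global attractor and then argue uniqueness. Since $B$ is positively invariant, the semigroup property gives $\phi(t+s,B) = \phi(t,\phi(s,B)) \subset \phi(t,B)$, so $\{\phi(t,B)\}_{t \geq 0}$ is a nested decreasing family of closed subsets of $B$, and $A$ is closed. Nonemptiness and compactness of $A$ follow from the fact that it is the intersection of a decreasing family of nonempty compact sets, using the compactness of $B$ (or asymptotic compactness of $\phi$) implicit in the setting of \cite{KR2011}.

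I would then establish $\phi$-invariance in two steps. The inclusion $\phi(t,A) \subset A$ follows from $A \subset \phi(s,B)$ for every $s \geq 0$: this implies $\phi(t,A) \subset \phi(t+s,B)$ for every $s$, and hence $\phi(t,A) \subset \bigcap_{u \geq t}\phi(u,B) = A$ by nesting. For the reverse inclusion $A \subset \phi(t,A)$, given $a \in A$ and each $s > t$, there exists $b_s \in B$ with $a = \phi(s,b_s) = \phi(t, \phi(s-t, b_s))$; by the compactness argument above, one can extract a convergent subsequence $\phi(s_n - t, b_{s_n}) \to c$, and nestedness forces $c \in A$, while continuity of $\phi(t,\cdot)$ yields $a = \phi(t,c) \in \phi(t,A)$.

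For the attraction property, let $D \subset \mcX$ be bounded. The absorbing hypothesis provides $T_0 = T(D)$ with $\phi(t,D) \subset B$ for $t \geq T_0$. For $t \geq T_0 + s$, the semigroup identity yields $\phi(t,D) = \phi(s, \phi(t-s, D)) \subset \phi(s,B)$, so
\[
{\rm dist}(\phi(t,D), A) \;\leq\; {\rm dist}(\phi(s,B), A).
\]
The decreasing family $\phi(s,B)$ converges to $A$ in the Hausdorff semi-distance as $s \to \infty$, again by compactness; letting $s$ grow inside the right-hand side gives $\lim_{t\to\infty}{\rm dist}(\phi(t,D),A) = 0$.

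Finally, for uniqueness, any other attractor $A' \subset B$ is compact and invariant, hence $A' = \phi(t,A') \subset \phi(t,B)$ for every $t$, giving $A' \subset A$. Conversely, applying the attracting property of $A'$ to the bounded set $A$ together with $\phi$-invariance $\phi(t,A) = A$ yields $A \subset A'$. The main delicate point is the compactness ingredient used in Steps 1 and 2, which relies on a compactness assumption (on $B$ or on the semiflow) implicit in the framework of \cite{KR2011}; once this is granted, the remainder is a straightforward unwinding of the definitions of invariance, absorption and attraction.
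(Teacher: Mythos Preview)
The paper does not give its own proof of this lemma: it is stated in Section~\ref{PDS} as a recalled result from \cite{KR2011}, alongside the other definitions and facts about dynamical systems, and is then simply applied in the proof of Theorem~\ref{GAPth}(ii). So there is nothing to compare against.

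Your argument is the standard one and is essentially correct. A couple of small points worth tightening: you assert early on that $\{\phi(t,B)\}_{t\geq0}$ consists of \emph{closed} sets before invoking compactness of $B$; in fact closedness (and the whole argument) relies on $B$ being compact, which you rightly flag as an implicit hypothesis from \cite{KR2011}. In the uniqueness step you begin with ``any other attractor $A'\subset B$'', but $A'\subset B$ is not part of the definition --- it follows because $A'$ is compact (hence bounded), so absorption gives $\phi(t,A')\subset B$ for large $t$, and invariance $A'=\phi(t,A')$ then yields $A'\subset B$. With these clarifications your proof is complete.
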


\begin{definition}\rm
Let $(\sigma,\phi)$ be a  skew product flow on $\mcX$. A nonempty compact subset $B$
of $\mcX$ is called {\em pullback absorbing} if for each $p\in\mcP$ and every bounded
subset $D$ of $\mcX$, there exists a $T=T(p,D)>0$ such that
$\phi(t,\sigma_{-t}(p),D)\subset B$
for all $t\geq T$.
\end{definition}

\begin{lemma}\label{pullALem}
Let $(\sigma,\phi)$ be a skew product flow on $\mcX$ with a compact pullback absorbing
set $B$ such that
$
\phi(t,p,B)\subset B
$
for all $t\geq0$ and $p\in\mcP$. Then there exists a pullback attractor $\mcA$ with fibers in $B$ uniquely
determined by
$
A_p=\bigcap_{\tau\geq0}\overline{\bigcup_{t\geq\tau}\phi(t,\sigma_{-t}(p),B)}
$
for all $p\in\mcP$.

\end{lemma}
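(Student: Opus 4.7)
The plan is to define the candidate attractor fibrewise by the formula $A_p=\bigcap_{\tau\geq 0}U_\tau(p)$, where $U_\tau(p):=\overline{\bigcup_{t\geq\tau}\phi(t,\sigma_{-t}(p),B)}$, and then verify the three properties in Definition \ref{PullbackA} in turn: nonemptiness and compactness of each fibre, pullback attraction of bounded sets, and $\phi$-invariance. Uniqueness in $B$ then follows from a short standard argument using both properties. The hypothesis that $\phi(t,p,B)\subset B$ for all $(t,p)$ will be used throughout to keep the relevant sets inside the compact set $B$.

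First I would establish nonemptiness and compactness. By positive invariance one has $\phi(t,\sigma_{-t}(p),B)\subset B$ for every $t\geq 0$, so $U_\tau(p)\subset B$. The family $\{U_\tau(p)\}_{\tau\geq 0}$ is decreasing in $\tau$ and consists of closed subsets of the compact set $B$; hence $A_p$ is nonempty, compact and contained in $B$ by the finite intersection property. For pullback attraction, given a bounded set $D\subset \mcX$, the absorbing property gives $T=T(p,D)$ with $\phi(s,\sigma_{-s}(p),D)\subset B$ for all $s\geq T$. For $t\geq\tau+T$, writing $t=\tau+r$ with $r\geq T$ and using the cocycle identity
\begin{equation*}
\phi(t,\sigma_{-t}(p),D)=\phi(\tau,\sigma_{-\tau}(p),\phi(r,\sigma_{-t}(p),D))\subset\phi(\tau,\sigma_{-\tau}(p),B)\subset U_\tau(p),
\end{equation*}
which forces ${\rm dist}_\mcX(\phi(t,\sigma_{-t}(p),D),U_\tau(p))=0$ for $t$ large. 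Letting $\tau\to\infty$ and using the decreasing intersection of compact sets to conclude ${\rm dist}_\mcX(U_\tau(p),A_p)\to 0$ will yield the pullback convergence.

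The main obstacle is the invariance $\phi(t,p,A_p)=A_{\sigma_t(p)}$, which requires careful juggling of the cocycle and compactness of $B$. For the inclusion $\phi(t,p,A_p)\subset A_{\sigma_t(p)}$, take $x\in A_p$; pick sequences $s_n\to\infty$ and $x_n\in B$ with $\phi(s_n,\sigma_{-s_n}(p),x_n)\to x$, apply continuity of $\phi(t,p,\cdot)$, and rewrite via the cocycle as $\phi(t+s_n,\sigma_{-(t+s_n)}(\sigma_t(p)),x_n)$; this places the limit in every $U_{\tau}(\sigma_t(p))$. For the reverse inclusion, take $y\in A_{\sigma_t(p)}$ with approximating $\phi(s_n,\sigma_{-s_n}(\sigma_t(p)),x_n)\to y$ and $s_n\geq t$; rewrite, using $\sigma_{-s_n}\sigma_t=\sigma_{-(s_n-t)}$, as
\begin{equation*}
\phi(s_n,\sigma_{-(s_n-t)}(p),x_n)=\phi(t,p,\phi(s_n-t,\sigma_{-(s_n-t)}(p),x_n)).
\end{equation*}
The inner iterates lie in $B$ for $n$ large by absorbing, so compactness of $B$ lets me extract a subsequential limit $x\in B$ with $\phi(t,p,x)=y$ by continuity, and membership $x\in A_p$ will follow by a diagonal argument using the same approximants to show $x\in U_\tau(p)$ for every $\tau$.

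Finally, for uniqueness, if $\mcA'=\{A'_p\}$ is another pullback attractor with $A'_p\subset B$, then invariance gives $\phi(t,\sigma_{-t}(p),A'_{\sigma_{-t}(p)})=A'_p$; combining with $A'_{\sigma_{-t}(p)}\subset B$ and pullback attraction of $\mcA$ on the bounded set $B$ yields $A'_p\subset A_p$, and the symmetric argument yields equality. The only genuinely delicate part is the cocycle manipulation above; everything else follows the classical scheme for compact pullback attractors, e.g.\ as in \cite{KR2011}.
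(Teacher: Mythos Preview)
The paper does not prove this lemma; it is stated in Section~\ref{PDS} among ``well-known definitions and results about autonomous and nonautonomous dynamical systems (see e.g.\ \cite{KR2011})'' and is simply recalled from the literature without argument. Your proposal is the standard textbook proof (essentially what one finds in \cite{KR2011}): nested closed subsets of the compact $B$ give nonemptiness and compactness of each fibre, absorbing plus the cocycle identity give pullback attraction, and the two-sided invariance is checked via continuity and sequential compactness of $B$. The argument is correct, and since the paper offers no proof of its own there is nothing further to compare.
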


\subsection{Proof of Theorem \ref{GAPth}}\label{PGAPth}
In order to prove Theorem \ref{GAPth}, we need the following decay estimates of
solutions to \eqref{sfmain} and \eqref{avemain}
under the dissipativity condition {\bf (H$_x^5$)}.

\begin{lemma}\label{BouXY}
Assume that {\bf(H$_y^1$)}, {\bf(H$_y^3$)} and {\bf(H$_x^2$)}--{\bf(H$_x^5$)} hold.
If $g\in C_b(\R^{d_1+d_2})$, then for any  $(\zeta^\veps,\xi^\veps)\in\mcL^2(\Omega,\msF_s,\mbP;\R^{d_1+d_2})$
there exists a unique solution $V_{s,t}^\veps(\zeta^\veps,\xi^\veps):=(X_{s,t}^\veps(\zeta^\veps),Y_{s,t}^\veps(\xi^\veps))$ to \eqref{eqsys}.
Furthermore, there exists $\veps_0>0$ such that for all
$p\geq1$ and $0<\veps\leq\veps_0$
\begin{align}\label{BouY2p}
\mbE|Y_{s,t}^\veps(\xi^\veps)|^{2p}\leq
&
\mbE|\xi^\veps|^{2p}{\rm e}^{-\frac14\veps^{-2\alpha}\eta p(t-s)}
+\varpi(p)\left(1-{\rm e}^{-\frac14\veps^{-2\alpha}\eta p(t-s)}\right),
\end{align}
and
\begin{align*}
\mbE|X_{s,t}^\veps(\zeta^\veps)|^2
&
\leq \mbE|\zeta^\veps|^2{\rm e}^{-\lambda_1(t-s)}
+\frac{K_5\varpi(\theta/2)+K_4}{\lambda_1}\left(1-{\rm e}^{-\lambda_1(t-s)}\right)\\\nonumber
&\quad
+K_5\mbE|\xi^\veps|^\theta\left(\left[\lambda_1^{-1}\left(1-{\rm e}^{-\lambda_1(t-s)}\right)\right]\wedge {\rm e}^{-\lambda_1(t-s)}\right),
\end{align*}
where $\varpi(p):=\left(8K_1+8(p-1)\|g\|_\infty\right)^p\eta^{-p},~\forall p\geq1$.

Moreover, for any $1<p<\frac{\lambda_1}{2L_\sigma^2}+1$ there exists a constant $C>0$ such that
\begin{align}\label{BouX2p}
\mbE|X_{s,t}^\veps(\zeta^\veps)|^{2p}
&
\leq \mbE|\zeta^\veps|^{2p}{\rm e}^{-\frac p2(\lambda_1-2(p-1)L_\sigma^2)(t-s)}+C(\mbE|\xi^\veps|^{\theta p}+1).
\end{align}
\end{lemma}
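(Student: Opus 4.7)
The plan is as follows. Existence and uniqueness of $V_{s,t}^\veps(\zeta^\veps,\xi^\veps)$ follows from Lemma \ref{EUSFSDEth} in the appendix (as anticipated in Remark \ref{Rem01}(iv)), provided $\veps$ is small enough that the cross term in {\bf(H$_y^1$)} is dominated, i.e., $\veps^{2\alpha-\beta}\widetilde{\eta}<\eta/2$; this fixes $\veps_0$. For the three moment estimates, the strategy is to apply It\^o's formula to the appropriate power of the process, use the dissipativity assumptions to obtain a linear first-order ODE inequality for the expected moment, and then solve it explicitly to extract an exponentially decaying term in the initial data plus a constant equilibrium level.

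For \eqref{BouY2p}, I would mimic the computation performed in \eqref{PestYB} of the proof of Lemma \ref{Pest}: applying It\^o to $|Y_{s,t}^\veps|^{2p}$, using {\bf(H$_y^1$)} for the drift together with the bound $2\langle b,y\rangle\leq\widetilde{\eta}|y|^2+K_1$, and bounding the lower-order terms $|Y|^{2p-2}$ by Young's inequality against $|Y|^{2p}$, one arrives at
\begin{equation*}
\frac{\d}{\d t}\mbE|Y_{s,t}^\veps|^{2p}\leq -\frac{\veps^{-2\alpha}\eta p}{4}\,\mbE|Y_{s,t}^\veps|^{2p}+\frac{\veps^{-2\alpha}\eta p}{4}\varpi(p),
\end{equation*}
after absorbing the $\veps^{-\beta}K_1$ into the constant for $\veps\leq\veps_0$ (since $\beta<2\alpha$). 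The martingale term vanishes in expectation after a standard localization argument. Solving this linear inequality by the variation-of-constants formula yields exactly \eqref{BouY2p}.

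For the $\mbE|X_{s,t}^\veps|^2$ estimate, I would apply It\^o to $|X_{s,t}^\veps|^2$, use {\bf(H$_x^5$)} to get
\begin{equation*}
\frac{\d}{\d t}\mbE|X_{s,t}^\veps|^2\leq -\lambda_1\mbE|X_{s,t}^\veps|^2+K_5\,\mbE|Y_{s,t}^\veps|^\theta+K_4,
\end{equation*}
then substitute \eqref{BouY2p} with $p=\theta/2$ and integrate via the variation of constants. The constant term and the $\varpi(\theta/2)$ piece produce the middle summand. For the $\mbE|\xi^\veps|^\theta$ contribution one has to estimate $\int_s^t e^{-\lambda_1(t-r)}e^{-c_\veps(r-s)}\,\d r$ with $c_\veps=\frac{\eta\theta}{8}\veps^{-2\alpha}$, and the wedge $\wedge$ in the statement reflects the two natural upper bounds for this integral: bounding $e^{-c_\veps(r-s)}\leq 1$ gives $\lambda_1^{-1}(1-e^{-\lambda_1(t-s)})$, while bounding $e^{-\lambda_1(t-r)}\leq 1$ and noting $c_\veps^{-1}\leq 1$ for $\veps\leq\veps_0$ leads (after a short manipulation) to a bound by $e^{-\lambda_1(t-s)}$; taking the minimum gives the required form.

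Finally, \eqref{BouX2p} is obtained by applying It\^o to $|X_{s,t}^\veps|^{2p}$, again invoking {\bf(H$_x^5$)} together with $2p(p-1)|\sigma|^2|X|^{2p-2}\leq 2(p-1)L_\sigma^2 p|X|^{2p}+C$ from {\bf(H$_x^3$)} and Young's inequality, which produces the effective decay rate $\frac{p}{2}(\lambda_1-2(p-1)L_\sigma^2)>0$ under the hypothesis $p<\frac{\lambda_1}{2L_\sigma^2}+1$, combined with a forcing term of the form $C(1+\mbE|Y|^{\theta p})$ controlled by \eqref{BouY2p}. I expect the main technical obstacle to be the careful bookkeeping that justifies the wedge-type bound uniformly in $\veps$, since naively applying Gronwall loses the sharper $e^{-\lambda_1(t-s)}$ decay that is essential later for the pullback absorbing set in Theorem \ref{GAPth}; the remaining estimates are routine variants of the computations already carried out in Lemma \ref{Pest}.
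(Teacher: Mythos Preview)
Your proposal is correct and follows essentially the same approach as the paper: It\^o's formula plus {\bf(H$_y^1$)} and {\bf(H$_x^5$)} yield linear differential inequalities for the moments, which are then integrated (the paper explicitly cites \eqref{PestYB} for the $Y$-estimate, just as you do). One small imprecision: your sketch for the second branch of the wedge bound---``bounding $e^{-\lambda_1(t-r)}\leq 1$''---does not by itself produce the $e^{-\lambda_1(t-s)}$ factor; the paper instead evaluates the convolution integral exactly as $e^{-\lambda_1(t-s)}(c_\veps-\lambda_1)^{-1}\bigl(1-e^{(\lambda_1-c_\veps)(t-s)}\bigr)$ and then uses $c_\veps-\lambda_1\geq 1$ for $\veps$ small, which is presumably the ``short manipulation'' you had in mind.
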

\begin{proof}
By \eqref{PestYB} and Gronwall's inequality,  one sees that there exists a $\veps_0>0$ such that for all $0<\veps\leq\veps_0$
\begin{align*}
\mbE|Y_{s,t}^\veps(\xi^\veps)|^{2p}
&
\leq\mbE|\xi^\veps|^{2p}{\rm e}^{-\frac14\veps^{-2\alpha}\eta p(t-s)}\\
&\quad
+\left(8K_1+8(p-1)\|g\|_\infty\right)^p\eta^{-p}\left(1-{\rm e}^{-\frac14\veps^{-2\alpha}\eta p(t-s)}\right).
\end{align*}

It follows from It\^o's formula, {\bf(H$_x^3$)}, {\bf(H$_x^5$)} and Young's inequality that
\begin{align*}
&
\mbE|X_{s,t}^\veps(\zeta^\veps)|^{2p}\\
&
\leq \mbE|\zeta^\veps|^{2p}+p\mbE\int_s^t|X_{s,\tau}^\veps(\zeta^\veps)|^{2p-2}
\left(-\lambda_1|X_{s,\tau}^\veps(\zeta^\veps)|^2
+K_5|Y_{s,\tau}^\veps(\xi)|^\theta+K_4\right)\d\tau\\
&\quad
+2p(p-1)\mbE\int_s^t|X_{s,\tau}^\veps(\zeta^\veps)|^{2p-2}
\left(L_\sigma^2|X_{s,\tau}^\veps(\zeta^\veps)|^2+K_7^2+2L_\sigma K_7|X_{s,\tau}^\veps(\zeta^\veps)|\right)\d\tau.
\end{align*}

If $p=1$, in view of Gronwall's inequality and \eqref{BouY2p}, one sees that there exists $\veps_0>0$ such that for all $0<\veps\leq\veps_0$
\begin{align*}
\mbE|X_{s,t}^\veps(\zeta^\veps)|^2
&
\leq \mbE|\zeta^\veps|^2{\rm e}^{-\lambda_1(t-s)}+\frac{1}{\lambda_1}\left(K_5\left(\mbE|\xi^\veps|^{\theta}+
\varpi(\theta/2)\right)+K_4\right)\left(1-{\rm e}^{-\lambda_1(t-s)}\right).
\end{align*}
On the other hand, in view of Gronwall's inequality and \eqref{BouY2p}, we obtain that there exists $\veps_0>0$ such that for all $0<\veps\leq\veps_0$
\begin{align*}
\mbE|X_{s,t}^\veps(\zeta^\veps)|^2
&
\leq \mbE|\zeta^\veps|^2{\rm e}^{-\lambda_1(t-s)}+\frac{K_5\varpi(\theta/2)+K_4}{\lambda_1}\left(1-{\rm e}^{-\lambda_1(t-s)}\right)\\
&\quad
+K_5\mbE|\xi^\veps|^{\theta}{\rm e}^{-\lambda_1(t-s)}\left(\frac{1}{8}\veps^{-2\alpha\eta\theta}-\lambda_1\right)^{-1}\left(1-{\rm exp}\{(\lambda_1-
\frac{\eta\theta}{8\veps^{2\alpha}}
(t-s)\}\right)\\
&
\leq \mbE|\zeta^\veps|^2{\rm e}^{-\lambda_1(t-s)}
+K_5\mbE|\xi^\veps|^{\theta}{\rm e}^{-\lambda_1(t-s)}
+\frac{K_5\varpi(\theta/2)+K_4}{\lambda_1}\left(1-{\rm e}^{-\lambda_1(t-s)}\right).
\end{align*}

In the case where $1<p<\frac{\lambda_1}{2L_\sigma^2}+1$, by Young's inequality we have
\begin{align*}
&
\mbE|X_{s,t}^\veps(\zeta^\veps)|^{2p}\\
&
\leq \mbE|\zeta^\veps|^{2p}+p\mbE\int_s^t\left(
-\frac12(\lambda_1-2(p-1)L_\sigma^2)|X_{s,\tau}^\veps(\zeta^\veps)|^{2p}
+C|Y_{s,\tau}^\veps(\xi^\veps)|^{\theta p}
+C\right)\d\tau,
\end{align*}
which by Gronwall's inequality and \eqref{BouY2p} implies that
\begin{align*}
\mbE|X_{s,t}^\veps(\zeta^\veps)|^{2p}
&
\leq \mbE|\zeta^\veps|^{2p}{\rm e}^{-\frac p2(\lambda_1-2(p-1)L_\sigma^2)(t-s)}+C(\mbE|\xi^\veps|^{\theta p}+1).
\end{align*}
\end{proof}

\begin{lemma}
Assume that {\bf{(H$_y^1$)}}, {\bf{(H$_y^2$)}}, {\bf{(H$_x^5$)}} and {\bf{(A$_f$)}} hold.
Then there exists a constant $K_8>0$ such that for all $x\in\R^{d_1}$
\begin{equation*}
2\langle \bar{f}(x),x\rangle+|\bar{\sigma}(x)|^2
\leq -\lambda_1|x|^2+K_8.
\end{equation*}
\end{lemma}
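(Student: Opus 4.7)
The plan is to integrate the pointwise dissipativity estimate {\bf(H$_x^5$)} first against the invariant measure $\mu^x$ of the frozen equation and then against time, and finally to pass to the limit using the averaging assumptions {\bf(A$_f$)} and {\bf(A$_\sigma$)}.

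More concretely, I would first fix $(t,x) \in \R^{1+d_1}$ and integrate both sides of {\bf(H$_x^5$)} in $y$ against $\mu^x(\d y)$. Since $\sigma(t,x)$ does not depend on $y$, this yields
\begin{equation*}
2\langle \hat{f}(t,x), x\rangle + |\sigma(t,x)|_{HS}^2 \le -\lambda_1 |x|^2 + K_5 \int_{\R^{d_2}} |y|^{\theta}\, \mu^x(\d y) + K_4.
\end{equation*}
By Corollary~\ref{EIMeq} (whose hypotheses {\bf(H$_y^1$)}, {\bf(H$_y^3$)} and $g\in C_b$ are in force as part of the standing assumptions), $\sup_{x\in\R^{d_1}}\int_{\R^{d_2}}|y|^\theta \mu^x(\d y) \le M$, so we obtain
\begin{equation*}
2\langle \hat{f}(t,x), x\rangle + |\sigma(t,x)|_{HS}^2 \le -\lambda_1|x|^2 + K_5 M + K_4 =: -\lambda_1|x|^2 + K_8,
\end{equation*}
uniformly in $t\in\R$.

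Next I would average this inequality over $s \in [0,T]$ and let $T\to\infty$. For the drift term, by {\bf(A$_f$)} and Remark~\ref{Rem01}(iii),
\begin{equation*}
\frac{1}{T}\int_0^T \langle \hat{f}(s,x), x\rangle\, \d s \longrightarrow \langle \bar{f}(x), x\rangle \quad \text{as } T\to\infty.
\end{equation*}
For the diffusion term I would use the decomposition
\begin{equation*}
\frac{1}{T}\int_0^T |\sigma(s,x)|_{HS}^2\, \d s = \frac{1}{T}\int_0^T |\sigma(s,x) - \bar{\sigma}(x)|_{HS}^2\, \d s + \frac{2}{T}\int_0^T \langle \sigma(s,x) - \bar{\sigma}(x), \bar{\sigma}(x)\rangle_{HS}\, \d s + |\bar{\sigma}(x)|_{HS}^2.
\end{equation*}
By {\bf(A$_\sigma$)} the first term is bounded by $\omega^\sigma(T)(1+|x|^2) \to 0$, and by the Cauchy--Schwarz inequality applied to $\tfrac{1}{T}\int_0^T (\sigma(s,x)-\bar{\sigma}(x))\,\d s$ together with {\bf(A$_\sigma$)}, the second term also vanishes as $T\to\infty$. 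Consequently,
\begin{equation*}
\frac{1}{T}\int_0^T |\sigma(s,x)|_{HS}^2\, \d s \longrightarrow |\bar{\sigma}(x)|_{HS}^2.
\end{equation*}
Combining these two limits with the uniform upper bound $-\lambda_1|x|^2 + K_8$ obtained above yields the claim.

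There is no real obstacle here; the only subtlety is handling the square $|\sigma|^2$ with the $L^2$-type averaging hypothesis {\bf(A$_\sigma$)} (as opposed to the $L^1$-type hypothesis {\bf(A$_f$)}), which is why the decomposition above is needed. The resulting constant can be taken as $K_8 = K_5 M + K_4$, independent of $t$ and $x$.
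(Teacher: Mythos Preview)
Your proof is correct and follows essentially the same strategy as the paper: integrate {\bf(H$_x^5$)} against $\mu^x$ using the uniform moment bound $\sup_x\int|y|^\theta\mu^x(\d y)<\infty$, then time-average and pass to the limit via {\bf(A$_f$)} and {\bf(A$_\sigma$)}, handling the quadratic term $|\sigma|_{HS}^2$ by the same add-and-subtract decomposition around $\bar\sigma(x)$. The paper organizes the computation by starting from $2\langle\bar f(x),x\rangle+|\bar\sigma(x)|_{HS}^2$ and inserting the finite-$T$ approximations, whereas you start from the pointwise estimate and take the limit, but the content is identical.
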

\begin{proof}
Let
$
K':=\sup_{x\in\R^{d_1}}\int_{\R^{d_2}}|y|^{\theta}\mu^x(\d y)<\infty.
$
It follows from {\bf(H$_x^3$)}, {\bf(H$_x^5$)}, H\"older's inequality and Young's inequality that for any $x\in\R^{d_1}$
\begin{align*}
&
2\langle \bar{f}(x),x\rangle+|\bar{\sigma}(x)|^2_{HS}\\\nonumber
&
=2\left\langle \bar{f}(x)-\frac{1}{T}\int_0^T\hat{f}(s,x)\d s,x\right\rangle
+2\left\langle \frac{1}{T}\int_0^T\hat{f}(s,x)\d s,x\right\rangle
+\left|\frac{1}{T}\int_0^T\sigma(s,x)\d s\right|^2\\\nonumber
&\quad
+\left|\bar{\sigma}(x)-\frac{1}{T}\int_0^T\sigma(s,x)\d s\right|^2
+2\left\langle\bar{\sigma}(x)-\frac{1}{T}\int_0^T\sigma(s,x)\d s,\frac{1}{T}\int_0^T\sigma(s,x)\d s\right\rangle\\\nonumber
&
\leq \frac{2}{T}\left|\int_0^T\left(\bar{f}(x)-\hat{f}(s,x)\right)\d s\right|\cdot|x|
+\frac{1}{T}\int_0^T\int_{\R^{d_2}}\left(2\left\langle f(s,x,y),x\right\rangle+|\sigma(s,x)|^2\right)\mu^x(\d y)\d s\\\nonumber
&\quad
+\frac{1}{T}\int_0^T|\bar{\sigma}(x)-\sigma(s,x)|^2\d s
+2\left|\bar{\sigma}(x)-\frac{1}{T}\int_0^T\sigma(s,x)\d s\right|\left|\frac{1}{T}\int_0^T\sigma(s,x)\d s\right|\\\nonumber
&
\leq -\lambda_1|x|^2+K_4+K'K_5+1
+\frac{2}{T}\left|\int_0^T\left(\bar{f}(x)-\hat{f}(s,x)\right)\d s\right|\cdot|x|\\\nonumber
&\quad
+\frac{1}{T}\int_0^T|\bar{\sigma}(x)-\sigma(s,x)|^2\d s
+\frac{1}{T}\int_0^T|\bar{\sigma}(x)-\sigma(s,x)|^2\d s
\left(2L_\sigma^2|x|^2+2K_7^2\right),
\end{align*}
which by {\bf{(A$_f$)}} and {\bf{(A$_\sigma$)}} implies that
\[
2\langle \bar{f}(x),x\rangle+|\bar{\sigma}(x)|^2_{HS}
\leq-\lambda_1|x|^2+K_4+K'K_5+1
\]
by letting $T\rightarrow\infty$.
\end{proof}

Similarly to the estimates provided in Lemma \ref{BouXY}, the following lemma can be derived by applying the same methodology. Therefore, we omit the proof.
\begin{lemma}\label{BouAX}
Assume that {\bf(H$_y^1$)}, {\bf(H$_y^3$)} and {\bf(H$_x^2$)}--{\bf(H$_x^5$)} hold.
If $g\in C_b(\R^{d_1+d_2})$, then for any $\zeta\in\mcL^2(\Omega,\msF_s,\mbP;\R^{d_1})$,
there exists a unique solution $\bar{X}_{s,t}(\zeta)$ to \eqref{avemain}.
Furthermore, there exists $\veps_0>0$ such that for all $0<\veps\leq\veps_0$
\begin{equation*}\label{BouAX1}
\mbE|\bar{X}_{s,t}(\zeta)|^2
\leq \mbE|\zeta|^2{\rm e}^{-\lambda_1(t-s)}
+\frac{K_8}{\lambda_1}\left(1-{\rm e}^{-\lambda_1(t-s)}\right).
\end{equation*}

Moreover, if $\zeta\in\mcL^{2p}(\Omega,\msF_s,\mbP;\R^{d_1})$
for any $1<p<\frac{\lambda_1} {2L_\sigma^2}+1$, then there exists a constant $C>0$ such that
\begin{align*}\label{BouAX2}
\mbE|\bar{X}_{s,t}(\zeta)|^{2p}
&
\leq \mbE|\zeta|^{2p}{\rm e}^{-\frac p2(\lambda_1-2(p-1)L_\sigma^2)(t-s)}+C.
\end{align*}
\end{lemma}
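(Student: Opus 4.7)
The plan is to mirror the strategy used in Lemma \ref{BouXY}, applied now to the averaged equation \eqref{avemain}, which is considerably simpler because there is no fast variable and no $\veps$-dependence to carry. The four ingredients I will assemble are: (i) well-posedness via the monotone SDE framework; (ii) the dissipative estimate $2\langle \bar{f}(x),x\rangle+|\bar{\sigma}(x)|_{HS}^{2}\leq -\lambda_1|x|^2+K_8$ proved in the lemma immediately preceding the statement; (iii) It\^o's formula applied to $|\bar{X}_{s,t}(\zeta)|^{2p}$; (iv) Gronwall's inequality.

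For well-posedness I would invoke Remark \ref{EUASDE}: by Lemma \ref{aveCCL}, $\bar f$ is locally Lipschitz with polynomial growth and coercive, and $\bar\sigma$ satisfies {\bf(H$_x^3$)}. Together with the dissipative inequality from the preceding lemma, this places the averaged equation in the framework of \cite{LR2015}, Theorem 3.1.1, yielding existence and uniqueness of $\bar{X}_{s,t}(\zeta)$ for any $\zeta\in\mcL^2(\Omega,\msF_s,\mbP;\R^{d_1})$.

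For the $L^2$ bound, I apply It\^o's formula to $t\mapsto|\bar{X}_{s,t}(\zeta)|^{2}$, use the dissipativity inequality, take expectation (after the usual localization by $\tau_n:=\inf\{t\geq s: |\bar X_{s,t}(\zeta)|>n\}\wedge T$ to kill the stochastic integral and pass to the limit via Fatou), and arrive at
\[
\frac{\d}{\d t}\mbE|\bar{X}_{s,t}(\zeta)|^2 \leq -\lambda_1\,\mbE|\bar{X}_{s,t}(\zeta)|^2+K_8,
\]
after which Gronwall immediately produces the stated bound. For the higher-order bound, I apply It\^o to $|\bar{X}_{s,t}(\zeta)|^{2p}$; the drift piece is
\[
p|\bar{X}_{s,t}(\zeta)|^{2p-2}\bigl(2\langle\bar f,\bar X_{s,t}(\zeta)\rangle+|\bar\sigma|_{HS}^{2}\bigr)+2p(p-1)|\bar{X}_{s,t}(\zeta)|^{2p-4}|\bar\sigma^{T}\bar{X}_{s,t}(\zeta)|^{2}.
\]
Invoking the dissipative bound for the first factor and estimating $|\bar\sigma^{T}x|^{2}\leq|\bar\sigma|_{HS}^{2}|x|^{2}\leq(2L_\sigma^{2}|x|^{2}+2K_7^{2})|x|^{2}$ from {\bf(H$_x^3$)} for the second, then absorbing the lower-order terms $|\bar X_{s,t}(\zeta)|^{2p-1}$ and $|\bar X_{s,t}(\zeta)|^{2p-2}$ via Young's inequality into a small fraction of $|\bar X_{s,t}(\zeta)|^{2p}$, yields
\[
\frac{\d}{\d t}\mbE|\bar{X}_{s,t}(\zeta)|^{2p}\leq -\tfrac{p}{2}\bigl(\lambda_1-2(p-1)L_\sigma^{2}\bigr)\mbE|\bar{X}_{s,t}(\zeta)|^{2p}+C.
\]
The sign of the coefficient is precisely what the constraint $1<p<\frac{\lambda_1}{2L_\sigma^{2}}+1$ ensures, and Gronwall finishes the proof.

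The only mildly delicate point, just as in Lemma \ref{BouXY}, is the rigorous justification that the local martingale $\int_s^{\cdot}\langle\cdots,\bar\sigma\,\d W^{1}\rangle$ has vanishing expectation; this is handled by the standard stopping-time localization outlined above, whose limit passage relies on the moment bounds already built into the estimate itself. No genuinely new obstacle arises relative to the proof of Lemma \ref{BouXY}, which is why the authors omit it.
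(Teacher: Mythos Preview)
Your proposal is correct and matches the paper's intended argument: the authors explicitly omit the proof, stating that it follows by applying the same methodology as Lemma \ref{BouXY}, and the commented-out proof in the source does exactly what you describe (It\^o's formula for $|\bar X_{s,t}(\zeta)|^{2p}$, the dissipativity bound $2\langle\bar f(x),x\rangle+|\bar\sigma(x)|_{HS}^2\leq -\lambda_1|x|^2+K_8$ from the preceding lemma, the estimate $|\bar\sigma(x)|_{HS}^2\leq L_\sigma^2|x|^2+K_7^2+2L_\sigma K_7|x|$, Young's inequality, and Gronwall). Your remark that the $\veps_0$ clause is vacuous here, since the averaged equation carries no $\veps$-dependence, is also accurate.
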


\begin{remark}\rm
Assume that $f$ and $\sigma$ satisfy {\bf (A$_f$)} and {\bf (A$_\sigma$)}.
Note that by \cite[Lemma 5.10]{CL2023} one sees that for any
$\widetilde{\mbF}:=\left(\widetilde{F},\widetilde{\sigma}\right)\in\mcH(\mbF)$,
$\widetilde{F}$ and $\widetilde{\sigma}$ satisfy {\bf (A$_f$)} and {\bf (A$_\sigma$)} provided $\mcH(\mbF)$ is compact.
\end{remark}

Following a similar approach as in the proof of \cite[Theorem 3.1]{CL2021},
we establish the following lemma for the continuous dependence of solutions to \eqref{eqsys}
on initial values and coefficients. For convenience,
we assume $\veps=1$ in this lemma without loss of generality.
\begin{lemma}\label{SICCD}
Assume that $\xi^n,\xi\in\mcL^{2}(\Omega,\msF_s,\mbP;\R^{d_1+d_2})$.
For any $n\in\N$
let $V_{s,t}^n(\xi^n)$ satisfy
\begin{equation*}
V_{s,t}^n(\xi^n)=\xi^n+\int_s^tF^n(r,V_{s,r}^n(\xi^n))\d r+\int_s^tG^n(r,V_{s,r}^n(\xi^n))\d W_r, \quad t\geq s,
\end{equation*}
and $V_{s,t}(\xi)$ satisfy
\begin{equation*}
 V_{s,t}(\xi)=\xi+\int_s^tF(r,V_{s,r}(\xi))\d r+\int_s^tG(r,V_{s,r}(\xi))\d W_r, \quad t\geq s.
\end{equation*}
Suppose that $F^n$, $G^n$, $F$ and $G$ satisfy
{\bf(H$_y^1$)}, {\bf(H$_y^3$)} and {\bf(H$_x^2$)}--{\bf(H$_x^5$)}.
Furthermore, assume that
$\lim_{n\rightarrow\infty}F^n(t,v)=F(t,v)$
and
$\lim_{n\rightarrow\infty}G^n(t,v)=G(t,v)$
for all $t\in\R$ and $v\in\R^{d_1+d_2}$.
If $\lim_{n\rightarrow\infty}d_{BL}\left(\msL(\xi^n),\msL(\xi)\right)=0$, then
for any $t\geq s$
\[
\lim_{n\rightarrow\infty}\sup_{r\in[s,t]}d_{BL}\left(\msL\left(V_{s,r}^n(\xi^n)\right),\msL\left(V_{s,r}(\xi)\right)\right)=0.
\]
\end{lemma}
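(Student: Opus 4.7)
The plan is to reduce the weak-convergence statement to convergence in probability via a Skorokhod coupling, and then prove the latter by a stopping-time-localized It\^o/Gronwall estimate. First, I would invoke Skorokhod's representation theorem on the convergent sequence of laws $\msL(\xi^n)$ to obtain versions $\tilde{\xi}^n,\tilde{\xi}$ on an auxiliary probability space $(\widetilde{\Omega},\widetilde{\msF},\widetilde{\mbP})$ with $\msL(\tilde{\xi}^n)=\msL(\xi^n)$, $\msL(\tilde{\xi})=\msL(\xi)$ and $\tilde{\xi}^n\to\tilde{\xi}$ almost surely. Enlarging the space if necessary, I add a standard Brownian motion $\widetilde{W}$ independent of $(\tilde{\xi}^n,\tilde{\xi})$, and define $\widetilde{V}^n_{s,r}$ (resp.\ $\widetilde{V}_{s,r}$) as the strong solution of the SDE with coefficients $(F^n,G^n)$ (resp.\ $(F,G)$), initial datum $\tilde{\xi}^n$ (resp.\ $\tilde{\xi}$), driven by $\widetilde{W}$; the existence and uniqueness provided by Lemma \ref{BouXY} give $\msL(\widetilde{V}^n_{s,r})=\msL(V^n_{s,r}(\xi^n))$ and $\msL(\widetilde{V}_{s,r})=\msL(V_{s,r}(\xi))$. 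It therefore suffices to show $\sup_{r\in[s,t]}\widetilde{\mbE}[|\widetilde{V}^n_{s,r}-\widetilde{V}_{s,r}|\wedge 2]\to 0$, since every $\varphi$ with $\|\varphi\|_{BL}\leq 1$ satisfies $|\varphi(u)-\varphi(v)|\leq|u-v|\wedge 2$.

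Second, I would localize via the stopping time $\tau_R:=\inf\{r\geq s:|\widetilde{V}^n_{s,r}|\vee|\widetilde{V}_{s,r}|>R\}\wedge t$ and apply It\^o's formula to $|\widetilde{V}^n_{s,r\wedge\tau_R}-\widetilde{V}_{s,r\wedge\tau_R}|^2$. Decomposing
\[
F^n(r,\widetilde{V}^n)-F(r,\widetilde{V})=\bigl[F^n(r,\widetilde{V}^n)-F^n(r,\widetilde{V})\bigr]+\bigl[F^n(r,\widetilde{V})-F(r,\widetilde{V})\bigr]
\]
(and symmetrically for $G^n,G$), the first piece is bounded by $C_R|\widetilde{V}^n-\widetilde{V}|^2$ on $\{r\leq\tau_R\}$ via the local Lipschitz estimate stemming from {\bf(H$_x^2$)}, {\bf(H$_y^3$)}(iii) and {\bf(H$_y^5$)}, whose constants I take to be uniform in $n$. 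For the second piece, the polynomial bounds on the derivatives of $F^n$ and $G^n$ render $\{F^n\},\{G^n\}$ equi-continuous on each compact set; combined with the hypothesized pointwise convergence this forces the stronger uniform convergence on compacts, so that
\[
\delta_n^R:=\sup_{u\in[s,t],\,|v|\leq R}\bigl(|F^n(u,v)-F(u,v)|^2+|G^n(u,v)-G(u,v)|_{HS}^2\bigr)\longrightarrow 0.
\]

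Third, for any $\eta>0$ set $B_\eta^n:=\{|\tilde{\xi}^n-\tilde{\xi}|\leq\eta\}$; since $\chi_{B_\eta^n}$ is $\widetilde{\msF}_s$-measurable the stochastic-integral part of It\^o's formula remains a martingale after multiplication by $\chi_{B_\eta^n}$, and Gronwall's inequality yields
\[
\widetilde{\mbE}\bigl[|\widetilde{V}^n_{s,r\wedge\tau_R}-\widetilde{V}_{s,r\wedge\tau_R}|^2\chi_{B_\eta^n}\bigr]\leq e^{C_R(t-s)}\bigl(\eta^2+(t-s)\delta_n^R\bigr).
\]
Combining this with the Chebyshev bound $\widetilde{\mbP}(\tau_R\leq t)\leq CR^{-2}$, supplied uniformly in $n$ by the moment estimates of Lemma \ref{BouXY}, and with $\widetilde{\mbP}(|\tilde{\xi}^n-\tilde{\xi}|>\eta)\to 0$, the three-set decomposition
\[
\widetilde{\mbP}\bigl(|\widetilde{V}^n_{s,r}-\widetilde{V}_{s,r}|>\varepsilon\bigr)\leq \frac{e^{C_R(t-s)}\bigl(\eta^2+(t-s)\delta_n^R\bigr)}{\varepsilon^2}+\widetilde{\mbP}(\tau_R\leq t)+\widetilde{\mbP}(|\tilde{\xi}^n-\tilde{\xi}|>\eta)
\]
gives $\sup_{r\in[s,t]}\widetilde{\mbP}(|\widetilde{V}^n_{s,r}-\widetilde{V}_{s,r}|>\varepsilon)\to 0$ upon sending $n\to\infty$, then $\eta\to 0$, then $R\to\infty$. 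Bounded convergence then delivers the required uniform $d_{BL}$-convergence.

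The main obstacle is that the Skorokhod coupling only yields $\tilde{\xi}^n\to\tilde{\xi}$ in probability and not in $L^2$, so the naive $L^2$-Gronwall inequality for $\widetilde{\mbE}|\widetilde{V}^n_{s,r}-\widetilde{V}_{s,r}|^2$ is unavailable; the restriction to $B_\eta^n$ is the device that circumvents this without requiring uniform integrability of $|\tilde{\xi}^n-\tilde{\xi}|^2$. Simultaneously, the polynomial growth of $(F^n,G^n)$ compels the localization by $\tau_R$, and the three parameters $n,\eta,R$ must be sent to their limits in the correct order: the uniform-in-$n$ tail control $\widetilde{\mbP}(\tau_R\leq t)\leq CR^{-2}$ from Lemma \ref{BouXY} must be exploited before $\eta\to 0$, after which $R$ may be sent to infinity.
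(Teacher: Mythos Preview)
The paper does not supply its own proof of this lemma; it simply states that the argument follows \cite[Theorem~3.1]{CL2021} and omits all details. Your Skorokhod-coupling plus stopping-time-localized It\^o/Gronwall strategy is the standard route for such continuous-dependence results and is precisely in the spirit of the cited reference, so there is nothing substantive to compare.

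One minor gap: your uniform-in-$n$ tail estimate $\widetilde{\mbP}(\tau_R\leq t)\leq CR^{-2}$ relies, via Lemma~\ref{BouXY}, on $\sup_n\widetilde{\mbE}|\tilde\xi^n|^2<\infty$ (and, for the $y$-component, $\sup_n\widetilde{\mbE}|\pi_2\tilde\xi^n|^{\theta}<\infty$). The hypothesis $d_{BL}(\msL(\xi^n),\msL(\xi))\to 0$ is merely weak convergence and does not furnish such uniform moments; the lemma as stated only asserts $\xi^n\in\mcL^2$ individually. In the paper's actual applications (Lemma~\ref{SPFth} and Theorem~\ref{GAPth}) all measures lie in the absorbing set $B\subset\msP_{2,\theta}$ of \eqref{PGAeq0}, so the required uniformity is automatic; but your write-up should either add this as an explicit hypothesis or insert a further truncation of the initial data before invoking Lemma~\ref{BouXY}. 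The same remark applies to your assumption that the constants in {\bf(H$_x^2$)}, {\bf(H$_y^3$)}, {\bf(H$_y^5$)} are uniform in $n$: this is true in the hull setting by Remark~\ref{Rem0406}, but is not part of the lemma's stated hypotheses.
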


\begin{lemma}\label{SPFth}
Suppose that {\bf(H$_y^1$)}, {\bf(H$_y^3$)} and {\bf(H$_x^2$)}--{\bf(H$_x^5$)} hold. If $g\in C_b(\R^{d_1+d_2})$, then
for any $0<\veps\leq1$,
$(\sigma,P^*_\veps)$ is a skew product flow on the phase space $(\msP_{2,\theta}(\R^{d_1+d_2}),d_{BL})$
with base space $\mcH(\mbF_\veps)$, where $\sigma:\R_+\times\mcH(\mbF_\veps)\rightarrow\mcH(\mbF_\veps)$ is defined by
$\sigma_t(\widetilde{\mbF}_\veps):=\widetilde{\mbF}_\veps(t+\cdot,\cdot)$ for all
$(t,\widetilde{\mbF}_\veps)\in\R_+\times\mcH(\widetilde{\mbF}_\veps)$.
\end{lemma}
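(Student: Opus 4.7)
The plan is to verify the four ingredients of a skew product flow: that $\sigma$ is a dynamical system on the base, that $P^*_\veps$ satisfies the cocycle identity, that $P^*_\veps$ preserves the phase space $\msP_{2,\theta}(\R^{d_1+d_2})$, and that the map $(t,\widetilde{\mbF}_\veps,m)\mapsto P^*_\veps(t,\widetilde{\mbF}_\veps,m)$ is continuous.

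First, Remark \ref{Rem0406} already asserts that $(\mcH(\mbF_\veps),\R,\sigma)$ is a Bebutov shift flow, so the base-space requirement is immediate. Second, for the cocycle identity, I would start from the key algebraic fact $V_{\sigma_s\widetilde{\mbF}_\veps}(t,0,v)=V_{\widetilde{\mbF}_\veps}(t+s,s,v)$, which follows from uniqueness of strong solutions (Remark \ref{Rem0406} ensures the shifted coefficients satisfy the same hypotheses) together with the observation that both sides solve the same SDE with the same initial condition at $t=0$. Combining this with the Markov property of the SDE \eqref{eqsys}—$V_{\widetilde{\mbF}_\veps}(t+s,0,v)=V_{\widetilde{\mbF}_\veps}(t+s,s,V_{\widetilde{\mbF}_\veps}(s,0,v))$ in law—yields
\[
P^*_\veps(t+s,\widetilde{\mbF}_\veps,m)=P^*_\veps(t,\sigma_s\widetilde{\mbF}_\veps,P^*_\veps(s,\widetilde{\mbF}_\veps,m)),
\]
while $P^*_\veps(0,\widetilde{\mbF}_\veps,m)=m$ follows directly from $V_{\widetilde{\mbF}_\veps}(0,0,v)=v$.

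Third, to check invariance of $\msP_{2,\theta}(\R^{d_1+d_2})$, I would apply Lemma \ref{BouXY} (which remains valid for every $\widetilde{\mbF}_\veps\in\mcH(\mbF_\veps)$ by Remark \ref{Rem0406}): for any $m\in\msP_{2,\theta}(\R^{d_1+d_2})$, the moment bounds \eqref{BouY2p} and the accompanying estimate for $|X|^2$ yield
\[
\int_{\R^{d_1+d_2}}|v|^2\,P^*_\veps(t,\widetilde{\mbF}_\veps,m)(\d v)<\infty,
\quad
\int_{\R^{d_2}}|y|^{\theta}\,P^*_\veps(t,\widetilde{\mbF}_\veps,m)\circ\pi_2^{-1}(\d y)<\infty,
\]
so $P^*_\veps(t,\widetilde{\mbF}_\veps,m)\in\msP_{2,\theta}(\R^{d_1+d_2})$.

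Finally, for joint continuity of $(t,\widetilde{\mbF}_\veps,m)\mapsto P^*_\veps(t,\widetilde{\mbF}_\veps,m)$ on $\R_+\times\mcH(\mbF_\veps)\times\msP_{2,\theta}(\R^{d_1+d_2})$, I would take sequences $t_n\to t$, $\widetilde{\mbF}_\veps^n\to\widetilde{\mbF}_\veps$ in $\mcH(\mbF_\veps)$, and $m_n\to m$ in $d_{BL}$, then choose couplings $\xi_n,\xi$ on a common probability space with $\msL(\xi_n)=m_n$, $\msL(\xi)=m$, and $d_{BL}(\msL(\xi_n),\msL(\xi))\to 0$. Applying Lemma \ref{SICCD} to the solutions $V_{\widetilde{\mbF}_\veps^n}(0,\cdot,\xi_n)$ and $V_{\widetilde{\mbF}_\veps}(0,\cdot,\xi)$ gives
\[
\lim_{n\to\infty}\sup_{r\in[0,T]}d_{BL}\bigl(\msL(V_{\widetilde{\mbF}_\veps^n}(0,r,\xi_n)),\msL(V_{\widetilde{\mbF}_\veps}(0,r,\xi))\bigr)=0,
\]
which combined with time-continuity (a straightforward consequence of the It\^o estimates already used in Lemma \ref{timecon}) yields $P^*_\veps(t_n,\widetilde{\mbF}_\veps^n,m_n)\to P^*_\veps(t,\widetilde{\mbF}_\veps,m)$ in $d_{BL}$. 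The main technical obstacle is ensuring that Lemma \ref{SICCD} remains applicable on the shrinking/expanding parameter set—i.e.\ that the hypotheses on $F^n,G^n$ hold uniformly along the hull sequence—but this is guaranteed by the uniform bounds in Remark \ref{Rem0406}, so the argument closes cleanly.
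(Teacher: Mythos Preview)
Your proposal is correct and follows essentially the same route as the paper: Remark~\ref{Rem0406} plus Lemma~\ref{BouXY} for phase-space invariance, and Lemma~\ref{SICCD} for continuity (and, implicitly, uniqueness in law underpinning the cocycle identity). The paper's proof is extremely terse—it merely cites these ingredients—whereas you have spelled out how each piece of Definition~\ref{DefNDY} is verified, including the cocycle identity via the flow relation $V_{\sigma_s\widetilde{\mbF}_\veps}(t,0,v)=V_{\widetilde{\mbF}_\veps}(t+s,s,v)$; your version is a faithful expansion of the same argument.
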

\begin{proof}
By Remark \ref{Rem0406} and Lemma \ref{BouXY}, one sees that
$$P^*_\veps(t,\widetilde{\mbF}_\veps,\cdot):\msP_{2,\theta}(\R^{d_1+d_2})
\rightarrow\msP_{2,\theta}(\R^{d_1+d_2})$$
for any $t\geq0$, $\widetilde{\mbF}_\veps\in\mcH(\mbF_\veps)$ and $0<\veps\leq\veps_0$.
Employing Lemma \ref{SICCD}, the uniqueness in law of the solutions to \eqref{eqsys} follows,
which completes the proof.
\end{proof}

With the help of the aforementioned results, we are now in a position to prove Theorem \ref{GAPth}.
\begin{proof}[Proof of Theorem \ref{GAPth}]
(i)
For any $R>0$ and $r\geq1$, define
$$\mcD_R^{1,r}:=\left\{\mu\in \msP_{r}(\R^{d_1}):\int_{\R^{d_1}}|x|^{r}\mu(\d x)\leq R\right\},$$
$$\mcD_R^{2,r}:=\left\{\mu\in\msP_{r}(\R^{d_2}):\int_{\R^{d_2}}|y|^{r}\mu(\d y)\leq R\right\}.$$
Set
$
\varpi_1:=\lambda_1^{-1}\left(K_5\left(2\varpi(\theta/2)+1\right)+K_4\right)+1
$
and
$\varpi_2:=\varpi(\theta/2)+1.
$
Let $\varpi:=\varpi_1+\varpi_2$. Define
\begin{equation}\label{PGAeq0}
B:=\overline{\left\{m\in \msP_{2,\theta}(\R^{d_1+d_2}):
\int_{\R^{d_1+d_2}}|z|^2 m(\d z)\leq\varpi,
m\circ\pi_1^{-1}\in\mcD_{\varpi_1}^{1,2},
m\circ\pi_2^{-1}\in\mcD_{\varpi_2}^{2,\theta}\right\}}.
\end{equation}
It can be verified that $B$ is compact.
By Lemma \ref{BouXY}, one sees that $B\subset\msP_{2,\theta}(\R^{d_1+d_2})$
is a pullback absorbing set such that
$
P^*_\veps(t,\widetilde{\mbF}_\veps,B)\subset B
$
for all $t\geq0$ and $\widetilde{\mbF}_\veps\in\mcH(\mbF_\veps)$.
It follows from Lemma \ref{pullALem} that $(\sigma,P^*_\veps)$ has a
pullback attractor $\msA^\veps$ with component subsets
$$\msA_{\widetilde{\mbF}_\veps}:=\bigcap_{\tau\geq0}
\overline{\bigcup_{t\geq\tau}P^*_\veps(t,\sigma_{-t}(\widetilde{\mbF}_\veps),B)}\subset B,\quad
\widetilde{\mbF}_\veps\in\mcH(\mbF_\veps).$$

(ii)
Let
$$
B_1:=\overline{\left\{\mu\in\msP_2(\R^{d_1}):\int_{\R^{d_1}}|x|^2\mu(\d x)\leq K_8\lambda_1^{-1}+1\right\}}.
$$
Employing Lemma \ref{BouAX}, we show that $B_1$ is a positively invariant and absorbing set. Thanks to Lemma \ref{globalALem}, $\bar{P}^*$ admits a global attractor $\bar{\mcA}$, defined by
\[
\bar{\mcA}:=\bigcap_{t\geq0}\bar{P}^*(t,B_1).
\]

(iii)
For any $\delta>0$ and bounded subset $D\subset \msP_2(\R^{d_1})$,
since $\bar{\mcA}$ is the attractor of $\bar{P}^*$, there exists a $T>0$ such that
for all $t\geq T$
\begin{equation}\label{PGAeq1}
\bar{P}^*(t,D)\subset\mcO_{\delta/2}\left(\bar{\mcA}\right).
\end{equation}
In view of \eqref{FAPord}, we have
\begin{equation}\label{PGAeq2}
\sup_{0\leq t\leq 2T}
d(\Pi_1 P^*_\veps(t,\widetilde{\mbF}_\veps,m),\bar{P}^*(t,m\circ\pi_1^{-1}))<\eta(T,B)(\veps)
\end{equation}
for all $m\in B$ and $\widetilde{\mbF}_\veps\in\mcH(\mbF_\veps)$.
And there exists $\veps_0>0$ such that
$\eta(T,D)(\veps)<\delta/2$ for all $0<\veps\leq\veps_0$.

It follows from \eqref{PGAeq1} and \eqref{PGAeq2} that
\begin{equation*}
\bigcup_{\widetilde{\mbF}_\veps\in\mcH(\mbF_\veps)}
\Pi_1P^*_\veps(t,\widetilde{\mbF}_\veps,B)
\subset\mcO_{\delta}\left(\bar{\mcA}\right)
\end{equation*}
for all $T\leq t\leq2T$.
Taking some $t_0\in[T,2T]$, then we have
\begin{equation*}
\bigcup_{\widetilde{\mbF}_\veps\in\mcH(\mbF_\veps)}P^*_\veps(t_0,\sigma_{-t_0}(\widetilde{\mbF}_\veps),B)
\subset\mcO_\delta(\bar{\mcA}).
\end{equation*}

It follows from the $P^*_\veps$-invariance that
$P^*_\veps(t_0,\sigma_{-t_0}(\widetilde{\mbF}_\veps),\msA_{\sigma_{-t_{0}}(\widetilde{\mbF}_\veps)})
=\msA_{\widetilde{\mbF}_\veps},
$
which implies that for all $0<\veps\leq\veps_0$ and $\widetilde{\mbF}_\veps\in\mcH(\mbF_\veps)$
\begin{equation*}
\msA_{\widetilde{\mbF}_\veps}\subset \mcO_\delta(\bar{\mcA}),
\end{equation*}
because $\msA_{\sigma_{-t_0}(\widetilde{\mbF}_\veps)}\subset B$.
\end{proof}

\subsection{Proof of Theorem \ref{SAPth}}\label{PSAPth}
In this subsection, we will give the proof of the second Bogolyubov theorem.
To this end, we shall show the uniqueness and existence of bounded solutions to \eqref{sfmain}.
We say that the solution $V_t^\veps:=(X_t^\veps,Y_t^\veps),t\in\R$, of \eqref{sfmain} is
$\mcL^{2p}\left(\Omega,\mbP;\R^{d_1+d_2}\right)$-{\em bounded} if
$
\sup_{t\in\R}\mbE|V_t^\veps|^{2p}<\infty.
$

\begin{proposition}\label{BouSth}
Assume that $B(x,y)\equiv B(y)$ for all $(x,y)\in\R^{d_1+d_2}$ and $\beta\leq\alpha$.
Furthermore, suppose that {\bf(H$_y^1$)}, {\bf(H$_y^3$)}, {\bf(H$_y^6$)}
and {\bf(H$_x^2$)}--{\bf(H$_x^6$)}hold.
If $\beta=\alpha$ and $\lambda_1>\frac{L_b^2}{\eta}$,
then there exists $\veps_0>0$ such that for all $0<\veps\leq\veps_0$
\begin{equation}\label{ASSS1}
\mbE|V_{s,t}^{\veps}(\xi_1)-V_{s,t}^{\veps}(\xi_2)|^2
\leq\mbE|\xi_1-\xi_2|^2{\rm exp}\left(-\left(\frac{\lambda_1}{2}-\frac{L_b^2}{2\eta}\right)(t-s)\right).
\end{equation}
If $\beta<\alpha$ then there exists $\veps_0>0$ such that for all $0<\veps\leq\veps_0$
\begin{equation}\label{ASSS2}
\mbE|V_{s,t}^{\veps}(\xi_1)-V_{s,t}^{\veps}(\xi_2)|^2
\leq\mbE|\xi_1-\xi_2|^2{\rm exp}\left(-\frac{\lambda_1}{2}(t-s)\right).
\end{equation}

Moreover, for any $0<\var\leq\var_{0}$ there exists a unique solution
$V_t^\veps,t\in\R$, to \eqref{sfmain} such that
\begin{equation*}\label{pEBou}
\sup_{t\in\R}\mbE|V_t^{\veps}|^{2p}<\infty,
\end{equation*}
where $1\leq p<\frac{\lambda_1}{2L_\sigma^2}+1$.
\end{proposition}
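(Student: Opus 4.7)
The proof naturally splits into two parts: (a) the two exponential contraction estimates \eqref{ASSS1} and \eqref{ASSS2}, and (b) the existence, uniqueness and moment bound for the two-sided bounded solution, which will follow from (a) by a Cauchy argument in the pullback variable.

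For (a), I would apply It\^o's formula to $|V_{s,t}^\veps(\xi_1)-V_{s,t}^\veps(\xi_2)|^2=|X_{s,t}^\veps(\xi_1^x)-X_{s,t}^\veps(\xi_2^x)|^2+|Y_{s,t}^\veps(\xi_1^y)-Y_{s,t}^\veps(\xi_2^y)|^2$. The slow part is bounded via {\bf (H$_x^6$)} by $-\lambda_1|X_1-X_2|^2+\lambda_2|Y_1-Y_2|^2$, and the $B,g$-contribution to the fast part is bounded via {\bf (H$_y^3$)}(i) (using that $B,g$ are $x$-independent here) by $-\veps^{-2\alpha}\eta|Y_1-Y_2|^2$. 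The only nontrivial piece is the $b$-coupling:
\[
2\veps^{-\beta}\langle Y_1-Y_2, b(X_1,Y_1)-b(X_2,Y_2)\rangle\leq 2\veps^{-\beta}L_b|Y_1-Y_2|\bigl(|X_1-X_2|+|Y_1-Y_2|\bigr),
\]
by {\bf (H$_y^6$)}. I would then apply Young's inequality with a free parameter $\theta>0$:
\[
2\veps^{-\beta}L_b|X_1-X_2||Y_1-Y_2|\leq\theta|X_1-X_2|^2+\tfrac{\veps^{-2\beta}L_b^2}{\theta}|Y_1-Y_2|^2.
\]
In the case $\beta=\alpha$, choosing $\theta\in(L_b^2/\eta,\lambda_1)$ (which is nonempty since $\lambda_1>L_b^2/\eta$) gives a coefficient $-\lambda_1+\theta<0$ in front of $|X_1-X_2|^2$ and, for sufficiently small $\veps_0$, a coefficient $\le -(\lambda_1-\theta)$ in front of $|Y_1-Y_2|^2$ because the $\veps^{-2\alpha}(L_b^2/\theta-\eta)$ term dominates $\lambda_2+2\veps^{-\alpha}L_b$; balancing $\theta$ then produces the stated rate $\lambda_1/2-L_b^2/(2\eta)$. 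In the case $\beta<\alpha$, the entire $b$-contribution is $O(\veps^{-\beta})$ and is absorbed into the $\veps^{-2\alpha}\eta$ dissipation for $\veps$ small, yielding rate $\lambda_1/2$. Taking expectation (the martingale parts vanish) and applying Gronwall's inequality yields \eqref{ASSS1}--\eqref{ASSS2}.

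For (b), fix $t\in\R$ and consider the pullback $s\mapsto V_{s,t}^\veps(0)$ for $s\le t$. Using the cocycle identity $V_{s_1,t}^\veps(0)=V_{s_2,t}^\veps(V_{s_1,s_2}^\veps(0))$ together with the contraction from (a) applied to $\xi_1=V_{s_1,s_2}^\veps(0)$ and $\xi_2=0$, I obtain
\[
\mbE|V_{s_1,t}^\veps(0)-V_{s_2,t}^\veps(0)|^2\leq \mbE|V_{s_1,s_2}^\veps(0)|^2\,{\rm e}^{-\rho(t-s_2)},
\]
where $\rho$ is the stated decay rate. By Lemma \ref{BouXY} the right-hand factor is uniformly bounded in $s_1<s_2$, so $\{V_{s,t}^\veps(0)\}_{s\to-\infty}$ is Cauchy in $L^2(\Omega;\R^{d_1+d_2})$. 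Defining $V_t^\veps:=\lim_{s\to-\infty}V_{s,t}^\veps(0)$ and passing to the limit in the integral equation (using the continuous dependence Lemma \ref{SICCD} and standard $L^2$-closure arguments) gives a solution of \eqref{sfmain} on all of $\R$. Uniqueness of bounded solutions follows by applying (a) to any two such solutions and letting $s\to-\infty$. Finally, the moment bound $\sup_{t\in\R}\mbE|V_t^\veps|^{2p}<\infty$ for $1\le p<\lambda_1/(2L_\sigma^2)+1$ follows from the estimate \eqref{BouX2p} in Lemma \ref{BouXY} and Fatou's lemma.

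The principal obstacle is the sharpness of the Young split in step (a) when $\beta=\alpha$: the $b$-term creates an $\veps^{-\alpha}$ cross-coupling of the slow into the fast component that competes directly with the $\veps^{-2\alpha}$ dissipation, and only under the precise assumption $\lambda_1>L_b^2/\eta$ is there an admissible choice of $\theta$ simultaneously making both coefficients negative. Once this balance is set, the rest of the argument is a routine pullback/Cauchy construction.
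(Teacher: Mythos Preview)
Your proposal is correct and follows essentially the same approach as the paper: It\^o's formula on $|V_1-V_2|^2$, the pointwise estimate via {\bf(H$_x^6$)}, {\bf(H$_y^3$)}(i), {\bf(H$_y^6$)} and a Young-inequality split on the $b$-cross term (the paper uses the specific choice $\theta=\tfrac{\lambda_1}{2}+\tfrac{L_b^2}{2\eta}$, which is exactly the midpoint of your admissible interval $(L_b^2/\eta,\lambda_1)$), then Gronwall for \eqref{ASSS1}--\eqref{ASSS2}, and finally the pullback limit $V_t^\veps=\lim_{n\to\infty}V_{-n,t}^\veps(0)$ combined with \eqref{BouX2p}--\eqref{BouY2p} for the $2p$-moment bound. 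The paper outsources the Cauchy/pullback step to \cite[Theorem~3.6]{CL2023}, whereas you spell it out, but the content is identical.
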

\begin{proof}
In view of It\^o's formula, we have
\begin{align}
&
\mbE|V_{s,t}^{\veps}(\xi_1)-V_{s,t}^{\veps}(\xi_2)|^2\nonumber\\
&
=\mbE|\xi_1-\xi_2|^2
+\mbE\int_s^t\Big(2\langle F_\veps(\tau,V_{s,\tau}^{\veps}(\xi_1))-F_\veps(\tau,V_{s,\tau}^{\veps}(\xi_2)),
V_{s,\tau}^{\veps}(\xi_1)-V_{s,\tau}^{\veps}(\xi_2)\rangle \label{BouSeq00}\\\nonumber
&\qquad
+|G_\veps(\tau,V_{s,\tau}^{\veps}(\xi_1))-G_\veps(\tau,V_{s,\tau}^{\veps}(\xi_2))|_{HS}^2\Big)\d\tau.
\end{align}
By {\bf(H$_x^3$)}, {\bf(H$_x^6$)} and {\bf(H$_y^6$)},
for all $t\in\R$ and $v_1:=(x_1,y_1)^T,v_2:=(x_2,y_2)^T\in\R^{d_1+d_2}$ we have
\begin{align}
&
2\langle F_{\var}(t,v_1)-F_{\var}(t,v_2),v_1-v_2\rangle
+|G_\veps(t,v_1)-G_\veps(t,v_2)|_{HS}^2\nonumber\\\nonumber
&
=2\langle f_\veps(t,x_1,y_1)-f_\veps(t,x_2,y_2),x_1-x_2\rangle
+|\sigma_\veps(t,x_1)-\sigma_\veps(t,x_2)|^2_{HS}\\
&\quad
+\veps^{-2\alpha}2\langle B(y_1)-B(y_2),y_1-y_2\rangle
+\veps^{-2\alpha}|g(y_1)-g(y_2)|_{HS}^2
\label{BouSeq01}\\\nonumber
&\quad
+\veps^{-\beta}2\langle b(x_1,y_1)-b(x_2,y_2),y_1-y_2\rangle\\\nonumber
&
\leq-\lambda_{1}|x_{1}-x_{2}|^{2}+\lambda_2|y_1-y_2|^2
-\var^{-2\alpha}\eta|y_{1}-y_{2}|^{2}\\\nonumber
&\quad
+\var^{-\beta}2L_b(|x_{1}-x_{2}|+|y_{1}-y_{2}|)|y_{1}-y_{2}|.
\end{align}

If $\beta=\alpha$ and $\lambda_1>L_b^2/\eta$, then it follows from \eqref{BouSeq01} and Young's inequality that there exists a constant $\veps_0>0$ such that for all $0<\veps\leq\veps_0$
\begin{align*}
&
2\langle F_{\var}(t,v_1)-F_{\var}(t,v_2),v_1-v_2\rangle
+|G_\veps(t,v_1)-G_\veps(t,v_2)|_{HS}^2\nonumber\\
&
\leq -\left(\frac{\lambda_1}{2}-\frac{L_b^2}{2\eta}\right)|x_1-x_2|^2\label{BouSeq02}\\\nonumber
&\quad
-\veps^{-2\alpha}
\left(\eta\left(1-\frac{2L_b^2}{\lambda_1\eta+L_b^2}\right)-\veps^{2\alpha}\lambda_2
-\veps^{2\alpha-\beta}2L_b\right)|y_1-y_2|^2\\\nonumber
&
\leq -\left(\frac{\lambda_1}{2}-\frac{L_b^2}{2\eta}\right)|v_1-v_2|^2,
\end{align*}
which by \eqref{BouSeq00} and Gronwall's inequality implies that
\begin{equation*}
\mbE|V_{s,t}^{\veps}(\xi_1)-V_{s,t}^{\veps}(\xi_2)|^2
\leq \mbE|\xi_1-\xi_2|^2{\rm exp}\left(-\left(\frac{\lambda_1}{2}-\frac{L_b^2}{2\eta}\right)(t-s)\right).
\end{equation*}

If $\beta<\alpha$ then based on \eqref{BouSeq01} and Young's inequality there exists $\veps_0>0$ such that for all $0<\veps\leq\veps_0$
\begin{align*}
&
2\langle F_{\var}(t,v_1)-F_{\var}(t,v_2),v_1-v_2\rangle
+|G_\veps(t,v_1)-G_\veps(t,v_2)|_{HS}^2\nonumber\\
&
\leq-\frac{\lambda_1}{2}|x_1-x_2|^2
-\left(\veps^{-2\alpha}\eta-\veps^{-2\beta}\frac{2L_b^2}{\lambda_1}-\veps^{-\beta}2L_b-\lambda_2\right)|y_1-y_2|^2
\leq -\frac{\lambda_1}{2}|v_1-v_2|^2,
\end{align*}
which by \eqref{BouSeq00} and Gronwall's inequality implies that
\begin{equation*}
\mbE|V_{s,t}^{\veps,1}(\xi_1)-V_{s,t}^{\veps,2}(\xi_2)|^2
\leq \mbE|\xi_1-\xi_2|^2{\rm exp}\left(-\frac{\lambda_1}{2}(t-s)\right).
\end{equation*}

For any $n\in\N$, let $V_{-n,t}^\veps(0):=(X_{-n,t}^\veps(0),Y_{-n,t}^\veps(0)),t\geq-n$,
be the solution to \eqref{sfmain}. Thanks to \eqref{ASSS1}, \eqref{ASSS2} and
the classical pullback absorbing method (see e.g. \cite[Theorem 3.6]{CL2023} for more details),
there exists a $\mcL^2(\Omega,\mbP;\R^{d_1+d_2})$-bounded solution
$V_t^\veps:=(X_t^\veps,Y_t^\veps), t\in\R$, of \eqref{sfmain},
which is the limit of $V_t^\veps(-n,0)$ in
$\mcL^{2}(\Omega,\mbP;\R^{d_1+d_2})$ as $n\rightarrow\infty$.
Moreover, by \eqref{BouX2p} and \eqref{BouY2p}, one sees that
$\sup_{t\in\R}\mbE|V_t^\veps|^{2p}\leq \infty$,
where $1\leq p<\frac{\lambda_1}{2L_\sigma^2}+1$.
\end{proof}

\begin{remark}\rm
Note that Proposition \ref{BouSth} implies that the bounded solution of \eqref{sfmain}
is globally asymptotically stable in the square-mean sense.
\end{remark}

Now we can show that the bounded solution inherits the quasi-periodic property of the coefficients,
and establish the second Bogolyubov theorem.
\begin{proof}[Proof of Theorem \ref{SAPth}]

Similar to the proof of Theorem 3.14 in \cite{CL2023}, we show that if $f$ and $g$ are
quasi-periodic then for any $0<\veps\leq\veps_0$, $V_t^\veps,t\in\R$,
is quasi-periodic in distribution.

Note that it can be verified that for all $x_1,x_2\in\R^{d_1}$
\[
2\langle \bar{f}(x_1)-\bar{f}(x_2),x_1-x_2\rangle+|\bar{\sigma}(x_1)-\bar{\sigma}(x_2)|
\leq-\lambda_1|x_1-x_2|^2,
\]
which implies that the global attractor $\bar{\msA}$ of $\bar{P}^*$ is a singleton set
and
\begin{equation}\label{SAPtheq02}
 \bar{\msA}=\{\msL(\bar{X}_t)\},
\end{equation}
where $\bar{X}_t,t\in\R$ is the stationary solution to \eqref{avemain}.

For any $\widetilde{\mbF}_\veps\in\mcH(\mbF_\veps)$ define
\[
\mfB_{\widetilde{\mbF}_\veps}:=
\left\{\msL(V^{\widetilde{\mbF}_\veps}_t):t\in\R\right\},
\]
where $V^{\widetilde{\mbF}_\veps}_t$ is the bounded solution to \eqref{sfmain} with $\widetilde{\mbF}_\veps$ replacing $\mbF_\veps$. Note that
\begin{equation*}
P_\veps^*(t,\sigma_{-t}\mbF_\veps,\mfB_{\sigma_{-t}\mbF_\veps})
=\mfB_{\mbF_\veps}
\end{equation*}
and
$\mfB_{\sigma_{-t}\mbF_\veps}\subset B$,
where is $B$ defined by \eqref{PGAeq0}. Then we have
\begin{align*}
\mfB_{\mbF_\veps}
\subset\cap_{\tau\geq0}\overline{\cup_{t\geq\tau}
P_\veps^*(t,\sigma_{-t}\mbF,\mfB_{\sigma_{-t}\mbF_\veps})}
\subset\cap_{\tau\geq0}\overline{\cup_{t\geq\tau}
P_\veps^*(t,\sigma_{-t}\mbF,B)}=\msA_{\mbF_\veps},
\end{align*}
which by \eqref{SAPtheq02} and Theorem \ref{GAPth} implies that
\[
\lim_{\veps\rightarrow0}\sup_{t\in\R}d_{BL}
\left(\msL\left(X^\veps_t\right),
\msL\left(\bar{X}_t\right)\right)=0.
\]
\end{proof}

\appendix

\section{}\label{EUSFSDE}

In this section, we will show the existence and uniqueness of solutions to \eqref{sfmain}.

\begin{lemma}\label{EUSFSDEth}
Suppose that {\bf (H$_y^1$)}, {\bf (H$_y^3$)}
and {\bf (H$_x^1$)}--{\bf (H$_x^3$)} hold. Then for any $(x,y)\in\R^{d_1+d_2}$ there exists
a unique solution $(X_t^\veps(x),Y_t^\veps(y))$ to \eqref{sfmain} provided
$0<\veps\leq \sqrt[2\alpha]{\eta'/K_5}\wedge 1$.
\end{lemma}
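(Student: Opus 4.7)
The plan is to combine a standard cutoff/localization procedure with a priori moment estimates tailored to exploit the dissipativity in both the slow and fast components. Global Lipschitz continuity fails for the drifts (\(f\) grows polynomially in \(y\) by {\bf(H$_x^2$)}, and \(B,b\) grow polynomially in their arguments by {\bf(H$_y^3$)}(iii)), so Picard iteration cannot be carried out directly; however, pathwise uniqueness will follow from local Lipschitz continuity, and non-explosion will follow from the smallness condition \(\veps^{2\alpha}K_5\le\eta'\).

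First I would introduce a smooth cutoff \(\chi_R:\R^{d_1+d_2}\to\R^{d_1+d_2}\) with \(\chi_R(v)=v\) on \(|v|\le R\) and \(\chi_R(v)=0\) on \(|v|\ge 2R\), and replace the drifts of \(f(t,\cdot,\cdot),B,b\) by their compositions with \(\chi_R\). By Remark \ref{Rem01}(ii) and {\bf(H$_y^3$)}(iii), the truncated drifts are globally Lipschitz and bounded; combined with the globally Lipschitz \(\sigma\) (from {\bf(H$_x^3$)}) and \(g\) (from {\bf(H$_y^3$)}(ii)), classical It\^o theory produces a unique global strong solution \((X^{R,\veps},Y^{R,\veps})\) of the truncated system. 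Define the localizing stopping times
\[
\tau_R^\veps:=\inf\bigl\{t\ge0:|X_t^{R,\veps}|^2+|Y_t^{R,\veps}|^2\ge R^2\bigr\}.
\]
Pathwise uniqueness for the truncated equations ensures that \((X_{\cdot\wedge\tau_R^\veps}^{R,\veps},Y_{\cdot\wedge\tau_R^\veps}^{R,\veps})\) is consistent in \(R\), so these processes define a candidate \((X^\veps,Y^\veps)\) on the stochastic interval \([0,\tau_\infty^\veps)\) with \(\tau_\infty^\veps:=\lim_{R\to\infty}\tau_R^\veps\).

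The crucial step is non-explosion, \(\tau_\infty^\veps=\infty\) almost surely, which I would obtain from It\^o's formula applied to \(\Phi(x,y):=|x|^2+|y|^2\). Combining {\bf(H$_x^1$)} and {\bf(H$_y^1$)} and collecting the polynomial-in-\(Y\) terms, the drift of \(\Phi(X_t^\veps,Y_t^\veps)\) is bounded above by
\begin{align*}
&K_4\bigl(1+|X_t^\veps|^2\bigr)+\bigl(K_5-\veps^{-2\alpha}\eta'\bigr)|Y_t^\veps|^\theta\\
&\qquad+\bigl(\veps^{-\beta}\widetilde{\eta}-\veps^{-2\alpha}\eta\bigr)|Y_t^\veps|^2+\bigl(\veps^{-2\alpha}+\veps^{-\beta}\bigr)K_1.
\end{align*}
The assumption \(\veps^{2\alpha}\le\eta'/K_5\) is exactly what makes the \(|Y^\veps|^\theta\) coefficient non-positive, leaving an affine bound in \(\Phi\) with an \(\veps\)-dependent but finite constant (the positive part of the \(|Y^\veps|^2\) coefficient, if any, is simply absorbed into \(C_\veps\,\Phi\)). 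Stopping at \(\tau_R^\veps\wedge T\), taking suprema, and applying the Burkholder--Davis--Gundy inequality (absorbing the martingale contribution into the supremum via Young's inequality) together with Gronwall's inequality yields \(\mbE\sup_{t\le T\wedge\tau_R^\veps}\Phi(X_t^\veps,Y_t^\veps)\le C_{T,\veps}(1+|x|^2+|y|^2)\) uniformly in \(R\). Chebyshev's inequality then gives \(\mbP(\tau_R^\veps\le T)\le C_{T,\veps}R^{-2}\to 0\), so \(\tau_\infty^\veps=\infty\) a.s., and \((X^\veps,Y^\veps)\) is a global strong solution of \eqref{sfmain}.

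Pathwise uniqueness on \([0,\infty)\) follows by applying It\^o's formula to \(|X_t^{(1)}-X_t^{(2)}|^2+|Y_t^{(1)}-Y_t^{(2)}|^2\) for two solutions with the same initial data, stopped at the common exit time of a ball of radius \(R\); Remark \ref{Rem01}(ii), {\bf(H$_x^3$)}, and {\bf(H$_y^3$)}(i)--(iii) produce a locally bounded random coefficient in a Gronwall-type inequality, and the a priori moment estimate above lets one send \(R\to\infty\) to conclude uniqueness on any finite interval. The main technical point is the cancellation of the \(|Y^\veps|^\theta\) term in the a priori estimate: that cancellation is precisely what the smallness condition \(\veps\le\sqrt[2\alpha]{\eta'/K_5}\) and the extra dissipativity of order \(|y|^\theta\) postulated in {\bf(H$_y^1$)} are designed to deliver.
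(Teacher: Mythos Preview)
Your proposal is correct and rests on the same analytic core as the paper's proof, namely the coercivity estimate in which the condition $\veps^{2\alpha}K_5\le\eta'$ forces the coefficient of $|y|^\theta$ to be non-positive; the paper performs exactly this computation. The packaging, however, is different. The paper rewrites \eqref{sfmain} as a single SDE $\d V_t^\veps=F_\veps(t,V_t^\veps)\,\d t+G_\veps(t,V_t^\veps)\,\d W_t$ and then simply checks the two hypotheses of the variational well-posedness theorem \cite[Theorem~3.1.1]{LR2015}: the coercivity bound $2\langle F_\veps(t,v),v\rangle+|G_\veps(t,v)|_{HS}^2\le C_\veps(1+|v|^2)$, which is your drift bound for $\Phi$, and a local monotonicity bound
\[
2\langle F_\veps(t,v_1)-F_\veps(t,v_2),v_1-v_2\rangle+|G_\veps(t,v_1)-G_\veps(t,v_2)|_{HS}^2\le C_\veps\bigl(1+|v_1|^{m}+|v_2|^{m}\bigr)|v_1-v_2|^2,
\]
assembled from {\bf(H$_x^2$)}, {\bf(H$_x^3$)} and {\bf(H$_y^3$)}. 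Your route---truncation, a priori bounds via It\^o plus Burkholder--Davis--Gundy and Gronwall, then a localized Gronwall argument for pathwise uniqueness---is an elementary, self-contained reproof of the finite-dimensional case of that theorem. The paper's version is shorter because the localization machinery is hidden in the cited result; yours has the advantage of not invoking an external framework, but the substantive idea (the $|y|^\theta$ cancellation) is identical.
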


\begin{proof}
Let $v:=(x,y)^T$,
$$F_{\varepsilon}(t,v):=\left(f_{\varepsilon}(t,x,y),\veps^{-2\alpha}
B(x,y)+\veps^{-\beta}b(x,y)\right)^{T},\quad
G_\veps(t,v):=\left(\sigma_\veps(t,x),\veps^{-\alpha} g(x,y)\right)^{T}$$
and $W:=\left(W^{1},W^{2}\right)^{T}$. Then
equation \eqref{sfmain} can be written as
\begin{equation*}\label{APeqsys}
\d V_{t}^{\varepsilon}=F_{\varepsilon}(t,V_{t}^{\varepsilon})\d t
+G_\veps(t,V_{t}^{\varepsilon})\d W_t.
\end{equation*}
Note that by {\bf (H$_x^1$)} and {\bf (H$_y^1$)}, we have
for any $t\in\R$ and $v:=(x,y)\in\R^{d_1+d_2}$
\begin{align*}
&
2\langle F_{\varepsilon}(t,v),v\rangle+|G_\veps(t,v)|^2_{HS}\\
&
=2\langle f_\veps(t,x,y),x\rangle
+|\sigma_\veps(t,x)|^2_{HS}
+\veps^{-2\alpha}\left(2\langle B(x,y),y\rangle+|g(x,y)|_{HS}^2\right)
+\veps^{-\beta}2\langle b(x,y),y\rangle\\
&
\leq K_4|x|^2+(-\veps^{-2\alpha}\eta+\veps^{-\beta}\widetilde{\eta})|y|^2
+(K_5-\veps^{-2\alpha}\eta')|y|^{\theta}+K_4+(\veps^{-2\alpha}+\veps^{-\beta})K_1.
\end{align*}
If $\theta=2$ or $K_5=0$,
then for all $0<\veps\leq1$
\begin{align*}
2\langle F_{\varepsilon}(t,v),v\rangle+|G_\veps(t,v)|^2_{HS}
\leq C_{K_4,\veps}|v|^2+C_{\veps,K_1,K_4},~\forall (t,v)\in\R^{1+d_1+d_2}.
\end{align*}
Otherwise, for all $0<\veps\leq \sqrt[2\alpha]{\eta'/K_5}$
\begin{align*}
2\langle F_{\varepsilon}(t,v),v\rangle+|G_\veps(t,v)|^2_{HS}
\leq K_4|v|^2+C_{\veps,K_1,K_4},~\forall (t,v)\in\R^{1+d_1+d_2}.
\end{align*}

By {\bf (H$_x^2$)}, {\bf (H$_x^3$)} and {\bf (H$_y^3$)}, one sees that
\begin{align*}
&
2\langle F_{\var}(t,v_1)-F_{\var}(t,v_2),v_1-v_2)\rangle
+|G_\veps(t,v_1)-G_\veps(t,v_2)|_{HS}^2\\
&
=2\langle f_\veps(t,x_1,y_1)-f_\veps(t,x_2,y_2),x_1-x_2\rangle
+|\sigma_\veps(t,x_1)-\sigma_\veps(t,x_2)|^2_{HS}\\
&\quad
+\veps^{-2\alpha}2\langle B(x_1,y_1)-B(x_2,y_2),y_1-y_2\rangle
+\veps^{-2\alpha}2|g(x_1,y_1)-g(x_2,y_2)|_{HS}^2\\
&\quad
+\veps^{-\beta}2\langle b(x_1,y_1)-b(x_2,y_2),y_1-y_2\rangle\\
&
\leq 2|f_\veps(t,x_1,y_1)-f_\veps(t,x_2,y_2)| |x_1-x_2|
+L_\sigma^2|x_1-x_2|^2\\
&\quad
+\veps^{-2\alpha}2\langle B(x_1,y_1)-B(x_1,y_2),y_1-y_2\rangle
+\veps^{-2\alpha}2\left(L_g|x_1-x_2|+L_g|y_1-y_2|\right)^2\\
&\quad
+\veps^{-2\alpha}2|B(x_1,y_2)-B(x_2,y_2)| |y_1-y_2|
+\veps^{-\beta}2|b(x_1,y_1)-b(x_2,y_2)| |y_1-y_2|\\
&
\leq 2K_7\left(1+|x_1|^{\theta_1}+|x_2|^{\theta_1}+|y_1|^{\theta_2}+|y_2|^{\theta_2}\right)
  \left(|x_1-x_2|+|y_1-y_2|\right)|x_1-x_2|+L_\sigma^2|x_1-x_2|^2\\
&\quad
-\veps^{-2\alpha}\eta|y_1-y_2|^2-\veps^{-2\alpha}\eta'|y_1-y_2|^{\theta}
+\veps^{-2\alpha}4L_g^2\left(|x_1-x_2|^2+|y_1-y_2|^2\right)\\
&\quad
+2\veps^{-2\alpha}K_3\left(1+|y_2|^{\kappa_2}\right)|x_1-x_2||y_1-y_2|\\
&\quad
+2\veps^{-\beta}K_4\left(1+|x_1|^{\kappa_1}+|x_2|^{\kappa_1}+|y_1|^{\kappa_2}+|y_2|^{\kappa_2}\right)
\left(|x_1-x_2|+|y_1-y_2|\right)|y_1-y_2|\\
&
\leq C_{K_4,K_7,L_\sigma,L_g,\veps}\left(1+|x_1|^{2\theta_1\vee2\kappa_1}+|x_2|^{2\theta_1\vee2\kappa_1}
+|y_1|^{2\theta_2\vee2\kappa_2}+|y_2|^{2\theta_2\vee2\kappa_2}\right)|x_1-x_2|^2\\
&\quad
+C_{K_4,K_7,L_\sigma,L_g,\veps,\widetilde{\eta}}\left(1+|x_1|^{2\theta_2\vee2\kappa_2}
+|x_2|^{2\theta_2\vee2\kappa_2}
+|y_1|^{2\theta_2\vee2\kappa_2}+|y_2|^{2\theta_2\vee2\kappa_2}\right)|y_1-y_2|^2
\end{align*}
for any $t\in\R$ and $v_1:=(x_1,y_1),v_2:=(x_2,y_2)\in\R^{d_1+d_2}$.
Therefore, it follows from \cite[Theorem 3.1.1]{LR2015} that for any $(x,y)\in\R^{d_1+d_2}$
there exists a unique solution $(X_t^\veps(x),Y_t^\veps(y))$ to \eqref{sfmain}
for all $0<\veps\leq \sqrt[2\alpha]{\eta'/K_5}\wedge1$.
\end{proof}

\section*{Acknowledgements}
The first author would like to acknowledge the
warm hospitality of Bielefeld University.
The second author was supported by NSFC Grants
11871132, 11925102, and
Dalian High-level Talent Innovation Project (Grant 2020RD09).
The third author was supported by the Deutsche Forschungsgemeinschaft
(DFG, German Research Foundation) - SFB 1283/2 2021 - 317210226.

\end{document}